\def\bb#1\eb{\textcolor{blue}{#1}} 
\def\br#1\er{\textcolor{red}{#1}} %
\def\bm#1\em{\textcolor{purple}{#1}} %
\newtheorem{theorem}{Theorem}[section]
\newtheorem{proposition}[theorem]{Proposition}
\newtheorem{lemma}[theorem]{Lemma}
\theoremstyle{definition}
\newtheorem{definition}[theorem]{Definition}
\theoremstyle{remark}
\newtheorem{remark}[theorem]{Remark}
\numberwithin{equation}{section}
\newcommand{\F}{\ensuremath{\mathcal{F}}}
\newcommand{\rank}{\ensuremath{\mathrm{rank}\ }}
\newcommand{\RR}{\mathds R}
\newcommand{\N}{\mathds N}
\newcommand{\selfadjointW}{\ensuremath{\mathtt{W} }}
\newcommand{\selfadjointV}{\ensuremath{\mathtt{V}} }
\newcommand{\WilkingH}{\ensuremath{\mathcal{H} } }
\newcommand{\WilkingV}{\ensuremath{\mathcal{V} } }
\newcommand{\h}{\mathfrak{h}}
\newcommand{\vp}{\mathfrak{v}}
\newcommand{\Dh}{D^{\mathfrak{h}}}
\begin{document}


\title[Equifocal  Finsler submanifolds and analytic maps ]{On equifocal  Finsler submanifolds and analytic maps }

\author[M.M. Alexandrino]{Marcos M. Alexandrino}

\author[B. Alves]{Benigno Alves}

\author[M. A. Javaloyes]{Miguel Angel Javaloyes }


\address{Benigno O. Alves \hfill\break\indent 
	Instituto de Matem\'{a}tica e Estat\'{\i}stica\\
	Universidade Federal da Bahia, \hfill\break\indent
	Rua Bar\~ao de Jeremoabo ,40170-115 Salvador, Bahia, Brazil}
\email{benignoalves@ufba.br;gguialves@hotmail.com}

\address{Marcos M. Alexandrino \hfill\break\indent 
Instituto de Matem\'{a}tica e Estat\'{\i}stica\\
Universidade de S\~{a}o Paulo, \hfill\break\indent
 Rua do Mat\~{a}o 1010,05508 090 S\~{a}o Paulo, Brazil}
\email{marcosmalex@yahoo.de, malex@ime.usp.br}

\address{Miguel \'Angel Javaloyes \hfill\break\indent
Departamento de Matem\'aticas, Universidad de Murcia, \hfill\break\indent Campus de Espinardo,
\hfill\break\indent 30100 Espinardo, Murcia, Spain} 
\email{majava@um.es}

\thanks{The first and second authors were supported by  Funda\c{c}\~{a}o de Amparo a Pesquisa do Estado de S\~{a}o Paulo-FAPESP (Tematicos: 2016/23746-6). The second author was supported by CNPq (PhD fellowship) and partially
supported by  PDSE-Capes (PhD sandwich program). This work is a result of the activity developed within the framework of the Programme in Support of Excellence Groups of the Regi\'on de Murcia, Spain, by Fundaci\'on S\'eneca, Science and Technology Agency of the Regi\'on de Murcia. The third author was partially supported by MICINN/FEDER project reference
PGC2018-097046-B-I00 and Fundaci\'on S\'eneca  (Regi\' on de Murcia) project reference 19901/GERM/15, Spain}


\subjclass[2000]{Primary 53C12, Secondary 58B20}



\keywords{Finsler foliations, Finsler submersion}

\begin{abstract}
  A relevant property of equifocal submanifolds is that their parallel sets are still immersed submanifolds, which makes them a 
 natural generalization of the so-called isoparametric submanifolds. In this paper, we prove that the regular fibers of an analytic map 
 $\pi:M^{m+k}\to B^{k}$ are equifocal whenever  $M^{m+k}$ is endowed with a complete Finsler metric and there is a restriction of
 $\pi$ which is a Finsler submersion for a certain Finsler metric on the image. In addition, we prove that when the fibers provide a singular
 foliation on $M^{m+k}$, then this foliation is Finsler.

\end{abstract}


\maketitle

\section{Introduction}

Roughly speaking, as the name  might  suggest, 
an equifocal submanifold $L$  in a Riemannian manifold $M$
is  one whose \emph{``parallel sets"} are always (immersed) submanifolds, 
even when they contain focal points of $L$ (see the formal definition below).

From Somigliana's article on Geometric Optics in $1919$,
and Segre and Cartan's work in the $1930s$ 
to the present day, this natural concept
received different names and was treated in
different ways, but it has been playing an 
important role in the theory of submanifolds 
and problems with symmetries; for surveys on the relation between 
the theory of isoparametric submanifolds and isometric actions 
see Thorbergsson's surveys \cite{Th,ThSurvey2,ThSurvey3}
and for books on these topics   
see e.g., \cite{AlexBettiol,BerndtConsoleOlmos,PalaisTerng}.

Isoparametric submanifolds 
and regular orbits of isometric actions
are examples of equifocal submanifolds. Both kinds of submanifolds 
are leaves of the 
so-called singular Riemannian foliations (SRF for short), 
i.e., singular foliations whose leaves are locally equidistant, see e.g., \cite{AlexRafaelDirk}. 
On the other hand, the regular leaves of an SRF are equifocal (see \cite{AlexToeben2}).
Therefore, so far, equifocality and SRF's are closely related.

Also over the past decades, important examples have been
presented as a pre-image of  regular values of
analytic maps, see \cite{FerusKarcherMunzner} and \cite{RadeschiClifford}.
And there are good reasons for that. For example, as proved in \cite{LytchakRadeschi}, 
the leaves of SRF's with closed leaves in Euclidean spaces
are always pre-images of polynomial maps.

Several problems related to
symmetries and wavefronts presented in Riemannian
geometry are also natural problems in Finsler geometry.
For example,  the Finsler distance
(a particular case of transnormal functions)
has been used to model forest wildfires
and Huygens' principle, see \cite{Markvorsen}. This is one of the several motivations for
a recent development of Finsler's isoparametric theory,
and again isoparametric and transnormal analytic functions
naturally appear; see \cite{AleAlvesDehkordi,DongPeilongQun,He-Yin-Shen,HeQunSongTingYiBing,HeQuHe,HeQunPeilongDongSongtingYin,MingXu}.
For other applications see  \cite{Eikonal} and \cite{HengamegSaa}.

Very recently we  have  laid  in  \cite{Alexandrino-Alves-Javaloyes}  the foundations of the new
theory of \emph{singular Finsler foliations} (SFF), i.e., a singular foliation
where the leaves are locally equidistant 
(but the distance from
a plaque or leaf $L_a$ to $L_b$ does not need to be equal
to the distance  from a plaque or leaf $L_b$ to $L_a$), see Lemma \ref{lemma-equidistant}.
The partition of a Finsler manifold into 
orbits of a Finsler action is a natural example of SFF.
We have also presented non-homogeneous examples using analytic maps \cite[Example 2.13]{Alexandrino-Alves-Javaloyes}. 
Among other results, we proved the equifocality of the regular  leaves
of an  SFF on a Randers manifold, where the wind  is an infinitesimal homothety. 
Nevertheless, to prove the equifocality of the regular leaves of an SFF  in the general case
becomes a challenging problem, presenting new aspects
that did not appear in the analogous problem in Riemannian geometry.

For all these reasons, it is natural to ask if we can
prove the equifocality of  leaves of  SFF's  described
by analytic maps. In this article, we tackle this issue proving it  under natural conditions. During our investigation
we generalize some Riemannian techniques (like  Wilking's distribution)
and  obtain some results that,  as far as we know, do not appear in the literature even in the  Riemannian case.

Throughout this article, $(M,F)$ will always be a complete analytical Finsler  manifold.

In order to formalize the concept of ``parallel sets'' and equifocality,
let us start by recalling the definition of equifocal hypersurfaces.

Let  $L$ be an  (oriented) hypersurface with  normal unit vector field $\xi$  (in the non-reversible Finsler case, there could be two independent normal unit vector fields to $L$).  
We  define \emph{the endpoint map} $\eta_{\xi}: L\to M$
as $\eta_{\xi}^{r}(p)=\gamma_{\xi_p}(r)$, where $\gamma_{\xi_p}$ is the 
unique geodesic with $\dot\gamma_{\xi_p}(0)=\xi_p$.
In this case  the parallel sets are $L_r=\eta_{\xi}^{r}(L)$. 
\emph{The hypersurface $L$ is said  equifocal  if $d \eta_{\xi}^{r}$ has constant
	rank for all $r.$ } From  the constant rank theorem,  it is not difficult to check that if $L$ is compact, then
the parallel sets $L_r$ are immersed submanifolds (with possible intersections). 

If the codimension of $L$ is greater than 1,  we need to clarify which normal unit vector field $\xi$ along $L$ 
we consider as there are many different choices.
We will follow the same approach as in the Riemannian case.

When $L$ is a regular leaf of an SFF denoted by $\F$
(i.e., $L$ has maximal dimension),
there exists a neighborhood $U$ (in $M$) of the point $p\in L$ and a 
Finsler submersion $\rho: U\to S$ so that $\F_U=\F\cap U$ are the
pre-images of the map $\rho$. 
In this case, $\xi$ can be chosen  as a  projectable  
normal  vector field along $L\cap U$  
with respect to $\rho$, i.e.,  an $\F$-\emph{local basic vector field}.
Now, we say that \emph{$L$ is an equifocal submanifold if, for each local $\F$-basic
	unit vector field $\xi$ along   $L$, the differential map
	$d \eta_{\xi}^{r}$ has constant rank for all $r$}.

Finally when $\pi: M\to B$ is an analytic map, $c$ is a regular value and  $L=\pi^{-1}(c)$, 
$\xi$ can be chosen  as a basic vector field along $L$, i.e.,  \emph{$\pi$-basic}.  Once we want to consider
``parallel sets'' of codimension bigger than one, it is  natural to expect that at least  near $L$ the fibers are equidistant.
 Therefore, in this case  \emph{  we will demand that there exists a saturated neighborhood $\widetilde U$ of $L$ where $\pi$ is regular
and the fibers of $\pi$ restricted to $\widetilde U$ are the leaves of a regular Finsler foliation.
In other words, the restriction $\pi_{\widetilde U}:\widetilde U\to\pi(\widetilde U)\subset B$ is a Finsler submersion
for some Finsler metric on $\pi(\widetilde U)$.  
	With this assumption,  we will say that $L=\pi^{-1}(c)$ is an equifocal submanifold if
	for each $\pi$-basic  unit vector field $\xi$ along $L$, the differential map
	$d \eta_{\xi}^{r}$ has constant rank for all $r$.} 
	 Observe that in this definition, the partition $\F_\pi=\{\pi^{-1}(c) \}_{c\in B}$ of $M$ by the fibers of $\pi$ is not required  to be a singular foliation.

\begin{theorem}
	\label{SubAnal}
	Let $(M,F)$ be an analytic connected complete Finsler manifold and 
	$\pi:M^{m+k}\rightarrow B^{k}$  an analytic map between  analytic manifolds. Assume that
	the pre-image $\pi^{-1}(d)$ is always a connected set and that there exists  an
	open subset $U$ where $\pi$ is regular and such that $\F_\pi|_U$ is a Finsler foliation. 
	We have that
	\begin{itemize}
		\item[(a)] if $q\in \pi^{-1}(c)$ is a regular point (i.e., $d \pi_q$ is surjetive), then all points of $\pi^{-1}(c)$ are regular, i.e.,
		$c$ is a regular value,  and  the restriction of $\F_\pi$ to the set of regular points of $\pi$ is a Finsler foliation. 
		\item[(b)] Each regular level set $\pi^{-1}(c)$ is an equifocal submanifold. In addition, the image of the
		endpoint map $\eta_{\xi}^{r}:\pi^{-1}(c)\to M$ (for $r$ and $\xi$ fixed) is contained in a level set. 
		\item[(c)] If  $\mathcal F_{\pi}$ 
		is a smooth singular foliation, then $\mathcal F_{\pi}$ is a singular Finsler foliation. 
	\end{itemize}
	
\end{theorem}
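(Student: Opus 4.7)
The plan is to treat the three parts in sequence, using the analyticity of $\pi$ and of $F$ as the main lever to promote the local Finsler-submersion information on $U$ to global information on the regular set of $\pi$.

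For part (a), I would first observe that the rank of $d\pi$ is lower semicontinuous, so the regular set $R$ of $\pi$ is open; since $U \subset R$ is nonempty and $\pi$ is analytic, $R$ is also dense. Using the Finsler-submersion structure on $U$, one extracts the analytic Finsler metric on $\pi(U)$ and extends it by analytic continuation to an open subset of $B$ containing $\pi(R)$; the Finsler-submersion identity on horizontal vectors is an analytic equation, so it persists on all of $R$. The delicate point is to show that every connected fiber $\pi^{-1}(c)$ meeting $R$ is contained in $R$. For this I would use the local Finsler foliation near a regular point $q \in \pi^{-1}(c)$ to produce, via horizontal geodesic variations, analytic local diffeomorphisms between nearby fibers; a bootstrap along the connected fiber, combining analytic continuation of these holonomy maps with the connectedness of $\pi^{-1}(c)$, then forces the whole fiber into $R$.

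For part (b), since $\xi$ is $\pi$-basic, $d\pi \circ \xi$ descends to a vector field on $B$ near $c$, and the Finsler-submersion property established in (a) ensures that every horizontal geodesic $\gamma_{\xi_p}$ with $p \in L = \pi^{-1}(c)$ projects under $\pi$ to the same Finsler geodesic $\sigma$ on $B$; consequently $\eta_\xi^r(L) \subset \pi^{-1}(\sigma(r))$. For the constant-rank assertion, I would analyze $d\eta_\xi^r|_p$ through horizontal Jacobi fields along $\gamma_{\xi_p}$ arising from variations of $\xi$ by basic vector fields, and argue via a Wilking-type horizontal distribution on $L$ (extracted on $U$ and extended analytically) that the kernel of $d\eta_\xi^r|_p$ has constant dimension, giving the required constant rank. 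For part (c), once $\F_\pi$ is assumed to be a smooth singular foliation, I would deduce the local equidistance characterizing SFFs (Lemma \ref{lemma-equidistant}) by approximating a singular leaf $L^*$ by regular ones: part (b) yields explicit local equidistance between regular leaves, and a limiting argument exploiting analyticity (to prevent pathological accumulation of focal points) extends this to the singular stratum.

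The main obstacle is the propagation step in part (a): lower semicontinuity of rank alone does not suffice, so one must combine analytic continuation of the Finsler-submersion identity with a global holonomy/parallel-transport argument along the connected fiber. Concretely, one needs Wilking-type horizontal distributions on the fiber whose integrability and analyticity allow one to move a neighborhood of $q$ throughout $\pi^{-1}(c)$; this is where the analytic hypothesis plays an essential, non-cosmetic role, and it is the central difficulty that the proof must resolve.
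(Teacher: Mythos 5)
Your high-level strategy---propagate the Finsler-submersion structure from $U$ to all regular points by analytic continuation, use Wilking distributions along horizontal geodesics for equifocality, and pass from regular to singular leaves by a limiting argument---matches the paper's overall architecture. However, several steps where your proposal is vague are precisely where the paper does nontrivial work, and as written the proposal has genuine gaps rather than merely compressed exposition.

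The most serious gap is in part (b), in the constant-rank assertion for $\eta_\xi^r$ when $r$ lands on a singular value $d$. You say you would ``argue via a Wilking-type horizontal distribution on $L$ ... that the kernel of $d\eta_\xi^r|_p$ has constant dimension,'' but analyticity of the distribution alone does not force the kernel dimension to be locally constant---analytic maps routinely have jumping rank. The paper's actual mechanism (Lemma~\ref{lemma-equifocality}) is a two-sided squeeze: first, a Morse--Sturm spacing argument plus the transversal Jacobi equation show that $L_{\tilde p}$-focal points along the horizontal geodesics near $r$ are of \emph{tangential type}, so the multiplicity of the focal point at $\tilde r$ equals $\dim\ker d\eta^{\tilde r}_{\tilde\xi}$; second, lower semicontinuity of the Morse index gives $m(\gamma_{\tilde\xi_{\tilde x}})\ge m(\gamma_{\tilde\xi_{\tilde p}})$ nearby, while upper semicontinuity of the kernel dimension gives $\dim\ker d\eta^{\tilde r}_{\tilde\xi}(\tilde x)\le\dim\ker d\eta^{\tilde r}_{\tilde\xi}(\tilde p)$. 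Together these force equality. Without this pincer the constant-rank claim is unsupported, and this is the heart of the equifocality statement.

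There are two further gaps. In part (a), your ``bootstrap along the connected fiber'' needs a concrete reason why boundary points of the regular part of $L_p$ are regular: the paper's Lemma~\ref{firstapproach} shows $F\circ\xi$ is constant on the regular component by analyticity, which makes the basic lifts $\xi_{q_n}$ precompact in $TM$ as $q_n$ approaches the boundary, so they converge (up to a subsequence) to a vector projecting onto any prescribed $\tilde\xi$---this is what produces regularity at the boundary. Your proposal does not supply this compactness mechanism, and without it analytic continuation alone does not prevent $F\circ\xi$ from blowing up or $d\pi$ from degenerating at the boundary. Also, the second half of (a)---that $\F_\pi$ restricted to the regular set is actually a Finsler foliation, not merely a foliation by level sets---requires a global equidistance argument (the paper's Lemma~\ref{pontoregular}), built from $D^\h$-parallel transport along two horizontal geodesics with the same projection and Lemma~\ref{firstapproach}; this step is not addressed in your outline. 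Finally, in part (c), your ``limiting argument exploiting analyticity to prevent pathological accumulation of focal points'' must be supplemented by surjectivity of $\eta_\xi^r$ onto singular fibers (proved in the paper via an openness argument using the footpoint projection and a closedness argument via completeness), which is what allows one to transport minimizing geodesics from regular leaves to arbitrary points of a singular leaf and conclude equidistance.
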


\begin{remark}
	Item (a)  shows how the hypothesis influences the singular fibers. 
	In fact, consider $f:\mathbb{R}^{2}\to\mathbb{R}$ defined as $f(x_1,x_2)=x_1x_2$. Note that
	$f^{-1}(0)$ is the stratified set $\{x_2=0\}\cup\{x_1=0\}$. Also note that each point  $ p\in f^{-1}(0)$ different
	from  $(0,0)$  is a regular point, i.e., $ \nabla f(p)\neq (0,0)$. In other words, 
	in this very natural and simple example item (a) is not fulfilled.
\end{remark}

\begin{remark}
	As far as we know, the theorem is  new even in the Riemannian case.
	In fact, in \cite{Alexandrino-IRSS}  the first author dealt with transnormal analytic maps,
	assuming also that the normal bundle of the fibers was an integrable distribution
	(on the set of regular points).
\end{remark}

There are a few interesting open questions. The first one 
is under which hypotheses can one ensure that the singular set is not
just a stratification, but a submanifold. 
This problem was solved  in \cite{AleAlvesDehkordi}, 
in the case where the map was an analytic  function $f:M\to\mathbb{R}.$
The second question is even when  the singular sets are submanifolds, and 
the fibers are equidistant, under which conditions  $\mathcal F_{\pi}=\{\pi^{-1}(c) \}_{c\in B}$  will be a smooth
singular foliation, i.e.,   for each $p\in  \pi^{-1}(d)$ and $v\in T_p(\pi^{-1}(d))$,   there 
is a smooth vector field $X$ tangent to the fibers with $X_p=v$.

This paper is organized as follows. In \S \ref{section-preliminaries}, we briefly review a few facts
on Finsler geometry and Finsler submersions. In \S \ref{section-wilkingsdistribution}, we review the construction
of Wilkings' distribution in the Finsler case. In \S \ref{section-singular-analytic-submersion},
we present the proof of Theorem \ref{SubAnal}. 
In \S \ref{apendice},  we discuss the definition of the curvature tensor (related to 
 the definition of Jacobi tensor presented in \S \ref{section-preliminaries})
 and some proofs of results (presented 
in \S \ref{section-wilkingsdistribution}) on  Jacobi fields. In this way, we hope to make the paper accessible also to readers without previous training in Finsler geometry, or who are more interested in understanding the Riemannian case of the theorem. 

\

\emph{Acknowledgements}  We are very grateful to Prof.~Daniel Tausk (IME-USP, S\~ao Paulo, Brasil) for explaining some aspects of the theory of Morse-Sturm systems to us.


\section{Preliminaries}
\label{section-preliminaries}

In this section we fix some notations and briefly review a few facts about Finsler Geometry
that will be used in this paper. For more details see \cite{BCS,Sh01,Zhongmin-Shen}. 

\subsection{Finsler metrics }
\label{section-finsler-metrics}

Let $M$ be a manifold. %
We say that a continuous function $F:TM\to [0,+\infty)$ is a \emph{Finsler metric} if
\begin{enumerate}
\item $F$ is smooth on $TM \setminus \bf 0$,
\item $F$ is positive homogeneous of degree $1$, that is, $F(\lambda v)=\lambda F(v)$ for
every $v\in TM$ and $\lambda>0$,
\item for every $p\in M$ and $v\in T_pM \setminus \{0\}$, the \emph{fundamental tensor} of $F$ defined as
$$ g_{v}(u,w)=\frac{1}{2}\frac{\partial^{2}}{\partial t\partial  s} F^{2}(v+tu+sw)|_{t=s=0}$$
for any $u,w\in T_{p}M$ is a nondegenerate positive-definite bilinear  symmetric form. 
\end{enumerate} 
In particular,  if $M$ is a vector space $V$, and $F:V\to \RR$ is a function smooth on $V\setminus \{\bf 0\}$  and satisfying the properties  $(2)$ and $(3)$ above, 
then $(V,F)$ is called a \emph{Minkowski space}.

Recall that the \emph{Cartan tensor} associated with the Finsler metric $F$ is defined as
$$C_{v}(w_{1},w_{2},w_{3}):= \frac{1}{4} \frac{\partial^{3}}{\partial s_{3}\partial s_{2}\partial s_{1}} F^{2}(v+ \sum_{i=1}^{3} s_{i}w_{i})|_{s_{1}=s_{2}=s_{3}=0}$$
for every $p\in M$,  $v\in T_pM\setminus \bf 0$,  and  $w_{1}, w_{2}, w_{3}\in T_{p}M.$


 \subsection{The Chern connection and the induced covariant derivative} 
In  \S \ref{apendice1}, we will 
discuss the concept of anisotropic connection,  which means that the  Christoffel symbols are functions on the 
tangent bundle. 
Here we just  briefly review the concept of 
 Chern connection associated with a Finsler metric  as a family of affine connections $\nabla^V$, with  $V$ a vector field without 
 singularites on an open subset $\Omega$ (see \cite[Eq. (7.20) and (7.21)]{Sh01} and also \cite{J13}):
\begin{enumerate}
\item $\nabla^{V}$ is \emph{torsion-free}, namely,
$$ \nabla^{V}_{X}Y-\nabla^{V}_{Y}X=[X,Y]$$
for every vector field $X$ and $Y$ on $\Omega$,
\item $\nabla^{V}$ is \emph{almost g-compatible}, namely,
$$X\cdot g_{V}(Y,Z)=g_{V}(\nabla^{V}_{X}Y,Z)+g_{V}(Y,\nabla^{V}_{X}Z)+2C_{V}(\nabla^{V}_{X}V,Y,Z), $$
where $X$, $Y$, and $Z$ are vector fields on open set $\Omega$ and $g_V$ and $C_V$ are the tensors on $\Omega$ such that $(g_{V})_p=g_{V_p}$ and $(C_{V})_p=C_{V_p}$.
\end{enumerate}
 It can be checked that the Christoffel symbols of $\nabla^V$ only depend on $V_p$ at every $p\in M$, and not on the particular extension of $V$. Therefore, the Chern connection is an anisotropic connection. 
 Moreover, it   is positively homogeneous of degree zero, namely, $\nabla^{\lambda v}=\nabla^v$ for all $v\in A$ and $\lambda>0$. 
For an explicit expression of the Christoffel symbols of the Chern connection in terms of the coefficients of the fundamental and the Cartan tensors 
see \cite[Eq. (2.4.9)]{BCS}. 
 The Chern connection provides an (anisotropic) curvature tensor $R$, which,  for every $p\in M$ and $v\in T_pM\setminus \{\bf 0\}$, determines a linear map 
$R_v:T_pM\times T_pM\times T_pM\rightarrow T_pM$. The Chern curvature tensor can be defined using the anisotropic calculus as in Eq. \eqref{curten}, or in coordinates, with the help of the non-linear connection (see \cite[Eq. (3.3.2)]{Bao-Robles-Shen}). 
Given a smooth curve $\gamma: I\subset \mathbb{R}\to M$ and a  smooth vector field $W\in \mathfrak{X}(\gamma)$ along $\gamma$ without singularites, where $\mathfrak{X}(\gamma)$ denotes the smooth sections of the pullback bundle $\gamma^{*}(TM)$ over $I$, the Chern connection induces a {\em covariant derivative}
$D^{W}_\gamma$ along $\gamma$, such that $(D^{W}_\gamma X) (t)=\nabla^{W(t)}_{\dot\gamma(t)}X$, with the identification $X(t)=X_{\gamma(t)}$, when $X\in {\mathfrak X}(M)$.
If the image of the curve is contained in a chart $(\Omega,\varphi)$ 
(using the Christoffel symbols introduced in Eq. \eqref{chrissymbols}) 
the covariant derivative is expressed as
\[(D^{W}_\gamma X) (t)=\sum_{k=1}^n(\dot X^k(t)+\sum_{i,j=1}^nX^i(t)\dot\gamma^j(t)\Gamma^k_{\,\, ij}(W(t)))\partial_k,\]
where $X,W\in {\mathfrak X}(\gamma)$ and $X^l(t)$ and $\dot\gamma^l(t)$ are the coordinates of $X(t)$ and $\dot\gamma(t)$, respectively, for $t\in I$.
  When 
$\dot\gamma(t)\not=0$ for all $t\in I$, we can take as a reference vector $W=\dot\gamma$. In such a case, we will use the notation $X':=D^{\dot\gamma}_\gamma X$ whenever there is no possible confusion about $\gamma$.

%
\subsection{Geodesics and Jacobi fields}
We will say that a smooth curve $\gamma:I\subset \RR\rightarrow M$ is a geodesic of $(M,F)$ if it is an auto-parallel curve of the covariant derivative induced by the Chern connection, namely, $D^{\dot\gamma}_\gamma \dot\gamma=0$. 
Given a vector $v\in TM\setminus \bf 0$, there is a unique geodesic $\gamma_v$ such that $\dot\gamma_v(0)=v$. 

 When we consider a geodesic variation of $\gamma$, it turns out that the variational vector field $J$ is characterized by solving the differential equation
\begin{equation}\label{jacobi}
J'' (t)+ R_{\dot{\gamma}(t)}(J(t))=0, 
\end{equation}
where, for every $p\in M$ and $v\in T_pM\setminus \{\bf 0\}$, we define the operator $R_v:T_pM\rightarrow T_pM$ as $R_v(w)=R_v(w,v)v$ for
every $w\in T_pM$ (see for example \cite[Prop. 2.11]{J20}). Given a geodesic $\gamma$, the operator $R_{\dot\gamma}$,  defined for vector fields along $\gamma$, and the solutions of Eq. \eqref{jacobi} are known, respectively, as the {\em Jacobi operator of $\gamma$}, and {\em the Jacobi fields of $\gamma$. }

 When we consider a submanifold $L$, the minimizers from $L$ are orthogonal geodesics (see \cite{AlJav19}), and variations
of orthogonal geodesics will be given by $L$-Jacobi fields, which will be introduced below. First let us define orthogonal vectors. Given a submanifold $L\subset M$, we say that a vector $v\in T_pM$, with $p\in L$, is orthogonal to $L$ if $g_v(v,u)=0$ for all $u\in T_pL$. The subset of orthogonal vectors to $L$ at $p\in L$, denoted by $\nu_pL$, could  be non-linear, but it is a smooth cone (see \cite[Lemma 3.3]{JavSoa15}). We say that a geodesic $\gamma:I\rightarrow M$ is orthogonal to $L$ at $t_0\in I$ if $\dot\gamma(t_0)$ is orthogonal to $L$.
\begin{definition}
\label{definition-eq-L-Jacobi}
Let $L$ be a submanifold of a Finsler manifold $(M,F)$ and $\gamma:[a,b)\rightarrow M$ a geodesic orthogonal to $L$ at $p=\gamma(a)$.
We say that a Jacobi field is \emph{$L$-Jacobi} if 
\begin{itemize}
\item $J(a)$ is tangent to $L$, 
\item  $\mathcal{S}_{\dot{\gamma}(a)}J(a)=\tan_{\dot{\gamma}(a)} J'(a)$,
\end{itemize}
where  
$\mathcal{S}_{\dot{\gamma}}:T_{p}L\to T_{p}L$ is the \emph{shape operator} 
defined as $\mathcal{S}_{\dot{\gamma}}(u)=\mathrm{tan}_{\dot\gamma(a)} \nabla^{\dot\gamma(a)}_{u} \xi $, with $\xi$
 an orthogonal vector field along $L$ such that $\xi_p=\dot{\gamma}(a)$ and $\mathrm{tan}_{\dot\gamma(a)}$, the $g_{\dot\gamma(a)}$-orthogonal projection into $T_{p}L$. 
An instant $t_1$ is called
{\em $L$-focal} (and $\gamma(t_{1})$ {\em a focal point})
if there exists an $L$-Jacobi
field $J$ such that $J(t_1)=0$.
\end{definition}
One can check that the shape operator is symmetric since the connection is symmetric (recall \cite[Eq. (16)]{JavSoa15}).
Observe
that to compute $\mathrm{tan}_{\dot\gamma(a)} \nabla^{\dot\gamma(a)}_{u} \xi$, one formally needs a vector field  $\xi$ along $L$ which extends $\xi_{p}$,  but it turns out that this quantity does not depend on  the chosen extension, but only on $\xi_{p}$ (see \cite[Prop. 3.5]{JavSoa15}).

\subsection{Geodesic vector fields}
As we have stressed in \S \ref{section-finsler-metrics}, all the computations at a direction $v\in T_pM\setminus\{0\}$ can be made using a vector field $V$ without singularites defined on some neighborhood $\Omega$ of $p\in M$ with the affine connection $\nabla^V$. The choice of this $V$ is arbitrary, so, for example, it is possible to choose $V$ such that $\nabla^v_uV=0$ for all $u\in T_pM$ (see \cite[Prop. 2.13]{J20}). As, in this work, we will deal mainly with geodesics, it will be particularly convenient to choose a geodesic vector field $V$. In such a case, it is possible to relate some elements of the Chern connection with the Levi-Civita connection of the Riemannian metric $g_V$.
\begin{proposition}
\label{proposition-covariantderivative-finsler}
Let $V$ be a geodesic field on an open subset $U\subset M$ and $\hat{g}:=g_{V}$ denote
the Riemannian metric on $\Omega$  induced by the fundamental tensor $g$, 
 and let  $\widehat{\nabla}$ and $\widehat{R}$  be the Levi-Civita connection and   the Jacobi operator of
$\hat{g}$, respectively. Then, for any $X\in{\mathfrak X}(\Omega)$,
\begin{enumerate}[(i)]
\item $\widehat{\nabla}_XV=\nabla^V_XV$ and $\widehat{\nabla}_VX=\nabla^V_VX$,
\item $\widehat{R}_{V}X=R_{V} X$.
\end{enumerate}
As a consequence, the integral curves of $V$ are also geodesics of $\widehat{g}$, and 
the Finslerian Jacobi operator and Jacobi fields along the integral curves of $V$ coincide
with those of $\widehat{g}$.
\end{proposition}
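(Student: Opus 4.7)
The plan is to exploit two standard facts together. First, by degree-one homogeneity of $F$ (equivalently, degree-two homogeneity of $F^2$), the Cartan tensor satisfies $C_v(v,\cdot,\cdot)=0$; by its full symmetry in the three vector arguments, any Cartan expression in which $V$ appears as one of those arguments vanishes. Second, since $V$ is a geodesic field, $\nabla^V_V V=0$. Together these facts will make the Cartan correction in the almost $g$-compatibility of the Chern connection disappear in every place where it would otherwise obstruct the identification with the Levi-Civita connection of $g_V$.

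For part (i), I would apply the Koszul identity for the Levi-Civita connection of $\hat g=g_V$,
\[
2 g_V(\widehat\nabla_X V, Z) = X g_V(V,Z) + V g_V(X,Z) - Z g_V(X,V) + g_V([X,V],Z) - g_V([X,Z],V) - g_V([V,Z],X),
\]
and rewrite the three directional derivatives on the right using the almost $g$-compatibility of $\nabla^V$. In the first and the third summands, the Cartan correction $2C_V(\nabla^V_{(\cdot)}V,\cdot,V)$ vanishes because $V$ appears as an argument of $C_V$; in the second, the correction $2C_V(\nabla^V_V V,X,Z)$ vanishes because $\nabla^V_V V=0$. Torsion-freeness of $\nabla^V$ then absorbs the bracket terms exactly as in the ordinary Koszul derivation, yielding $g_V(\widehat\nabla_X V,Z)=g_V(\nabla^V_X V,Z)$ for every $Z$, hence $\widehat\nabla_X V=\nabla^V_X V$. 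The second identity follows by subtracting the torsion-free identities $\widehat\nabla_V X-\widehat\nabla_X V=[V,X]=\nabla^V_V X-\nabla^V_X V$ and using what we have just proved.

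For part (ii), I would argue at the level of the Jacobi equation along an integral curve $\gamma$ of $V$. Fix $p\in\Omega$ and $w\in T_pM$, and let $J$ be the $\hat g$-Jacobi field along $\gamma$ with $J(0)=w$ and any prescribed initial derivative. Applying (i) (on a smooth extension) once to $J$ and once to $\nabla^V_V J$, we get $\widehat\nabla_V J=\nabla^V_V J$ and $\widehat\nabla_V\widehat\nabla_V J=\nabla^V_V\nabla^V_V J$ along $\gamma$; since covariant derivatives along $\gamma$ depend only on values along $\gamma$, the choice of extension is immaterial. The Riemannian Jacobi equation then reads
\[
\nabla^V_V\nabla^V_V J + \widehat R_V J = 0,
\]
which must coincide with the Finslerian Jacobi equation \eqref{jacobi} along $\gamma$, forcing $\widehat R_V J(t)=R_V J(t)$. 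Evaluating at $t=0$, where $J(0)=w$ is arbitrary and both operators are linear in their argument, yields $\widehat R_V w=R_V w$ at $p$, and $p$ was arbitrary.

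The stated consequence is then immediate: $\widehat\nabla_V V=\nabla^V_V V=0$ shows that integral curves of $V$ are $\hat g$-geodesics, and the coincidence of the Jacobi operators, combined with the identity $\widehat\nabla_V X=\nabla^V_V X$ from (i), identifies the two families of Jacobi fields along those curves. The main obstacle, in my estimation, is simply the careful bookkeeping of the three Cartan corrections in the Koszul computation; there is no deeper geometric content beyond the two vanishing mechanisms identified at the outset.
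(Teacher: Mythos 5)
Your proof of part (i) is correct and matches what the cited references do: substitute $Y=V$ into the Koszul identity for the Levi--Civita connection of $\hat g=g_V$, rewrite the three directional derivatives via the almost $g$-compatibility of the Chern connection, and observe that each Cartan correction vanishes for exactly one of your two reasons ($C_V$ vanishes whenever $V$ appears as one of its arguments, or $\nabla^V_VV=0$ because $V$ is geodesic). The bracket terms then cancel as in the Riemannian Koszul derivation, giving $\widehat\nabla_XV=\nabla^V_XV$, and the second identity follows by torsion-freeness.

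Your proof of part (ii), however, is circular. You let $J$ be the $\hat g$-Jacobi field with $J(0)=w$, correctly deduce $\widehat\nabla_V\widehat\nabla_VJ=\nabla^V_V\nabla^V_VJ$ from (i), and then assert that the resulting Riemannian Jacobi equation ``must coincide with the Finslerian Jacobi equation \eqref{jacobi}.'' But nothing you have established says that $J$ solves \eqref{jacobi}: by construction $J$ is the variational vector field of a variation by $\hat g$-geodesics, and $\hat g$-geodesics are not Finsler geodesics (only the integral curves of $V$ are geodesics for both metrics), so $J$ has no a priori relation to variations by Finsler geodesics. The claim that $J$ satisfies \eqref{jacobi} is equivalent to $\widehat R_VJ=R_VJ$ --- exactly what you set out to prove. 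The ``consequence'' stated after the proposition is downstream of (ii) and cannot be used inside its proof.

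The correct route to (ii) is a direct computation with the anisotropic curvature formula \eqref{curten}. Setting $Y=Z=V$ there gives
\[ R_V(X,V)V=R^V(X,V)V-P_V(V,V,\nabla^V_XV)+P_V(X,V,\nabla^V_VV). \]
The last term vanishes since $\nabla^V_VV=0$. The middle term also vanishes, because for the Chern connection $v^iv^j\tfrac{\partial\Gamma^k_{ij}}{\partial y^l}(v)=0$: differentiate the spray identity $v^iv^j\Gamma^k_{ij}(v)=2G^k(v)$ in $y^l$ and use $v^i\Gamma^k_{il}(v)=\tfrac{\partial G^k}{\partial y^l}(v)$. Hence $R_VX=R^V(X,V)V$. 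On the other hand, expanding $\widehat R(X,V)V$ with the Levi--Civita curvature formula and applying (i) term by term (using $\widehat\nabla_VV=0$, $\widehat\nabla_V\widehat\nabla_XV=\nabla^V_V\nabla^V_XV$, and $\widehat\nabla_{[X,V]}V=\nabla^V_{[X,V]}V$) yields $\widehat R(X,V)V=-\nabla^V_V\nabla^V_XV-\nabla^V_{[X,V]}V=R^V(X,V)V$ as well. This establishes (ii), and only then does the statement about Jacobi fields follow as you describe.
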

A proof of this result can be found in \cite[Prop. 6.2.2]{Zhongmin-Shen} and also in \cite[Prop. 3.9]{J20}.

\subsection{Finsler submersions and foliations}
\label{section-Finslersubmersionsandfoliations}
One of the first  examples of Finsler foliations is the partition of $M$ into the fibers of a Finsler submersion.
Recall that a submersion  $\pi:(M,F)\rightarrow (B,\tilde{F})$  between Finsler manifolds is a  \emph{Finsler submersion} if 
$d\pi_p(B^F_p(0,1))=B^{\tilde F}_{\pi(p)}(0,1),$ for every $p\in M$,  where $B^F_p(0,1)$ and $B^{\tilde F}_{\pi(p)}(0,1)$ 
are the unit balls of the Minkowski spaces $(T_pM,F_p)$ and $(T_{\pi(p)}B,{\tilde F}_{\pi (p)})$ centered at $0$, respectively.
Examples and a few properties of  Finsler submersions can be found in \cite{Alexandrino-Alves-Javaloyes} and  \cite{Duran}.

Recall that we say that a geodesic $\gamma:I\subset \RR\rightarrow M$ is horizontal if $\dot\gamma(t)$ is an orthogonal vector to the fiber $\pi^{-1}(\pi(\gamma(t)))$ for every $t\in I$. Here we just  need to recall the lift property (see \cite[Theorem 3.1]{Duran}). 

\begin{proposition}
\label{proposition-duran-geodesics-submersion}
Let $\pi:(M,F)\to (B, \tilde{F})$ be a Finsler submersion. Then an immersed curve on $B$ is a geodesic if and only if
its horizontal lifts are geodesics on $M$. In particular, the geodesics of $(B,\tilde F)$ are precisely the projections of horizontal
geodesics of $(M,F)$.
\end{proposition}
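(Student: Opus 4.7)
The plan is to characterize Finsler geodesics as constant-speed critical points of the $F$-length functional for fixed-endpoint variations, and to combine this with two elementary features of $\pi$: the Lipschitz inequality $\tilde F(d\pi_p v)\le F(v)$ (with equality precisely when $v$ is horizontal), and length-preservation of horizontal curves, $L_F(\gamma)=L_{\tilde F}(\pi\circ\gamma)$ whenever $\dot\gamma$ is horizontal. Both are immediate consequences of the defining relation $d\pi_p(B^F_p(0,1))=B^{\tilde F}_{\pi(p)}(0,1)$. A Lagrange-multiplier analysis of the problem $F(v')^2\to\min$ subject to $d\pi_p(v')=w$ identifies the horizontal lift $w^\uparrow$ as the unique $v$ with $d\pi_p v=w$ satisfying $g_{v}(v,u)=0$ for all $u\in\ker d\pi_p$; in particular, along a horizontal curve $\gamma$, the velocity $\dot\gamma(t)$ is $g_{\dot\gamma(t)}$-orthogonal to the fiber $\pi^{-1}(\pi(\gamma(t)))$ at every $t$.

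For the direction ``$c$ geodesic implies horizontal lift geodesic,'' let $\gamma$ be a horizontal lift of $c$ and take any smooth variation $\gamma_s$ of $\gamma|_{[a,b]}$ with fixed endpoints $\gamma(a),\gamma(b)$. The projected variation $c_s:=\pi\circ\gamma_s$ has fixed endpoints on $B$; the Lipschitz inequality gives $L_F(\gamma_s)\ge L_{\tilde F}(c_s)$, and this is an equality at $s=0$ by horizontality of $\gamma$. Hence the nonnegative smooth function $s\mapsto L_F(\gamma_s)-L_{\tilde F}(c_s)$ attains its minimum at $s=0$ and has vanishing derivative there, so
\[\frac{d}{ds}\Big|_{s=0}L_F(\gamma_s)=\frac{d}{ds}\Big|_{s=0}L_{\tilde F}(c_s)=0,\]
the latter by criticality of $c$. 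Since $F(\dot\gamma)=\tilde F(\dot c)$ is constant, $\gamma$ is a geodesic.

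For the converse, let $\gamma$ be a horizontal geodesic, put $c:=\pi\circ\gamma$, and take any smooth fixed-endpoint variation $c_s$ of $c|_{[a,b]}$. Horizontally lift $c_s$ from $\gamma(a)$ to a variation $\gamma_s$ of $\gamma$; smoothness of $s\mapsto\gamma_s$ follows from the fact that the horizontal distribution is smooth on $TM\setminus\{0\}$, together with the standard smooth dependence of ODEs on parameters. Then $\gamma_s(a)=\gamma(a)$, while $\gamma_s(b)$ varies in the fiber $\pi^{-1}(c(b))$, so the variation field $V$ satisfies $V(a)=0$ and $V(b)\in T_{\gamma(b)}(\pi^{-1}(c(b)))$. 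The Finslerian first variation of length applied to $\gamma_s$ gives
\[\frac{d}{ds}\Big|_{s=0}L_F(\gamma_s)=\bigg[\frac{g_{\dot\gamma}(\dot\gamma,V)}{F(\dot\gamma)}\bigg]_a^b-\int_a^b\frac{g_{\dot\gamma}(D^{\dot\gamma}_\gamma\dot\gamma,V)}{F(\dot\gamma)}\,dt.\]
The integral vanishes because $\gamma$ is a geodesic; the boundary term at $a$ vanishes because $V(a)=0$; and the boundary term at $b$ vanishes because $\dot\gamma(b)$ is $g_{\dot\gamma(b)}$-orthogonal to $V(b)$. Hence $\frac{d}{ds}|_{s=0}L_F(\gamma_s)=0$, and since $L_F(\gamma_s)=L_{\tilde F}(c_s)$ by horizontality, $\frac{d}{ds}|_{s=0}L_{\tilde F}(c_s)=0$ for every admissible variation. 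Constant speed $\tilde F(\dot c)=F(\dot\gamma)$ then makes $c$ a geodesic, and the ``in particular'' statement is an immediate consequence.

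The main technical delicacies are the smooth $s$-dependence of the horizontal lift used in the converse and the identification of the $F$-minimizing lift with the $g_v$-orthogonal complement of the vertical distribution; once these are in place, the argument reduces to an application of the first variation formula and the trivial inequalities built into the definition of a Finsler submersion.
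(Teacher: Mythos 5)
Your proof is correct, but note that the paper itself contains no proof of this proposition: it is quoted from \'Alvarez Paiva--Dur\'an \cite[Theorem 3.1]{Duran}, so your self-contained first-variation argument is an alternative to citing the literature rather than a variant of an internal proof. The structure is sound: the squeeze $L_F(\gamma_s)\ge L_{\tilde F}(c_s)$ with equality at $s=0$ in the forward direction, and the boundary-term computation in the converse, both do what you claim. The two places that genuinely need the care you flag are (i) the identification of the $F$-minimizing lift of $w$ with the unique $v\in d\pi_p^{-1}(w)$ satisfying $g_v(v,u)=0$ for all vertical $u$ --- this follows from strict convexity of $F^2$ on the affine subspace $d\pi_p^{-1}(w)$, whose unique critical point is its minimizer, and it matches the paper's definition of ``horizontal'' as $g_{\dot\gamma}$-orthogonality to the fibers --- and (ii) smooth dependence on $s$ of the horizontal lift in the converse direction; since the horizontal ``distribution'' is only a smooth cone field in the Finsler setting, the right formulation is that the lift map $(p,w)\mapsto w^{\uparrow}_p$ is smooth for $w\neq 0$ by the implicit function theorem, after which the lift of $c_s$ solves an ODE with smooth dependence on parameters (and exists on all of $[a,b]$ for $|s|$ small by continuous dependence). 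Two minor points worth recording explicitly: differentiability of $s\mapsto L_{\tilde F}(c_s)$ at $s=0$ in the forward direction uses $\dot c_s\neq 0$ for small $|s|$, which holds because $c$ is immersed and $[a,b]$ is compact; and passing from ``constant-speed critical point of length'' to ``geodesic'' uses the standard fact that $D^{\dot\gamma}_\gamma\dot\gamma=0$ follows from criticality once $F(\dot\gamma)$ is constant. With these in place, the argument is complete.
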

This result allows us to get a geodesic field on $(M,F)$ that projects on a geodesic field on $(B,\tilde F)$, which will simplify many computations.
\begin{proposition}\label{redRiesub}
Let $\pi:(M,F)\to (B, \tilde{F})$ be a Finsler submersion and $V^*$ a geodesic field in some open subset $\tilde U$ of $B$. Then the horizontal lift $V$ of $V^*$ is  a geodesic vector field on $U=\pi^{-1}(\tilde U)$ and the restriction $\pi|_U:(U,g^F_V)\rightarrow (\tilde U,g^{\tilde F}_{V^*})$ is a Riemannian submersion, where $g^F$ and $g^{\tilde F}$ are the fundamental tensors of $F$ and $\tilde F$, respectively. 
\end{proposition}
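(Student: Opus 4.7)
The plan is to split the proposition into two independent claims: first, that the horizontal lift $V$ of $V^*$ is a geodesic vector field on $U=\pi^{-1}(\tilde U)$; second, that $\pi|_U:(U,g^F_V)\to(\tilde U,g^{\tilde F}_{V^*})$ is a Riemannian submersion in the usual sense. The first claim is largely formal, and the second reduces, pointwise, to the defining property of a Finsler submersion at a horizontal reference vector.

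For the first claim, I would observe that by construction each integral curve of $V$ is a horizontal curve that projects under $\pi$ onto an integral curve of $V^*$. Since $V^*$ is a geodesic vector field on $\tilde U$, those projections are $\tilde F$-geodesics, and Proposition \ref{proposition-duran-geodesics-submersion} then guarantees that every horizontal lift of such a $\tilde F$-geodesic is an $F$-geodesic. Hence every integral curve of $V$ is an $F$-geodesic and $V$ is a geodesic vector field on $U$.

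For the second claim, I would work pointwise at $p\in U$. Surjectivity of $d\pi_p$ is immediate from $\pi$ being a submersion, so the only nontrivial issue is to show that the restriction of $d\pi_p$ to the $g^F_{V_p}$-orthogonal complement of $\ker d\pi_p$ is a linear isometry onto $(T_{\pi(p)}B, g^{\tilde F}_{V^*_{\pi(p)}})$. By construction $V_p$ is horizontal in the Finsler sense, i.e.\ $g^F_{V_p}(V_p,w)=0$ for every $w\in\ker d\pi_p$, and $d\pi_p(V_p)=V^*_{\pi(p)}$ with $F(V_p)=\tilde F(V^*_{\pi(p)})$. The key ingredient I would then invoke is the standard fact for Finsler submersions (see \cite{Duran} and \cite{Alexandrino-Alves-Javaloyes}) that at such a horizontal vector $v$, the linear horizontal subspace $H_v=\{u\in T_pM:g_v(u,w)=0\text{ for all }w\in\ker d\pi_p\}$ is mapped by $d\pi_p$ isometrically onto $(T_{\pi(p)}B,g_{d\pi_p(v)})$. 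Specialising this to $v=V_p$, and noting that at $p$ one has $g^F_V=g^F_{V_p}$ and $g^{\tilde F}_{V^*}=g^{\tilde F}_{V^*_{\pi(p)}}$, so that $H_{V_p}$ is precisely the $g^F_V$-orthogonal complement of $\ker d\pi_p$, the Riemannian submersion property follows at $p$; since $p$ was arbitrary, we are done.

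The main obstacle is the isometry statement for $d\pi_p$ on the horizontal subspace, which is really where the full content of the Finsler submersion hypothesis $d\pi_p(B^F_p(0,1))=B^{\tilde F}_{\pi(p)}(0,1)$ gets used. If this fact were not available in the cited literature, one would have to argue it by hand by combining the equality of norms $F(V_p)=\tilde F(V^*_{\pi(p)})$ with a polarisation of the fundamental tensor via second derivatives of $F^2$ along horizontal variations, transferring them through $d\pi_p$ to second derivatives of $\tilde F^2$; the rest of the argument is essentially bookkeeping.
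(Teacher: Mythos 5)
Your argument is correct and takes the same route as the paper: the first claim is reduced to Proposition \ref{proposition-duran-geodesics-submersion} exactly as in the paper, and the second claim is reduced to the isometry-of-horizontal-spaces property of Finsler submersions, which is precisely the content of \cite[Prop.\ 2.2]{Duran} that the paper cites. Your version just spells out the pointwise verification that the paper leaves implicit.
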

\begin{proof}
That $V$ is a geodesic field follows from Proposition \ref{proposition-duran-geodesics-submersion}, while the last statement is a consequence of \cite[Prop. 2.2]{Duran}.
\end{proof}

A {\em Finsler foliation} of $M$ is a regular foliation $\F$ such that at every point $p\in M$, there exists a neighborhood $U$ in such a way that $\F|_U$ can be obtained as the fibers of a Finsler submersion. This is equivalent to the property of {\em transnormality}: 
\begin{align}\label{horgeo}
&\text{\em if a geodesic is orthogonal to one leaf of $\F$, then it is orthogonal to   all the }\\ &\text{\em fibers it meets. }\nonumber
\end{align}
Recall that a {\em singular foliation} of $M$  is a partition of $M$ by submanifolds, which are called leaves as in the case of regular foliations, with the following property: given $p\in M$, if $v\in T_pM$ is a vector tangent to a leaf, there exists a smooth vector field $X$ in a neighborhood of $p$ such that $X_p=v$ and $X$ is always tangent to the leaves of the foliation.
 This is  equivalent to saying that  for each $p$ there exists a neighborhood $U$ of $p$,   a neighborhood $B(0)$ of $0$ in $\RR^k$,  
and a submersion $\pi: U\to B(0)\subset\RR^{k}$ so that each
fiber $\pi^{-1}(x)$, with $x\in B(0)$, is contained in a leaf, and $P_p=\pi^{-1}(0)$
is a precompact open subset of the leaf $L_p$ that contains $p$ (see \cite[Lemma 3.8]{Alexandrino-Alves-Javaloyes}).
The submanifold $P_p$ is called \emph{plaque}. Note that $k$ here
is the codimension of $L_p$ and $B(0)$ can be considered to be a 
transverse submanifold $S_p$ to $P_p$. By definition, the leaves
$\F$ on $U$ must intersect $S_p$.    
Finally, we say that the singular foliation is \emph{Finsler} if  the property  \eqref{horgeo} holds. If $P_q$ is a plaque, 
 there exist future and past tubular neighborhoods denoted, respectively, by ${\mathcal O}(P_q,\varepsilon)$ and $\tilde{\mathcal O}(P_q,\varepsilon)$ of a certain radius $\varepsilon>0$ (see \cite{AlJav19}) 
and in these tubular neighborhoods a singular Finsler foliation can be characterized using the cylinders $\mathcal{C}^{+}_{r_{1}}(P_{q})=\{p\in M: d_F(P_q,p)=r_1\}$ and $\mathcal{C}^{-}_{r_{2}}(P_{q})=\{p\in M: d_F(p,P_q)=r_2\}$, where $d_F(P_q,p)$ is computed as the infimum of the lengths of curves from $P_q$ to $p$ and 
$d_F(p,P_q)$ as the infimum of the lengths of curves from $p$ to $P_q$. Recall that given $x\in {\mathcal O}(P_q,\varepsilon)$ (resp. $\tilde{\mathcal O}(P_q,\varepsilon)$), the plaque $P_x$  is the connected component which contains $x$ of the intersection of the leaf through $x$ with 
${\mathcal O}(P_q,\varepsilon)$  (resp. $\tilde{\mathcal O}(P_q,\varepsilon)$).
\begin{lemma}
\label{lemma-equidistant}
A  singular  foliation $\F$  is Finsler if and only if its leaves are locally equidistant, i.e., if its leaves satisfy  the following property:
if a point $x$ belongs to the future cylinder  $\mathcal{C}^{+}_{r_{1}}(P_{q})$ (resp. the past cylinder $\mathcal{C}^{-}_{r_{2}}(P_{q})$),  then the plaque
$P_{x}$ of the future (resp. past) tubular neighborhood is contained in  $ \mathcal{C}^{+}_{r_{1}}(P_{q})$ (resp. $\mathcal{C}^{-}_{r_{2}}(P_{q})$).
\end{lemma}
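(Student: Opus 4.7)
The plan is to use the distance-to-plaque function as the bridge between the two conditions, treating the future cylinder case in detail; the past cylinder case follows by the symmetric argument with $\psi(z):=d_F(z,P_q)$ on the past tubular neighborhood $\tilde{\mathcal O}(P_q,\varepsilon)$.

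For $(\Rightarrow)$, assume $\F$ is Finsler (property \eqref{horgeo}) and define $\phi(z):=d_F(P_q,z)$ on $\mathcal{O}(P_q,\varepsilon)$. Standard results on Finslerian tubular neighborhoods (see \cite{AlJav19}) ensure that $\phi$ is smooth on $\mathcal{O}(P_q,\varepsilon)\setminus P_q$, that $F^*(d\phi)\equiv 1$, and that the Legendre-dual vector field $V:=(d\phi)^\sharp$ is a unit geodesic vector field whose integral curves are precisely the minimizing unit-speed geodesics issuing from $P_q$. By the first variation formula, such a geodesic is orthogonal to $P_q$ at its foot, so by transnormality $V$ is orthogonal to every leaf it meets. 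Using the identity $d\phi(\cdot)=g_V(V,\cdot)$, this orthogonality translates into $d\phi|_{TL}\equiv 0$ for every leaf $L$. Hence $\phi$ is constant along each plaque in $\mathcal{O}(P_q,\varepsilon)$; in particular, if $x\in\mathcal{C}^+_{r_1}(P_q)$ then $P_x\subset\mathcal{C}^+_{r_1}(P_q)$.

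For $(\Leftarrow)$, assume the leaves are locally equidistant and let $\gamma$ be a geodesic with $\dot\gamma(t_0)\perp T_{\gamma(t_0)}L_{\gamma(t_0)}$. Set $P:=P_{\gamma(t_0)}$. The orthogonality together with the first variation of the arclength from a submanifold implies that $\gamma$ is minimizing from $P$ on a short interval to the right of $t_0$, so $\phi(\gamma(t))=d_F(P,\gamma(t))=t-t_0$ for $t$ slightly larger than $t_0$. The equidistance hypothesis then gives $P_{\gamma(t)}\subset\mathcal{C}^+_{t-t_0}(P)$, i.e., $\phi\equiv t-t_0$ on $P_{\gamma(t)}$, whence $d\phi$ annihilates $T_{\gamma(t)}L_{\gamma(t)}$. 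Combining this with $\dot\gamma(t)=V_{\gamma(t)}$ and $d\phi=g_V(V,\cdot)$ yields $g_{\dot\gamma(t)}(\dot\gamma(t),w)=0$ for every $w\in T_{\gamma(t)}L_{\gamma(t)}$, i.e., $\dot\gamma(t)\perp L_{\gamma(t)}$. A symmetric argument using $\psi$ and the past tubular neighborhood handles the parameter $t<t_0$ and the cylinders $\mathcal{C}^-_{r_2}$, establishing \eqref{horgeo} in full.

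The main obstacle is not a genuinely new computation but the careful invocation of the right analytic properties of $\phi$ on the tubular neighborhood (smoothness on the punctured tube, the relation $d\phi=g_V(V,\cdot)$, and that $V$ is a unit geodesic vector field). These are precisely the features of Finslerian tubular neighborhoods collected in \cite{AlJav19}, so the proof reduces to organizing them correctly and handling the future/past asymmetry, which is the one place where the non-reversibility of $F$ demands attention.
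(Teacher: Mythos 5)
The paper does not supply its own proof of this lemma; it simply cites \cite[Lemma 3.7]{Alexandrino-Alves-Javaloyes}, so a line-by-line comparison is not possible. Your strategy --- working with the distance-to-plaque function $\phi=d_F(P_q,\cdot)$ on the tubular neighborhood, its Legendre-dual unit geodesic gradient $V$, and the eikonal identity $d\phi=g_V(V,\cdot)$ --- is a natural and sound way to pass between transnormality and local equidistance, and the forward implication is correct as written.

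The backward implication, however, establishes strictly less than property~\eqref{horgeo}, and the closing phrase ``establishing~\eqref{horgeo} in full'' overstates what has been shown. Your argument yields $\dot\gamma(t)\perp T_{\gamma(t)}L_{\gamma(t)}$ only for $t$ in a small interval around $t_0$, namely for as long as $\gamma$ remains a minimizer from $P_{\gamma(t_0)}$ inside its tubular neighborhood. Transnormality demands orthogonality along the \emph{entire} domain of $\gamma$, including after it exits the tube and crosses other (possibly singular) leaves. To close this you need a connectedness argument: the set $A=\{t:\dot\gamma(t)\perp T_{\gamma(t)}L_{\gamma(t)}\}$ is open because your local argument may be restarted at any $t_1\in A$; it is closed because, given $w\in T_{\gamma(t_1)}L_{\gamma(t_1)}$, the singular-foliation hypothesis provides a smooth vector field $X$ tangent to the leaves with $X_{\gamma(t_1)}=w$, and one passes to the limit in $g_{\dot\gamma(t)}(\dot\gamma(t),X_{\gamma(t)})=0$; hence $A$ is the whole interval. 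Without this step, the proof of $(\Leftarrow)$ is incomplete. A smaller imprecision: short-time minimality of the $P$-orthogonal geodesic does not follow from the first variation alone (which gives only criticality); it is precisely the tubular-neighborhood structure from \cite{AlJav19} that should be invoked at that point.
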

A proof of the above characterization can be found in \cite[Lemma 3.7]{Alexandrino-Alves-Javaloyes}.


\begin{figure}[hhhtb]
\label{figura1-cilindros-placas}
\includegraphics[scale=0.4]{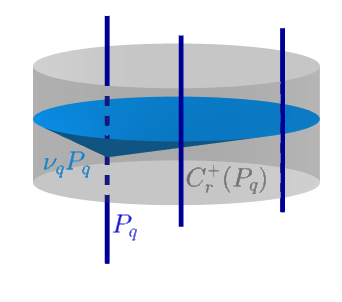}
\caption{\small{Figure generated by the software geogebra.org, illustrating a few concepts of \S \ref{section-Finslersubmersionsandfoliations},
such as a future cylinder $C_{r}^{+}(P_q)$ of a plaque $P_q$, other plaques contained
in this future cylinder and the normal cone at $q$ denoted by $\nu_{q}P_q$, i.e, the set of vectors $\xi\in T_{q}M$
so that $g_{\xi}(\xi,X)=0$ for all $X\in T_{q}P_q$. Here the leaves are the fibers of the Finsler submersion $\pi:(M,F) \to (B,\tilde{F})$,
where $M=\mathbb{R}^{3}$, $B=\mathbb{R}^{2}$, $\pi(x)=(x_1,x_2)$  and $F$ and $\tilde{F}$ are  
Randers metrics  having the Euclidean metrics and the winds $W=(\frac{1}{2},0,\frac{\sin^{2}(x_1)+1}{4})$ and
$\widetilde{W}=(\frac{1}{2},0)$ as Zermelo data, respectively. 
} }
\end{figure}

\begin{remark}
 Given a (singular) Riemannian foliation   $\F$ with closed leaves  
on a complete Riemannian manifold $(M,\mathsf{h})$ and an $\F$-basic vector field $W$, then
$\F$ is a (singular) Finsler foliation for the Randers metric with Zermelo data $(\mathsf{h},W)$,
c.f   \cite[Example 2.13]{Alexandrino-Alves-Javaloyes}.
 The converse is also true. In fact,   
according with  \cite[Theorem 1.1]{Alexandrino-Alves-Javaloyes}, 
  every singular Finsler foliation with closed leaves on  Randers spaces $(M,F)$ is produced in this way. 

\end{remark}



\section{Wilking's construction}
\label{section-wilkingsdistribution}


In \cite{WilkGAFA}, Wilking proved the smoothness of the
Sharafutdinov's retraction and studied dual foliations of singular Riemannian foliations (SRF for short). For that,
he used results on self-adjoint spaces, regular distributions along geodesics
and Morse-Sturm systems along these distributions, see also \cite{Gromoll-Walshap}.
These tools turn out to be quite useful in the study of the transversal geometry of SRF's (see e.g. \cite{LytchakThorbergsson1}).
Roughly speaking, given an SRF  $\F$ on $M$, 
for each horizontal geodesic $\gamma$  (that may cross singular leaves) 
one can define a distribution $t\to \WilkingH (t)$ 
along $\gamma$ so that $\WilkingH (t)$ is the normal distribution  $\nu_{\gamma(t)}(L)$ when $\gamma(t)$  lies in a regular leaf;  for the sake of simplicity we call this distribution \emph{Wilking's distribution}. 
In addition, when the leaves of $\F$ are closed
and $\pi: M\to M/\F$ is the canonical projection,  being the quotient $M/\F$ a manifold with the induced Riemannian metric,  one can identify the Jacobi fields along $\pi\circ\gamma$
with the solutions of a Morse-Sturm system along $t\to \WilkingH (t)$, the so-called \emph{transversal Jacobi field equation}. 

In this section we review the Finsler version of these objects; see Definition 
\ref{definition-wilkings-distribution-transverse-jacobi}.
We also present a few results on Finsler Jacobi fields,  whose proofs will be given  in \S \ref{apendice2}.  

\begin{definition}
Let $\gamma:I\subseteq \RR\rightarrow M$ be a geodesic of a Finsler manifold $(M,F)$ and $\selfadjointW$ a linear subspace of Jacobi fields so that $\gamma$ is orthogonal to the elements of
$\selfadjointW$, i.e., $g_{\dot{\gamma}}(\dot{\gamma},J)=0$ for every $J\in \selfadjointW$. The vector space $\selfadjointW$ is said to be \emph{self-adjoint} if $g_{\dot{\gamma}}(J_{1}',J_{2})=g_{\dot{\gamma}}(J_{1},J_{2}')$, 
for $J_{1}, J_{2} \in \selfadjointW$. 
\end{definition}

\begin{remark}
\label{remark-selfadjoint-at-zero}
One can prove that given two Jacobi fields $J_{1}$ and $J_{2}$ along $\gamma$ then  
$g_{\dot{\gamma}}(J_{1}',J_{2}) -g_{\dot{\gamma}}(J_{1},J_{2}')$ is constant on the interval $I$. 
Therefore to show that a vector space $\selfadjointW$  of Jacobi fields  is self-adjoint, it suffices to check  that 
$g_{\dot{\gamma}(t_0)}(J_{1}'(t_0),J_{2}(t_0))=g_{\dot{\gamma}(t_0)}(J_{1}(t_0),J_{2}'(t_0))$ for some $t_0\in I$  (see for example \cite[Prop. 3.18]{JavSoa15}). 
\end{remark}

In the next lemma, we present the distribution $t \to \WilkingH(t)$ associated with a general subspace $\selfadjointV$
of  $\selfadjointW$ as well as the (generalized) transversal Jacobi field equation.

\begin{lemma}
\label{lemma-dimVconstant}
Let $\selfadjointW$ be an $(n-1)$-self-adjoint vector space of Jacobi fields orthogonal to a geodesic $\gamma:I\subseteq\RR\rightarrow M$ on a Finsler manifold $(M^{n},F).$ 
Let $\selfadjointV$ be a linear subspace of $\selfadjointW$ and hence also a self-adjoint space. 
 Define the subspace $\WilkingV(t)$ of $T_{\gamma(t)}M$ for every $t\in I$ as
$$\WilkingV(t):=\{J(t) | J \in \selfadjointV \} \oplus \{ J'(t) | J\in \selfadjointV, J(t)=0\}.$$
\begin{enumerate}
\item[(a)]
Then $\dim \WilkingV(t)=\dim \selfadjointV$ for every $t\in I$. Furthermore, the second summand is trivial for almost every $t$. 
\item[(b)] Let 
$t \to \WilkingH(t)$  be the orthogonal complement of $\WilkingV(t)$  with respect to $g_{\dot{\gamma}(t)}$, i.e., $w\in \WilkingH(t)$ if and only if   
$g_{\dot{\gamma}(t)}(u,w)=0$ for all $u\in \WilkingV(t)$.
 Let $(\cdot)^{\h}$ and $(\cdot)^{\vp}$ be the
orthogonal projections (with respect to $g_{\dot{\gamma}}$) into $\WilkingH(t)$ and $\WilkingV(t)$, respectively. Then  if $J\in \selfadjointW$, 
$J^{\h}$ fulfills the transversal Jacobi equation, i.e., 
\begin{equation} 
\label{eqJacWil2} 
(\Dh)^2 (J^{\h}) + (R_{\dot{\gamma}}(J^{\h}))^\h - 3 (\mathbb{A}_\gamma)^2( J^{\h}) = 0,
\end{equation} 
where $\Dh$ is the induced connection on the horizontal bundle,   which is defined as  $\Dh(X)=((X^\h)')^\h$, 
and  $\mathbb{A}_{\gamma}$ is the   
 O' Neill tensor along the geodesic $\gamma$, i.e., 
$\mathbb{A}_\gamma (X) := \big( (X^\h)'\big)^\vp + \big(  (X^\vp)'\big)^\h$
for every $X\in \mathfrak{X}(\gamma)$. 
\item[(c)] If $J_1,\ldots,J_r$ is a basis of $\selfadjointV$ with $r=\dim \selfadjointV$ such that at $t_0\in I$, $J_k(t_0)=\ldots=J_r(t_0)=0$ and $J_i(t_0)\not=0$ for $i=1,\ldots,k-1$, then 
\[J_1(t),\ldots,J_{k-1}(t),\frac{1}{t-t_0}J_{k}(t),\ldots, \frac{1}{t-t_0}J_{r}(t)\]
 is a continuous basis of $\WilkingV(t)$ for every $t$ in  $(t_0-\varepsilon,t_0+\varepsilon)\cap I\setminus \{t_0\}$  for some $\varepsilon>0$, with $\lim_{t\rightarrow t_0} \frac{1}{t-t_0}J_{i}(t)=J_i'(t_0)$ for all $i=k,\ldots,r$.     
\end{enumerate}
\end{lemma}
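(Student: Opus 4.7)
The plan is to reduce everything to a Riemannian problem along the geodesic $\gamma$. Extending $\dot\gamma$ to a geodesic vector field $V$ on a tubular neighborhood of $\gamma(I)$ and applying Proposition \ref{proposition-covariantderivative-finsler}, the Finslerian Jacobi operator $R_{\dot\gamma}$, the induced covariant derivative, and $g_{\dot\gamma}$-orthogonality along $\gamma$ all coincide with those of the Riemannian metric $\hat g = g_V$. Thus the lemma becomes a statement about Jacobi fields along a Riemannian geodesic, and Wilking's linear-algebraic arguments \cite{WilkGAFA} (see also \cite{Gromoll-Walshap}) transfer; I nevertheless outline the verifications directly.

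For part (a), the sum is direct: if $v = J_1(t) = J_2'(t)$ with $J_1, J_2 \in \selfadjointV$ and $J_2(t) = 0$, then self-adjointness gives
\[g_{\dot\gamma(t)}(v,v) = g_{\dot\gamma(t)}(J_1(t), J_2'(t)) = g_{\dot\gamma(t)}(J_1'(t), J_2(t)) = 0,\]
so $v = 0$. With $K(t) := \{J \in \selfadjointV : J(t) = 0\}$, rank-nullity for $J \mapsto J(t)$ together with injectivity of $J \mapsto J'(t)$ on $K(t)$ (any Jacobi field is determined by its value and derivative) yields $\dim \WilkingV(t) = \dim \selfadjointV$. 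For the almost-everywhere triviality of the second summand, fix a basis $J_1,\ldots,J_r$ of $\selfadjointV$: the Gram determinant $w(t) := \det\bigl(g_{\dot\gamma(t)}(J_i(t), J_j(t))\bigr)$ is real-analytic along $\gamma$ (since $\gamma$ and the Jacobi ODE are analytic) and vanishes precisely where $K(t) \neq 0$; self-adjointness together with linear independence of the $J_i$ as Jacobi fields prevents $w \equiv 0$, so its zero set is discrete.

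For part (b), write $J = J^\h + J^\vp$ and differentiate using the definitions of $\Dh$ and $\mathbb{A}_\gamma$: one obtains $J' = \Dh J^\h + \mathbb{A}_\gamma J^\h + ((J^\vp)')^\vp + \mathbb{A}_\gamma J^\vp$, and a second differentiation expresses $J''$ in terms of $(\Dh)^2 J^\h$, $(\mathbb{A}_\gamma)^2 J^\h$, $\Dh$-derivatives of $\mathbb{A}_\gamma J^\h$, vertical covariant derivatives of $J^\vp$, and cross terms. Substituting into $J'' + R_{\dot\gamma} J = 0$ and projecting onto $\WilkingH(t)$ annihilates the purely vertical contributions; the coefficient $-3$ in front of $(\mathbb{A}_\gamma)^2$ emerges by combining three equal contributions from the mixed terms after applying self-adjointness of $\selfadjointV$ to identify $(\mathbb{A}_\gamma J^\vp)^\h$ with a multiple of $\mathbb{A}_\gamma^2 J^\h$. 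I expect this to be the most delicate step, since in the Finsler setting $R_{\dot\gamma}$ is only anisotropically defined; however, Proposition \ref{proposition-covariantderivative-finsler}(ii) identifies $R_{\dot\gamma}$ along $\gamma$ with the Riemannian Jacobi operator $\hat R_{\dot\gamma}$ of $\hat g$, so the classical O'Neill identity transfers without change.

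For part (c), Taylor-expand each $J_i$ with $i \geq k$ at $t_0$: since $J_i(t_0) = 0$, one has $J_i(t) = (t - t_0) J_i'(t_0) + O((t-t_0)^2)$, hence $\lim_{t \to t_0} \frac{1}{t-t_0} J_i(t) = J_i'(t_0)$ and each $\frac{1}{t-t_0} J_i(t)$ extends continuously to $t_0$. Choose $\varepsilon > 0$ small enough that none of the $J_i$ with $i < k$ vanishes and that no $J_i$ with $i \geq k$ has a second zero in $(t_0 - \varepsilon, t_0 + \varepsilon)$, which is possible by the discreteness argument in part (a); each of the $r$ listed vectors then lies in $\WilkingV(t)$ on the punctured interval. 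Linear independence at nearby $t \neq t_0$ follows by continuity from linear independence of the limit vectors $J_1(t_0), \ldots, J_{k-1}(t_0), J_k'(t_0), \ldots, J_r'(t_0)$ at $t_0$, which is precisely the direct-sum decomposition of $\WilkingV(t_0)$ from part (a); together with $\dim \WilkingV(t) = r$, this shows the $r$ vectors form a continuous basis.
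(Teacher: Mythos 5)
Your overall strategy — extend $\dot\gamma$ to a local geodesic field $V$, invoke Proposition \ref{proposition-covariantderivative-finsler} to reduce to the Riemannian metric $\hat g = g_V$, and then transfer Wilking's/Gromoll--Walschap's self-adjoint-space arguments — is the same reduction the paper uses (the paper then cites \cite[Lemma 1.7.1 and Eq.\ (1.7.7)]{Gromoll-Walshap}, with one caveat: a single $V$ need not exist along all of $I$, so one covers $I$ by countably many intervals each carrying its own $V_i$). Your direct-sum and rank-nullity argument for $\dim\WilkingV(t)=\dim\selfadjointV$ in part (a), and your Taylor-expansion and continuity argument for part (c), are correct and match the substance of the cited proof.

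There are two genuine gaps. First, in part (a) your argument for the almost-everywhere triviality of the second summand rests on the Gram determinant $w(t)$ being \emph{real-analytic}, but the lemma is stated for a general (smooth) Finsler manifold $(M^n,F)$; a smooth $w\not\equiv 0$ can vanish on a set of positive measure, so the analyticity is doing all the work and is not available. Moreover, $w\not\equiv 0$ itself is asserted rather than proved. The correct route, and the one implicit in Gromoll--Walschap, is to deduce discreteness of the singular set from the basis statement of part (c): for $t\not=t_0$ in the punctured interval, each $\tfrac{1}{t-t_0}J_i(t)$ is a scalar multiple of $J_i(t)$, so $\WilkingV(t)$ is spanned by the first summand alone, i.e.\ $K(t)=\{J\in\selfadjointV: J(t)=0\}$ is trivial there. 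That argument uses only the direct-sum part of (a), not the a.e.\ part, so there is no circularity. Second, part (b) is only a sketch. The first-order decomposition of $J'$ you write is correct, but the key content of the lemma is precisely the coefficient $-3$, and the step where you claim ``self-adjointness of $\selfadjointV$ identifies $(\mathbb A_\gamma J^\vp)^\h$ with a multiple of $\mathbb A_\gamma^2 J^\h$'' is not justified and does not look right as stated: $\mathbb A_\gamma J^\vp = ((J^\vp)')^\h$ is not in general a multiple of $\mathbb A_\gamma^2 J^\h$. What self-adjointness of $\selfadjointW$ against $\selfadjointV$ actually gives, together with the skew-symmetry of $\mathbb A_\gamma$ (which you do not establish), is a relation that must be tracked carefully through the Riccati/co-Riccati operators $A$ and $A^*$; the paper handles this by the explicit identifications $\mathbb A_\gamma|_{\WilkingV}=A$, $\mathbb A_\gamma|_{\WilkingH}=-A^*$ at regular instants and then cites the resulting equation from \cite{Gromoll-Walshap}, extending to all $t$ by continuity. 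Without that calculation or citation, the appearance of $-3(\mathbb A_\gamma)^2$ is asserted rather than derived.
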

\begin{proof}
  First, observe that given $t_0\in I$, there exists a neighborhood $U\subset M$ of $\gamma(t_0)$ which admits a geodesic vector field $V$ such that $\gamma(t)\subset U$ and $V_{\gamma(t)}=\dot\gamma(t)$ for all $t\in I_\varepsilon=(t_0-\varepsilon,t_0+\varepsilon)\cap I$ for some $\varepsilon>0$.
	From Proposition \ref{proposition-covariantderivative-finsler}, we conclude that $\selfadjointW$ and $\selfadjointV$ are also self-adjoint spaces of (Riemannian) Jacobi fields
with respect to the Riemannian metric $\hat{g}:=g_{V}$. Since the lemma is already true for self-adjoint spaces of Riemannian Jacobi
fields (see  \cite[Chapter 1]{Gromoll-Walshap}), it is also valid in the Finsler case for $t\in I.$ 
 Indeed, part (a) follows from \cite[Lemma 1.7.1]{Gromoll-Walshap} (for the triviality of the second summand, observe that the interval $I$ can be covered by a countable number of intervals $I_{\varepsilon_i}$ with geodesic vector fields $V_i$, $i\in \N$, as above).  Part (b) follows from the equation above \cite[Eq. (1.7.7)]{Gromoll-Walshap} observing that   
  if  $A(t):\WilkingV(t)\to \WilkingH(t)$ and its dual $A^{*}(t):\WilkingH(t)\to \WilkingV(t)$ are the linear operators
defined below \cite[Eq. (1.7.4)]{Gromoll-Walshap} (which are also naturally defined for Finsler metrics), then $\mathbb{A}_{\gamma}|_{\WilkingH}=-A^{*}$ and 
$\mathbb{A}_\gamma|_{\WilkingV}=A$. 
In fact, when $t_0$ is regular (the second summand in part $(a)$ is trivial),  for every $u\in\WilkingV(t)$, there exists a Jacobi field $J\in \selfadjointV$ such that  $J(t_0)=u$. Then $A(t_0) u=(J ')^\h(t_0)$, but as $J^\vp=J$, it follows that $\mathbb{A}_\gamma|_{\WilkingV(t_0)}=A(t_0)$. Moreover, as 
\begin{multline*}
g_{\dot\gamma}(\mathbb{A}_\gamma|_{\WilkingH(t_0)} (X), Z) =  -g_{\dot\gamma}( X, \mathbb{A}_\gamma|_{\WilkingV(t_0)}(Z)) 
  =g_{\dot\gamma}( X, -A(t_0)(Z))\\= g_{\dot\gamma}(-A^{*}(t_0) (X), Z), 
\end{multline*}
which allows us to conclude that $\mathbb{A}_\gamma|_{\WilkingH(t_0)}=-A^{*}(t_0)$. This implies, taking into account the equation  above \cite[Eq. (1.7.7)]{Gromoll-Walshap}, that Eq. \eqref{eqJacWil2}  holds for every regular instant, and, by continuity, for all $t\in I$, as regular instants are dense by part $(a)$.   We stress that, by Proposition \ref{proposition-covariantderivative-finsler}, the O'Neill tensor of $\gamma$ computed with $g_V$ coincides with that computed with $F$.  Finally, for  part (c), see the proof of \cite[Lemma 1.7.1]{Gromoll-Walshap}. 
\end{proof}

As we will see below, an example of $(n-1)$-self-adjoint space $\selfadjointW$ 
is the space of $L$-Jacobi fields for a submanifold $L$  along a geodesic $\gamma$ which is orthogonal to $L$. 
In the case where $L$ is a fiber of a Finsler submersion
$\pi:M\to B$, $\selfadjointV$ is  the space of  holonomic Jacobi fields, i.e.,
those  Jacobi fields whose $\pi$-projections are zero. Also in this case,  the transversal Jacobi field equation along $\gamma$ 
will be identified with a Jacobi field equation
along a geodesic in $B$ (see  Remark \ref{remark-transverseJacobifield}). 

\begin{lemma}
\label{lemma-W-selfadjoint}
Let $L$ be a submanifold on a  Finsler manifold $(M^{n},F)$ and 
$\gamma:I\subset\RR\rightarrow M$ a geodesic orthogonal to $L$ at $t_0\in I.$ 
Consider $\selfadjointW$ the vector space of $L$-Jacobi fields orthogonal to $\gamma$. 
Then $\selfadjointW$ is  a self-adjoint space   of dimension $n-1$. 
\end{lemma}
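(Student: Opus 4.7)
The plan is to verify both assertions by reducing the defining conditions of an $L$-Jacobi field to linear constraints on the pair $(J(t_0),J'(t_0))\in T_{\gamma(t_0)}M\oplus T_{\gamma(t_0)}M$, and to propagate the orthogonality-to-$\gamma$ condition along $I$ via a first-integral argument.

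First I would show that a Jacobi field $J$ with $g_{\dot\gamma(t_0)}(\dot\gamma(t_0),J(t_0))=0$ and $g_{\dot\gamma(t_0)}(\dot\gamma(t_0),J'(t_0))=0$ is orthogonal to $\gamma$ on all of $I$. The function $t\mapsto g_{\dot\gamma(t)}(\dot\gamma(t),J(t))$ is affine in $t$: differentiating twice and using the Jacobi equation produces $-g_{\dot\gamma}(\dot\gamma,R_{\dot\gamma}J)$, which vanishes because $R_{\dot\gamma}(\dot\gamma)=0$ and $R_{\dot\gamma}$ is $g_{\dot\gamma}$-self-adjoint. Since $\gamma$ is orthogonal to $L$ at $t_0$, the tangency $J(t_0)\in T_{\gamma(t_0)}L$ already supplies the first of these two vanishings. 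With this in hand, the dimension count at $t_0$ is direct: $J(t_0)\in T_{\gamma(t_0)}L$ contributes $\dim L$ parameters; the tangential part $\tan_{\dot\gamma(t_0)}J'(t_0)$ is then forced to equal $\mathcal{S}_{\dot\gamma(t_0)}J(t_0)$; and the $g_{\dot\gamma(t_0)}$-normal part of $J'(t_0)$ ranges in an $(n-\dim L)$-dimensional space subject to the single linear constraint $g_{\dot\gamma(t_0)}(\dot\gamma(t_0),J'(t_0))=0$, since $\dot\gamma(t_0)$ itself lies in that normal space. Adding up, $\dim\selfadjointW=\dim L+(n-\dim L)-1=n-1$.

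Next I would establish self-adjointness using Remark \ref{remark-selfadjoint-at-zero} together with the symmetry of the shape operator recorded right after Definition \ref{definition-eq-L-Jacobi}. It suffices to verify $g_{\dot\gamma}(J_1',J_2)=g_{\dot\gamma}(J_1,J_2')$ at $t_0$ for $J_1,J_2\in\selfadjointW$. Because $J_j(t_0)\in T_{\gamma(t_0)}L$, the $g_{\dot\gamma(t_0)}$-normal components of $J_i'(t_0)$ pair trivially with $J_j(t_0)$, so both sides reduce respectively to $g_{\dot\gamma(t_0)}(\mathcal{S}_{\dot\gamma(t_0)}J_1(t_0),J_2(t_0))$ and $g_{\dot\gamma(t_0)}(J_1(t_0),\mathcal{S}_{\dot\gamma(t_0)}J_2(t_0))$, and these coincide by symmetry of $\mathcal{S}_{\dot\gamma(t_0)}$.

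The most delicate ingredient is the identity $g_{\dot\gamma}(\dot\gamma,R_{\dot\gamma}J)=0$, which in Finsler geometry does not follow from Riemannian-style algebraic curvature symmetries but rather from $R_v(v)=0$ together with the $g_v$-self-adjointness of the Jacobi operator. An alternative, and probably cleaner, route is to pick a local geodesic extension $V$ of $\dot\gamma$ and invoke Proposition \ref{proposition-covariantderivative-finsler}: this identifies $\selfadjointW$ with the classical space of $L$-Jacobi fields for the Riemannian metric $g_V$, where both the dimension count and the self-adjointness are standard textbook facts.
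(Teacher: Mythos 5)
Your argument is correct. The self-adjointness part coincides with the paper's proof (verify at $t_0$ via Remark~\ref{remark-selfadjoint-at-zero}, note the $g_{\dot\gamma(t_0)}$-normal parts of $J_i'(t_0)$ drop out of the pairing since $J_j(t_0)\in T_pL$, and invoke the symmetry of $\mathcal{S}_{\dot\gamma(t_0)}$). The dimension count, however, takes a genuinely different route. The paper first records that the space of all $L$-Jacobi fields has dimension $n$, then decomposes an arbitrary $L$-Jacobi field into its $\gamma$-tangent and $\gamma$-normal parts $J=J^T+J^\perp$, uses that $\gamma$-tangent Jacobi fields have the form $(a_1 t+a_2)\dot\gamma(t)$ together with $J^T(t_0)=0$ to show both parts remain $L$-Jacobi with $\dim\mathcal{J}_L^T(\gamma)=1$, and concludes $\dim\selfadjointW=n-1$ from the resulting splitting. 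You instead show directly that for an $L$-Jacobi field the scalar $f(t)=g_{\dot\gamma(t)}(\dot\gamma(t),J(t))$ is affine (here one should note explicitly that the Cartan-tensor term in the almost-compatibility vanishes because $\gamma$ is geodesic, and that $f''=-g_{\dot\gamma}(\dot\gamma,R_{\dot\gamma}J)=-g_{\dot\gamma}(R_{\dot\gamma}\dot\gamma,J)=0$ uses both $g_{\dot\gamma}$-symmetry of $R_{\dot\gamma}$ and $R_v(v,v)v=0$); thus membership in $\selfadjointW$ is governed by $f(t_0)=0$ (automatic) and $f'(t_0)=g_{\dot\gamma(t_0)}(\dot\gamma(t_0),J'(t_0))=0$, a single nontrivial linear constraint on the $(n-\dim L)$-dimensional free normal component of $J'(t_0)$. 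Both routes are comparable in length; yours counts constraints on initial data and avoids splitting a Jacobi field into two new Jacobi fields, while the paper's makes the one-dimensional $\gamma$-tangent piece explicit.

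One caveat on the alternative you float at the end: Proposition~\ref{proposition-covariantderivative-finsler} identifies geodesics, the induced covariant derivative along $\gamma$, and the Jacobi operator with those of $g_V$, but it does not by itself identify the $L$-Jacobi condition, since that condition involves $\tan_{\dot\gamma(t_0)}\nabla^{\dot\gamma(t_0)}_u\xi$ and, in general, $\nabla^V_u\xi\ne\widehat\nabla_u\xi$. The identification does hold at $p=\gamma(t_0)$, because the difference-tensor contractions reduce to Cartan-tensor terms involving either $\xi_p=V_p$ or $\nabla^{V_p}_{V_p}V=0$, all of which vanish; but that is an extra computation the ``standard textbook facts'' of the Riemannian reduction do not supply on their own.
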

\begin{proof} See  \S \ref{lema34}.
\end{proof}

\begin{proposition}
\label{proposition-jacobifield-variation}
Let $(M,F)$ be a Finsler manifold and $L$ a submanifold of $M$. Given a geodesic $\gamma:I=[a,b]\subset\RR\rightarrow M$ orthogonal to $L$ at the instant $t_0$, we have that a vector field $J$ along $\gamma$ is $L$-Jacobi if and only if it is the variation vector field of a  variation whose longitudinal curves are $L$-orthogonal geodesics. 
\end{proposition}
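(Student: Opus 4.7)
The plan is to establish both implications by linking a geodesic variation with the $L$-Jacobi conditions through torsion-freeness of the Chern connection, and then constructing the variation via the normal exponential map.

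For the $(\Leftarrow)$ direction, suppose $\Gamma:(-\varepsilon,\varepsilon)\times I\to M$ is a variation with each $\Gamma_s(t)=\Gamma(s,t)$ a geodesic, $c(s):=\Gamma(s,t_0)$ in $L$, and $\Gamma_s$ orthogonal to $L$ at $t=t_0$. Then $J:=\partial_s\Gamma|_{s=0}$ is a Jacobi field by the standard variational argument (using $\partial_t\Gamma$ as the reference vector). Clearly $J(t_0)=c'(0)\in T_{\gamma(t_0)}L$, and $\xi(s):=\partial_t\Gamma(s,t_0)$ is an orthogonal vector field along $c$ with $\xi(0)=\dot\gamma(t_0)$. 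Torsion-freeness of the Chern connection yields
\[J'(t_0)=D^{\dot\gamma}_t(\partial_s\Gamma)\big|_{(0,t_0)}=D^{\dot\gamma(t_0)}_s(\partial_t\Gamma)\big|_{(0,t_0)}=\nabla^{\dot\gamma(t_0)}_{c'(0)}\xi,\]
and applying $\mathrm{tan}_{\dot\gamma(t_0)}$ together with the definition of the shape operator recovers the identity $\mathrm{tan}_{\dot\gamma(t_0)}J'(t_0)=\mathcal{S}_{\dot\gamma(t_0)}(J(t_0))$, so $J$ is $L$-Jacobi.

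For the $(\Rightarrow)$ direction, given an $L$-Jacobi field $J$ I would pick any smooth curve $c:(-\varepsilon,\varepsilon)\to L$ with $c(0)=\gamma(t_0)$ and $c'(0)=J(t_0)$, and then a smooth section $\xi$ of the normal cone $\nu L$ along $c$ satisfying $\xi(0)=\dot\gamma(t_0)$ and
\[\nabla^{\dot\gamma(t_0)}_{c'(0)}\xi = J'(t_0).\]
Completeness of $F$ then makes $\Gamma(s,t):=\exp_{c(s)}\bigl((t-t_0)\xi(s)\bigr)$ a smooth variation of $\gamma=\Gamma(0,\cdot)$ by $L$-orthogonal geodesics at $t_0$. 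By the previous direction, $\tilde J:=\partial_s\Gamma|_{s=0}$ is $L$-Jacobi, and by construction $\tilde J(t_0)=c'(0)=J(t_0)$ together with $\tilde J'(t_0)=\nabla^{\dot\gamma(t_0)}_{c'(0)}\xi=J'(t_0)$; hence $\tilde J=J$ by uniqueness of solutions to the second-order Jacobi equation.

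The main obstacle is the existence of such a section $\xi$, since in the Finsler setting $\nu L$ is only a smooth cone bundle rather than a vector bundle. By the very definition of $\mathcal{S}_{\dot\gamma(t_0)}$, the tangential part of $\nabla^{\dot\gamma(t_0)}_{c'(0)}\xi$ is independent of the extension and equals $\mathcal{S}_{\dot\gamma(t_0)}(J(t_0))=\mathrm{tan}_{\dot\gamma(t_0)}J'(t_0)$, so only the $g_{\dot\gamma(t_0)}$-normal component of the target needs to be realized. Differentiating the defining equations $g_v(v,u)=0$ for $u\in T_{c(s)}L$ at $s=0$, the Cartan-tensor contractions against $\dot\gamma(t_0)$ vanish by homogeneity, leaving the short exact sequence
\[0\longrightarrow N\longrightarrow T_{\dot\gamma(t_0)}\nu L\longrightarrow T_{\gamma(t_0)}L\longrightarrow 0,\]
where $N$ denotes the $g_{\dot\gamma(t_0)}$-orthogonal complement of $T_{\gamma(t_0)}L$ inside $T_{\gamma(t_0)}M$. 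Since in coordinates $\nabla^{\dot\gamma(t_0)}_{c'(0)}\xi$ differs from the vertical component $\dot\xi(0)$ only by Christoffel terms fixed by $\xi(0)$ and $c'(0)$, the freedom to vary this vertical component over $N$ is exactly what is needed to prescribe the $g_{\dot\gamma(t_0)}$-normal part of $J'(t_0)$, completing the construction.
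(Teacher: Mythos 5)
Your proof is correct and takes essentially the same approach as the paper: both build the variation from a curve in $TL^\perp$, use torsion-freeness of the Chern connection to relate $V'(t_0)$ to the shape operator, and reduce the converse implication to the map $T_{\dot\gamma(t_0)}TL^\perp\ni \dot N(0)\mapsto (V(t_0),V'(t_0))$ hitting all $L$-Jacobi initial conditions. The only cosmetic difference is that you argue surjectivity directly by identifying the vertical tangent space of $TL^\perp$ with the $g_{\dot\gamma(t_0)}$-orthogonal complement of $T_{\gamma(t_0)}L$, while the paper checks injectivity of the same map and counts dimensions; these are equivalent.
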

\begin{proof} See  \S \ref{prop35}.
 \end{proof}

When $L$ is a fiber of a Finsler submersion and $J$ is an $L$-Jacobi field tangent to the fibers, then $J$ turns out to be
the variation vector field of a variation determined by  end-point maps $\eta^r_{\xi}$, as we will see in the next lemma. 

\begin{lemma}
\label{lemma-variacao-jacobiholonomia}
Consider a Finsler submersion $ \pi:(M,F)\to (B,\tilde F)$,  and set $L=\pi^{-1}(b)$ for some $b\in B$. 
Let  $\gamma:I\rightarrow M$ be a  geodesic orthogonal to $L$ at $t_0\in I$ and $t\to J(t)$ be an $L$-Jacobi field along $\gamma$ with $J(t_{0})$ tangent
to $T_{p}L$ where $p=\gamma(t_{0})$. Also assume that $J$ is $g_{\dot\gamma}$-orthogonal to $\gamma$. Then the following items are equivalent:
\begin{enumerate}
\item[(a)] $J$ is a vertical (holonomy) Jacobi field, i.e., $J$ is always tangent to the fibers of $\pi$. In other words $J^{\h}=0$.
\item[(b)]  There exists a curve $\beta:(-\varepsilon,\varepsilon)\subset \RR\rightarrow L$ 
with $J(t_0)=\dot{\beta}(0)$, such that if $\xi$ is the normal basic vector field along $L$ with $\xi_p=\dot{\gamma}(t_{0})$ and $\psi:I\times(-\varepsilon,\varepsilon)\rightarrow M$ is the variation 
$(t,s)\to \psi(t,s)=\gamma_{\xi_{\beta(s)}}(t-t_0)=\eta^{t-t_{0}}_{\xi}(\beta(s))$,  then
$J(t)=\frac{\partial}{\partial s} \psi(t,0)$.
\end{enumerate}
\end{lemma}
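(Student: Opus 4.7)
\emph{Proof plan.} The strategy is to prove the two implications separately, reducing both to the horizontal-lift property of Finsler submersions and to a uniqueness statement for vertical $L$-Jacobi fields along $\gamma$.

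For $(b)\Rightarrow(a)$, I rely on the fact that $\xi$ is basic: by definition each $\xi_{\beta(s)}$ is the horizontal lift of the fixed vector $\tilde\xi_b:=d\pi(\dot\gamma(t_0))\in T_bB$. Hence every longitudinal curve $t\mapsto \psi(t,s)=\gamma_{\xi_{\beta(s)}}(t-t_0)$ is a horizontal geodesic, and by Proposition~\ref{proposition-duran-geodesics-submersion} projects to a geodesic of $B$ with initial point $b$ and initial velocity $\tilde\xi_b$, both independent of $s$. Uniqueness of geodesics in $B$ then forces $\pi(\psi(t,s))=\pi(\gamma(t))$ for all $(t,s)$, and differentiating in $s$ at $s=0$ yields $d\pi(J(t))=0$, i.e.\ $J^{\h}=0$.

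For $(a)\Rightarrow(b)$, the $L$-Jacobi condition forces $J(t_0)\in T_pL$, so I pick any smooth curve $\beta:(-\varepsilon,\varepsilon)\to L$ with $\beta(0)=p$ and $\dot\beta(0)=J(t_0)$. With $\xi$ the basic normal extension of $\dot\gamma(t_0)$, define $\psi(t,s)=\eta^{t-t_0}_{\xi}(\beta(s))$. By Proposition~\ref{proposition-jacobifield-variation} its variation vector field $\hat J(t):=\partial_s\psi(t,0)$ is an $L$-Jacobi field; by the $(b)\Rightarrow(a)$ argument just proven it is vertical; and clearly $\hat J(t_0)=\dot\beta(0)=J(t_0)$. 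Thus it suffices to show that a vertical $L$-Jacobi field orthogonal to $\gamma$ is uniquely determined by its value at $t_0$, for then $J=\hat J$ and (b) follows.

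To establish this uniqueness, let $J_0$ be a vertical $L$-Jacobi field orthogonal to $\gamma$ with $J_0(t_0)=0$. The $L$-Jacobi condition yields $\mathrm{tan}_{\dot\gamma(t_0)}J_0'(t_0)=\mathcal{S}_{\dot\gamma(t_0)}(0)=0$, so $J_0'(t_0)$ is $g_{\dot\gamma(t_0)}$-orthogonal to $T_pL$, i.e.\ horizontal. To control the horizontal component of $J_0'(t_0)$ I reduce to the Riemannian case: by the transnormality property \eqref{horgeo} of the Finsler submersion $\pi$, the geodesic $\gamma$ is horizontal throughout, so I may choose a geodesic field $V^*$ on a neighborhood of $\pi\circ\gamma$ in $B$ whose horizontal lift $V$ satisfies $V_{\gamma(t)}=\dot\gamma(t)$. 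By Proposition~\ref{redRiesub}, $\pi$ then becomes a Riemannian submersion with respect to $g^F_V$ and $g^{\tilde F}_{V^*}$, and by Proposition~\ref{proposition-covariantderivative-finsler} the Finslerian Jacobi field $J_0$ is also a Riemannian Jacobi field for $g^F_V$, with the same vertical/horizontal decomposition. O'Neill's identity for a vertical field along a horizontal geodesic then gives $(J_0')^{\h}(t_0)=\mathbb{A}_\gamma J_0(t_0)=0$, so $J_0'(t_0)=0$ and hence $J_0\equiv 0$ by uniqueness of Jacobi fields. Applied to the $L$-Jacobi difference $J-\hat J$, this yields $J=\hat J$.

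The main obstacle is precisely this last step: translating the Finslerian vertical $L$-Jacobi condition into a setting where O'Neill's $\mathbb{A}$-tensor identity is directly available. The combined use of Propositions~\ref{proposition-covariantderivative-finsler} and~\ref{redRiesub} is the right bridge, but one must carefully verify that the horizontal lift $V$ genuinely coincides with $\dot\gamma$ along $\gamma$ (using transnormality, to guarantee that $\gamma$ stays horizontal), and that the Finsler and Riemannian notions of horizontal/vertical agree on this neighborhood, which is immediate from the definition of horizontality via $g_{\dot\gamma}$-orthogonality once $V=\dot\gamma$ along $\gamma$.
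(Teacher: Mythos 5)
Your proof is correct, and the direction $(b)\Rightarrow(a)$ is essentially the paper's argument: the longitudinal curves of $\psi$ are horizontal geodesics that all project to the same geodesic in $B$ (because $\xi$ is basic), so the variation field is vertical. For $(a)\Rightarrow(b)$, you and the paper both reduce to the same uniqueness statement — a vertical $L$-Jacobi field $J_0$ with $J_0(t_0)=0$ must vanish identically — but you establish it by a genuinely different route. You observe that the $L$-Jacobi condition makes $J_0'(t_0)$ \emph{horizontal} (since $\mathrm{tan}_{\dot\gamma(t_0)}J_0'(t_0)=\mathcal S_{\dot\gamma(t_0)}(0)=0$) and then kill the horizontal component via the Riemannian reduction of Propositions~\ref{redRiesub} and~\ref{proposition-covariantderivative-finsler} together with the O'Neill identity $(J_0')^{\h}(t_0)=\mathbb A_\gamma J_0(t_0)=0$, which exploits the tensoriality of $\mathbb A_\gamma$ on vertical fields. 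The paper argues from the other side: since $J_0(t_0)=0$, one has $J_0'(t_0)=\lim_{t\to t_0}\frac{1}{t-t_0}J_0(t)$, and each $\frac{1}{t-t_0}J_0(t)$ is tangent to the fiber $L_{\gamma(t)}$, so the limit lies in $T_pL$, i.e.\ $J_0'(t_0)$ is \emph{vertical}; the $L$-Jacobi condition then annihilates its tangential part, giving $J_0'(t_0)=0$ directly. Both are valid, but the paper's version is more elementary: it uses only the closedness of the vertical subbundle of $TM$ and needs neither the passage to the Riemannian metric $g_V$ nor the O'Neill tensor (whose tensoriality along $\gamma$ is itself justified in the paper via the same Riemannian reduction you invoke, so your route is in a sense doubling up on that machinery).
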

\begin{proof}  See  \S \ref{lema36}.
\end{proof}

\begin{definition} 
\label{definition-wilkings-distribution-transverse-jacobi}
Let $\mathcal F$ be a partition by stratified submanifolds of $M$ such that there exists
 an open neighborhood $U\subset M$ where the partition $\mathcal F|_{U}$ is a regular Finsler foliation of $U$.  Given a geodesic $\gamma:I\subseteq \RR\rightarrow M$ and $t_0\in I$ such that $\gamma(t_0)\in U$ and  $\dot\gamma(t_0)\in \nu(L_{\gamma(t_0)})$, denoting by $L_p$  the fiber of $\mathcal F|_{U}$ which contains $p\in U$,
let  $\selfadjointW$ be the self-adjoint space of $L_{\gamma(t_0)}$-Jacobi fields orthogonal to $\gamma$ defined in  Lemma \ref{lemma-W-selfadjoint}
and  $\selfadjointV$ the subspace of  $\selfadjointW$ such that $J \in \selfadjointV$ iff $J(t)\in T_{\gamma(t)}L_{\gamma(t)}$
 for all $t\in (t_{0}-\varepsilon, t_{0}+\varepsilon)$, where $\varepsilon>0$ is small enough  to guarantee that 
$\gamma(t_0-\varepsilon,t_0+\varepsilon)\subset U$.    
Then the distributions $t\to \WilkingV (t)$, $t\to \WilkingH (t)$ and  the differential equation  \eqref{eqJacWil2}  
associated with the self-adjoint spaces $\selfadjointW$ and $\selfadjointV$ 
are called  \emph{the Wilking's distributions and the transversal Jacobi field equation along $\gamma$  starting at} $ \{\gamma(t_0),L_{\gamma(t_{0})}\}$. 
\end{definition}
 

\begin{figure}[hhtb]
\includegraphics[scale=0.4]{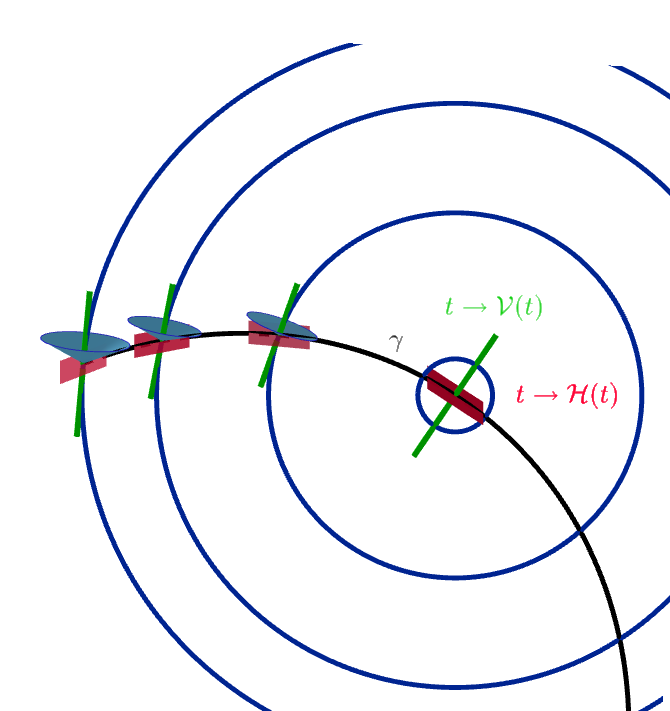}
\caption{\small{ Figure generated by the software geogebra.org, illustrating the Wilking's distributions
$t\to \mathcal{H}(t)$ and $t\to \mathcal{V}(t)$ associated with the singular Finsler
foliation $\F_{\pi}=\{\pi^{-1}(c)\}$ on the neighborhood   $B_{\frac{3}{2}}(0)\subset \mathbb{R}^{3}$ where $\pi:\mathbb{R}^{3}\to\mathbb{R}^{2}$ is defined as
$\pi(x)=(x_1^{2}+x_{2}^{2},x_3).$ The Finsler metric  $F$ restricted to  $B_{3/2}(0)$ 
is the Randers metric  having the Euclidean metric and  the wind $W=(-\frac{x_2}{2},\frac{x_1}{2},0)$ as Zermelo data. 
}} 
\end{figure}

 

    


\begin{remark}
Note that even when $\F$ is a singular foliation, the distributions 
$t\to \WilkingV (t)$, $t\to \WilkingH (t)$ and the differential equation \eqref{eqJacWil2} 
 are well defined for all $t$ including the case where $\gamma(t)$ is a singular point. 
Also note that the transversal Jacobi field equation \eqref{eqJacWil2} 
is a Morse-Sturm system; see \cite{GMPiccioneTausk,Lytchak-Jacobifield,LytchakThorbergsson1,PaoloDaniel}. 
 Although  the distribution $\WilkingV (t)$ coincides with the tangent space
of $L_{\gamma(t)}$  until $\gamma$  crosses a singular leaf,  it does not follow straightforwardly that $\WilkingV (t)$ should  always be tangent to the  regular leaves for all $t$ (even at regular leaves after crossing the first singular leaf).  
This is the case  when $\F$ is the foliation
studied in \S \ref{section-singular-analytic-submersion}  obtained as the fibers of an analytic Finsler submersion. 
\end{remark}


\begin{remark}
\label{remark-transverseJacobifield}
By applying Proposition \ref{proposition-jacobifield-variation} we can infer  useful  natural interpretations
of the transversal Jacobi field equation.
Let $\pi: M\to B$ be a Finsler submersion and $L$ a fiber, i.e., $L=\pi^{-1}(b)$, for any $b\in B$. Let $p\in L$ and $\gamma_p$ be a horizontal
geodesic starting at $p$, i.e., $p=\gamma_{p}(0)$. Then the  
transversal Jacobi  equation along $\gamma_{p}$  starting at $L_{\gamma_p(0)}$ can be identified with the Jacobi field equation
along the geodesic $\pi\circ \gamma_{p}$ in $B$.    
Indeed, the transversal Jacobi equation is the horizontal lift to $M$ of the Jacobi equation on $B$. To check this, consider a geodesic vector field $V$ whose integral curves are horizontal geodesics of $(M,F)$, being $V^*=d\pi\circ V$, and recall 
that, by Proposition \ref{redRiesub}, $\pi:(M,g_V)\rightarrow (B,\tilde g_{V^*})$ is a Riemannian submersion. Then use the relation between the Levi-Civita connection of $(B,\tilde g_{V^*})$ and the horizontal part of the Levi-Civita connection of $(M,g_V)$ when applied to basic vector fields (see \cite[Lemma 1 (3)]{Oneill66}) and the relation between the horizontal part of their curvature tensors in \cite[Theorem 2 \{4\}]{Oneill66} taking into account Proposition \ref{proposition-covariantderivative-finsler}.
In addition, if $x\in L$ and $\gamma_x$ is a horizontal geodesic starting at $x$
so that $\pi\circ \gamma_{x}=\pi\circ \gamma_{p}$, then the transversal Jacobi field equation along $\gamma_{p}$  starting at $L_{p}$
and the transversal Jacobi field equation along $\gamma_{x}$  starting at $L_{x}$ can be identified with each other.
As we will see from Lemmas \ref{SubAnal-lemma1} and \ref{SubAnal-lemma2}, this interpretation will also hold for 
analytic singular Finsler submersions. 

\end{remark}

\section{Proof of Theorem \ref{SubAnal}}
\label{section-singular-analytic-submersion}

In this section we prove Theorem \ref{SubAnal} adapting an argument of \cite{Alexandrino-IRSS} and using Wilking's distributions, recall
 Definition \ref{definition-wilkings-distribution-transverse-jacobi}.  Let $\pi
:M^n\to B^k$ be an analytic map. In the first subsection, assuming $\pi$ is a Finsler submersion on an open subset of $M$, we  prove that $\pi$ is a Finsler submersion on  the regular part  $M_0$ and the regular fibers are equifocal submanifolds. In the second part, we will assume that the fibers constitute a singular  foliation and will prove the transnormality.

\subsection{Equifocality}

 Let $q\in M$ be a regular point of $\pi$, $c=\pi(q)$ and  $L=\pi^{-1}(c)$. We will consider a unit basic vector field  $\xi$ along a neighborhood of $q$ in  $L$,  i.e., 
$\xi$ is orthogonal to $L$,  projectible and $F(\xi)=1$. 
 We will show that $L$ is a regular fiber of $\pi$ by extending this basic vector field along $L$ (Lemma \ref{firstapproach})  and will check  part $(b)$ of Theorem \ref{SubAnal}. 
More precisely,   in Lemma \ref{SubAnal-lemma1}, we will prove that $\eta^r_\xi(L)$ is contained in a fiber of $\pi$ and in
 Lemmas \ref{SubAnal-lemma3} and  \ref{lemma-equifocality}, that $d\eta^r_\xi$ has constant rank for all $r$.

\begin{lemma}\label{firstapproach}
	If $p\in M$ is a regular point of $\pi$ and there exists a neighborhood $U$ of $p$ in (the regular part of)  $L_p$ such that $F$ is constant on basic vector fields on $U$, then $L_p$ is regular and $F$ is constant on basic vector fields on $L_p$.
\end{lemma}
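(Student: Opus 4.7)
The plan is to identify a maximal subset of $L_p$ where both conclusions hold and use the connectedness of $L_p$ together with analyticity to force it to be all of $L_p$. Let $L_p^{\mathrm{reg}}$ denote the regular set of $\pi$ inside $L_p$, and let $C_0$ be the connected component of the analytic manifold $L_p^{\mathrm{reg}}$ containing $p$.

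\emph{First,} extend the $F$-constancy hypothesis from $U$ to all of $C_0$. On $L_p^{\mathrm{reg}}$, for each $v \in T_c B$ with $c = \pi(p)$, there is a unique horizontal lift $q \mapsto \xi^v(q) \in \nu_q L_p$ satisfying $d\pi_q(\xi^v(q)) = v$; analyticity of this lift follows from the implicit function theorem applied to the normality condition, using the positive definiteness of the fundamental tensor. The map $q \mapsto F(\xi^v(q))$ is therefore analytic on $L_p^{\mathrm{reg}}$, and by hypothesis it takes the constant value $\tilde F(v) := F(\xi^v(p))$ on the open set $U \subset C_0$. Analytic continuation on the connected analytic manifold $C_0$ forces $F(\xi^v(\cdot)) \equiv \tilde F(v)$ throughout $C_0$.

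\emph{Second,} show that $C_0$ is closed in $L_p$. Given $q$ in the closure of $C_0$ in $L_p$, pick $q_n \in C_0$ with $q_n \to q$. For each $v \in T_c B$, $F(\xi^v(q_n)) = \tilde F(v)$, so within a compact coordinate chart about $q$ the sequence $\xi^v(q_n)$ is bounded; extracting a subsequence gives $\xi^v(q_n) \to \eta^v \in T_q M$, and continuity of $d\pi$ yields $d\pi_q(\eta^v) = v$. Varying $v$ shows $d\pi_q$ is surjective, so $q \in L_p^{\mathrm{reg}}$; since $L_p^{\mathrm{reg}}$ is open in $L_p$ and the connected component of $q$ in $L_p^{\mathrm{reg}}$ contains nearby $q_n \in C_0$, this component coincides with $C_0$, whence $q \in C_0$. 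Consequently $C_0$ is both open and closed in the connected set $L_p$, so $C_0 = L_p$; this yields simultaneously that every point of $L_p$ is regular for $\pi$ and that $F$ is constant on every basic vector field on $L_p$.

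\emph{The main obstacle} is the analyticity claim underlying the first step: even though $\pi$ and $F$ are analytic, basic vector fields are defined by a Finsler-orthogonality condition that is nonlinear in the unknown through the fundamental tensor $g$, so one must verify carefully that the horizontal lifts $\xi^v$ depend analytically on the base point before invoking analytic continuation. A secondary subtlety is ensuring the subsequence argument in the second step is performed uniformly in $v$, which is handled by fixing a basis of $T_cB$ and using the local equivalence between $F$-norm and a Euclidean chart norm on a precompact neighborhood of $q$.
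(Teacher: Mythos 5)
Your proof is correct and takes essentially the same route as the paper's: show $F\circ\xi$ is analytic on the connected component of $p$ in the regular part, so analytic continuation makes it constant there, then use this constancy to establish precompactness of the basic vectors near a boundary point and conclude that boundary point is also regular, yielding an open-and-closed argument. You spell out two points the paper leaves tacit — why the basic lift $\xi^v$ depends analytically on the basepoint (via the implicit function theorem applied to the orthogonality condition, using strong convexity of $g$), and why the argument closes up into $C_0=L_p$ — but the substance is identical.
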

\begin{proof}
Let us denote by $L^R_p$ the connected component of $p$ of this regular part (which is open)  and $\xi$ a basic vector field on $L^R_p$, which is analytic as all the data is analytic. 
  Then as, by hypothesis,  the function $F\circ \xi:L^R_p\rightarrow \RR$ is constant in a neighborhood of $p$, 
	it must be constant everywhere. Now consider a point $q$ in the boundary of $L^R_p$. As $F\circ \xi$ is constant,  if one considers a sequence of points $\{q_n\}\subset L_p^R$ such that $\lim_{n\to \infty}q_n=q$, then the sequence of vectors $\xi_{q_n}$ is precompact in $TM$ and it converges to some $v_q\in T_qM$ up to a subsequence.  Let $\tilde \xi=d\pi(\xi)$, which is well-defined because $\xi$ is basic.   By continuity, the vector $v_q$  projects to $\tilde\xi$, and since this can be done for every $\tilde\xi\in T_{\pi(p)}B$  (with the corresponding basic lift),  it turns out that the points of the boundary are regular and   $F$ is constant on $\xi$. 
\end{proof}
\begin{remark}\label{saturated}
 As a consequence of Lemma \ref{firstapproach},  the assumption that $\F_\pi$ is a Finsler foliation restricted to some open subset $U\subset M$ implies that there exists a saturated open subset where $\pi$ is a Finsler submersion. Namely, consider $\pi(U)$, which is open because $\pi$ is a submersion and then $\pi|_{\pi^{-1}(\pi(U))}$ is a Finsler submersion for a certain Finsler metric on $\pi(U)$. 
\end{remark}

	\begin{lemma}
	\label{SubAnal-lemma1}
	For all $r\in\RR$, there exists a value $d\in B$ such that $\eta^r_{ \xi}(L)\subset \pi^{-1}(d)$. 
	\end{lemma}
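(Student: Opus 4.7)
The plan is to first establish the conclusion locally near $L$ using the Finsler submersion structure, then upgrade it to all $r\in\RR$ by analytic continuation, and finally use the connectedness of $L$ to get a global statement.

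First I would invoke Remark \ref{saturated} to obtain a saturated open neighborhood $\widetilde U$ of $L$ on which $\pi|_{\widetilde U}$ is a Finsler submersion onto some Finsler manifold. Because $\xi$ is a unit basic vector field along $L=\pi^{-1}(c)$ and $\pi(L)=\{c\}$ is a single point, the projected vector $\tilde\xi:=d\pi(\xi)\in T_cB$ is one and the same for every $p\in L$. By Proposition \ref{proposition-duran-geodesics-submersion}, as long as the horizontal geodesic $\gamma_{\xi_p}$ remains inside $\widetilde U$, it projects to the fixed geodesic $\tilde\gamma$ in $B$ with $\dot{\tilde\gamma}(0)=\tilde\xi$. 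Consequently, for every $p_0\in L$ there exist a neighborhood $U'\subset L$ of $p_0$ and an $\varepsilon>0$ such that the map
\[ H\colon L\times\RR\to B,\qquad H(p,r):=\pi(\eta^r_\xi(p)), \]
well defined for all $(p,r)$ by completeness of $F$, satisfies $H(p,r)=\tilde\gamma(r)$ on $U'\times(-\varepsilon,\varepsilon)$; in particular $H$ is locally independent of $p$ near $(p_0,0)$.

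Next I would extend this local triviality in $p$ to all values of $r$ via real analyticity. Since $F$, $\pi$, and the horizontal distribution are analytic, so is the map $H$. Fix $p_0\in L$ and an analytic curve $\alpha\colon(-\delta,\delta)\to L$ with $\alpha(0)=p_0$, and consider the two analytic maps $\phi_1,\phi_2\colon(-\delta,\delta)\times\RR\to B$ defined by $\phi_1(s,r)=H(\alpha(s),r)$ and $\phi_2(s,r)=H(p_0,r)$. The previous paragraph gives $\phi_1\equiv\phi_2$ on the nonempty open set where $\alpha(s)\in U'$ and $|r|<\varepsilon$. By the identity principle for real analytic maps into a manifold, applied on the connected domain $(-\delta,\delta)\times\RR$, we get $\phi_1\equiv\phi_2$ everywhere, i.e.\ $H(\alpha(s),r)=H(p_0,r)$ for all admissible $(s,r)$.

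Letting $\alpha$ range over all analytic curves in $L$ through $p_0$ shows that $H(\cdot,r)$ is locally constant on $L$ for every fixed $r$. By the hypothesis of Theorem \ref{SubAnal} that all fibres $\pi^{-1}(d)$ are connected, $L=\pi^{-1}(c)$ is connected, so $H(\cdot,r)$ is in fact constant on $L$; setting $d:=H(p_0,r)$ yields $\eta^r_\xi(L)\subset\pi^{-1}(d)$, as required. The main technical obstacle is justifying the identity-principle step in a manifold-valued setting: I would handle it by covering the curve $r\mapsto H(p_0,r)$ on each compact interval by finitely many coordinate charts on $B$, and propagating the vanishing of the coordinate-wise difference $\phi_1-\phi_2$ from one chart to the next along this curve.
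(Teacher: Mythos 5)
Your argument is correct and follows essentially the same two-step strategy as the paper's own proof: obtain local agreement of the projected horizontal geodesics near $r=0$ from the Finsler-submersion structure on the saturated neighborhood (Remark \ref{saturated} and Proposition \ref{proposition-duran-geodesics-submersion}), and then propagate it to all $r$ by analyticity. The paper phrases the second step as an open–closed argument on $[0,r]$ applied to $\tilde I=\{t_0\in[0,r]: \pi\circ\gamma_{\xi_x}(t)=\pi\circ\gamma_{\xi_y}(t)\text{ for }t\le t_0\}$ for a fixed pair $x,y\in L$, which is a more compact packaging of your invocation of the real-analytic identity principle on the product $(-\delta,\delta)\times\RR$.
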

	\begin{proof}
 Assume that $r>0$ (the case $r<0$ is analogous and $r=0$ is trivial).  Consider $x,y\in L$ and set $\tilde I:=\{ t_{0}\in [0,r]\}$ such that $\pi(\gamma_{\xi_x}(t))= \pi(\gamma_{\xi_y}(t)),$ for $t\leq t_{0} \}$.
	Note that the set $\tilde I$ is not empty, because $\pi$  restricted to a neighborhood of  $L$ is a Finsler submersion  (recall Remark \ref{saturated}).  The set $\tilde I$ is closed due to continuity and it is an open set because $\pi$ is an analytic map and geodesics of an analytic Finsler metric are also analytic. Therefore $\tilde I=[0,r]$ and this concludes the proof.
	\end{proof}

Fixed $x\in L$,  and consider the geodesic $\gamma_{\xi_x}$. Since,   by Remark \ref{saturated}, $\pi$ is a Finsler submersion   on a neighborhood of $x$,  we can define along the 
geodesic $\gamma_{\xi_x}$ the Wilking's distribution pair  $t\to (\WilkingH(t),\WilkingV(t))$  (recall Definition \ref{definition-wilkings-distribution-transverse-jacobi}).

\begin{lemma}\label{lemma:V=L}
 If $x\in L$ and  $p=\gamma_{\xi_x}(t_0)$ is a regular point of $\pi$, then the Wilking's distribution $\mathcal V(t_0)$ coincides with the tangent space to $L_{p}$. Moreover, the distributions $t\rightarrow \mathcal V(t)$ and $t\rightarrow \mathcal H(t)$
are analytic and the singular points of $\gamma_{\xi_x}$ (lying in a singular level set) are isolated.
\end{lemma}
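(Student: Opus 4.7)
The plan is to combine Lemma \ref{SubAnal-lemma1} with the analyticity of $\pi$ and of the geodesic flow of $F$, together with the variational description of $\mathbf{V}$ provided by Lemma \ref{lemma-variacao-jacobiholonomia}. I would first dispose of the claim that the singular instants of $\gamma = \gamma_{\xi_x}$ are isolated. In any analytic chart, the failure of $d\pi_q$ to be surjective is encoded by the simultaneous vanishing of all $k\times k$ minors of its Jacobian, and hence by a single analytic function $f$ on $M$ (for instance the sum of squares of these minors). Because $\gamma$ is the integral curve of the analytic geodesic spray of $F$, it is analytic, so $f\circ\gamma$ is an analytic function of $t$; since $x = \gamma(0)\in L$ and $L$ is regular by Lemma \ref{firstapproach}, $(f\circ\gamma)(0) > 0$, so $f\circ\gamma$ is not identically zero and its zero set is discrete.

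For the identification $\mathcal{V}(t_0) = T_p L_p$, fix a basis $J_1,\ldots,J_r$ of $\mathbf{V}$. By Lemma \ref{lemma-variacao-jacobiholonomia}, applied in the saturated open set on which $\pi$ is a Finsler submersion (Remark \ref{saturated}), each $J_i$ admits the representation $J_i(t) = \tfrac{\partial}{\partial s}|_{s=0}\,\eta_\xi^t(\beta_i(s))$ for some curve $\beta_i\subset L$ with $\dot\beta_i(0) = J_i(0)$; by uniqueness of Jacobi fields, this representation extends to all $t$ where both sides are defined. Applying Lemma \ref{SubAnal-lemma1} to each fixed $s$ gives $\eta_\xi^t(\beta_i(s)) \in \pi^{-1}(\pi(\gamma(t)))$, so whenever $\gamma(t)$ is regular, continuity in $s$ together with the connectedness of the fibers (assumed in Theorem \ref{SubAnal}) forces $\eta_\xi^t(\beta_i(s))\in L_{\gamma(t)}$; differentiating at $s=0$ gives $J_i(t)\in T_{\gamma(t)} L_{\gamma(t)} = \ker d\pi_{\gamma(t)}$ at every regular instant. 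Consequently the first summand $\{J(t_0):J\in\mathbf{V}\}$ of $\mathcal{V}(t_0)$ lies in $T_pL_p$ directly, while for the second summand Lemma \ref{lemma-dimVconstant}(c) identifies $J'(t_0) = \lim_{t\to t_0} J(t)/(t-t_0)$, a limit of vectors in the analytic distribution $\ker d\pi$, which is defined and varies analytically on a punctured neighborhood of $t_0$ by the first paragraph; hence $J'(t_0)\in \ker d\pi_p = T_pL_p$ as well. Finally, Lemma \ref{lemma-W-selfadjoint} together with Lemma \ref{lemma-variacao-jacobiholonomia} gives a bijection $\mathbf{V}\to T_xL$, $J\mapsto J(0)$, so $\dim\mathcal{V}(t_0) = \dim\mathbf{V} = \dim L = \dim L_p$ by Lemma \ref{lemma-dimVconstant}(a) and the regularity of $p$; combined with the inclusion, this yields equality.

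The analyticity of the distributions follows from the same renormalization: near any $t_0\in\RR$, Lemma \ref{lemma-dimVconstant}(c) provides a continuous local frame of $\mathcal{V}(t)$ built from the (analytic) Jacobi fields $J_i$ and, for those $J_i$ with $J_i(t_0)=0$, the analytic quotients $J_i(t)/(t-t_0)$, which extend analytically across $t_0$ since $J_i$ is analytic with a zero there; the complement $\mathcal{H}(t)$ inherits analyticity as the $g_{\dot\gamma(t)}$-orthogonal complement, the fundamental tensor being analytic in $t$ along the analytic curve $\gamma$. The main obstacle in my view is the treatment of the second summand of $\mathcal{V}(t_0)$ at a regular point $p$ of $\pi$ that is nonetheless an $L$-focal point of $\gamma$; the isolated-singular-point step is precisely what allows one to read $J'(t_0)$ as a limit within $\ker d\pi$, so establishing that step first is essential to the argument.
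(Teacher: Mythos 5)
Your argument is correct, and the identification $\mathcal{V}(t_0)=T_pL_p$ together with the analyticity of $\mathcal V$ is obtained by essentially the same route as the paper (the continuous/analytic basis of Lemma \ref{lemma-dimVconstant}(c), tangency via Lemmas \ref{lemma-variacao-jacobiholonomia} and \ref{SubAnal-lemma1}, and a dimension count). The genuine divergence is in the proof that the singular instants are isolated. The paper proves this \emph{after} the identification, using the Wilking distribution itself: it takes a $D^{\mathfrak h}$-parallel analytic frame $H_1,\dots,H_{n-r}$ of $\mathcal H$, pushes it down by $d\pi$, and compares it to an analytic frame along $\pi\circ\gamma_{\xi_x}$, the analytic determinant of the transition matrix being nonzero near $t_0$ and hence having isolated zeros, which dominate the singular instants. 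You instead prove it \emph{first} and independently of the Wilking machinery, by observing that the singular locus is the zero set of a single analytic function (the sum of squares of the $k\times k$ minors of the Jacobian of $\pi$) and composing with the analytic geodesic. Your version is more elementary, self-contained, and decouples the isolated-singular-points claim from the rest of the argument, which is a genuine simplification; the paper's version has the side benefit of exhibiting the singular instants as zeros of a determinant built from the transversal frame, which fits with the transversal-Jacobi viewpoint used elsewhere. Two small remarks: the bijection $\selfadjointV\to T_xL$, $J\mapsto J(0)$, comes from Lemma \ref{lemma-variacao-jacobiholonomia} and the uniqueness of vertical $L$-Jacobi fields proved there, not from Lemma \ref{lemma-W-selfadjoint}; and the isolated-singular-points step is not actually needed to handle the second summand of $\mathcal V(t_0)$ at a regular $t_0$, since openness of the set of regular instants already provides an unpunctured interval of regular instants around $t_0$ in which $\ker d\pi$ is an analytic rank-$m$ subbundle.
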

\begin{proof}
Recall by part (c) of Lemma \ref{lemma-dimVconstant} that there is a basis $J_1,\ldots,J_{r}$ of  
$\selfadjointV$ with $r=\dim \selfadjointV$ such that at $t_0$, $J_k(t_0)=\ldots=J_r(t_0)=0$ and $J_i(t_0)\not=0$ for $i=1,\ldots,k-1$. Then 
\begin{equation}\label{Js}
J_1(t),\ldots,J_{k-1}(t),\frac{1}{t-t_0}J_{k}(t),\ldots, \frac{1}{t-t_0}J_{r}(t)
\end{equation}
 is a continuous basis of $\WilkingV(t)$ for every $t$ in  $(t_0-\varepsilon,t_0+\varepsilon)$  for some $\varepsilon>0$, with 
$\lim_{t\rightarrow t_0} \frac{1}{t-t_0}J_{i}(t)=J_i'(t_0)$ for all $i=k,\ldots,r$. By Lemma \ref{lemma-variacao-jacobiholonomia},  
$J_i(t)=d\eta^t_{\xi}(v_i)$ for some vector $v_i\in  T_{x}L $ for all $i=1,\ldots,r$, and  by Lemma \ref{SubAnal-lemma1}, all $J_i(t)$ are tangent to $L_{\gamma_{\xi_x}(t)}$ for all $t$.  By continuity,  $J'_j(t)$ is also tangent to $L_{\gamma_{\xi_x}(t)}$  for $j=k,\ldots,r$. 
This implies that $\WilkingV(t_0)$ is contained in 
 the tangent space to $L_{p}$. As they have the same dimension, they coincide. The analyticity of $\WilkingV$ and $\WilkingH$ 
follows from the basis of $\WilkingV(t)$ in \eqref{Js}, as the vector fields $\tilde{J}_j(t)=\frac{1}{t-t_0}J_{j}(t)$ extend analytically to $t_0$ making $\tilde{J}_j(t_0)=J_j'(t_0)$ as $J_j(t_0)=0$ and $J_j$ is analytic, for $j=k,\ldots,r$.  
 Up to the singular points of $\gamma_{\xi_x}$,  consider an analytic frame of the horizontal space $\WilkingH$, namely, analytic vector fields along $\gamma_{\xi_x}$, $H_1,\ldots,H_{n-r}$ such that 
 $H_1(t),\ldots,H_{n-r}(t)$ is a basis of $\WilkingH(t)$ for every $t$ (this can be obtained using the $D^\h$-parallel transport). Then when chosen an analytic frame along 
 $\pi\circ \gamma_{\xi_x}$, $E_1,\ldots,E_{n-r}$, the determinant of the transformation matrix  with respect to $d\pi\circ H_1,\ldots,d\pi\circ H_{n-r}$  can only have isolated zeroes or being zero everywhere. As it is not zero close to $t_0$, it has only isolated zeroes. Even if one cannot ensure the existence of a global frame for $\pi\circ \gamma_{\xi_x}$, it is enough to recover its domain with frames which intersect in open subsets to conclude that the singular points along $\gamma_{\xi_x}$ are isolated.  Observe that, by 
 Lemma~\ref{firstapproach}, the singular points of $\pi$ can only lie in singular levels. 

\end{proof}

\begin{lemma}
	\label{SubAnal-lemma2}
	If $p=\gamma_{\xi_x}(t)$ is a regular point of $\pi$, then $\gamma_{\xi_x}$ is orthogonal to $L_{p}$.   
\end{lemma}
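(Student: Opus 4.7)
Since $\xi$ is a basic unit vector field normal to $L$, we have $\dot\gamma_{\xi_x}(0)=\xi_x\in\nu_xL$, and by Remark~\ref{saturated} the fiber $L$ lies in an open set on which $\pi$ is a Finsler submersion. Hence the Wilking pair $t\mapsto(\WilkingH(t),\WilkingV(t))$ of Definition~\ref{definition-wilkings-distribution-transverse-jacobi} is well defined along $\gamma_{\xi_x}$, starting at $\{x,L\}$, and Lemma~\ref{lemma-W-selfadjoint} furnishes the $(n-1)$-dimensional self-adjoint space $\selfadjointW$ of $L$-Jacobi fields $g_{\dot\gamma}$-orthogonal to $\dot\gamma_{\xi_x}$ used to build it.

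\textbf{Plan.} My plan is to show that $\dot\gamma_{\xi_x}(t)\in\WilkingH(t)$ for every $t$ in the domain and then apply Lemma~\ref{lemma:V=L} at the regular point $p$. Equivalently, I need $g_{\dot\gamma(t)}(\dot\gamma(t),u)=0$ for every $u\in\WilkingV(t)$. By Lemma~\ref{lemma-dimVconstant}(c), $\WilkingV(t)$ is spanned by values $J(t)$ of Jacobi fields in $\selfadjointV\subset\selfadjointW$ together with derivatives $J'(t_0)$ at instants where $J(t_0)=0$, so it suffices to check the two cases. The first case, $u=J(t)$, is immediate from the very definition of $\selfadjointW$. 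The second case requires a short Finsler-specific argument: pick a geodesic vector field $V$ extending $\dot\gamma_{\xi_x}$ near $t_0$ and set $\hat g=g_V$; by Proposition~\ref{proposition-covariantderivative-finsler} the Chern covariant derivative along integral curves of $V$ coincides with the Levi-Civita derivative of $\hat g$. Consequently
\begin{equation*}
0=\frac{d}{dt}\Big|_{t_0}g_{\dot\gamma(t)}(\dot\gamma(t),J(t))=\hat g(\widehat\nabla_VV,J)(t_0)+\hat g(V,\widehat\nabla_VJ)(t_0)=g_{\dot\gamma(t_0)}(\dot\gamma(t_0),J'(t_0)),
\end{equation*}
using that $V$ is geodesic. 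This handles the second case and shows $\dot\gamma_{\xi_x}(t)\perp\WilkingV(t)$.

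\textbf{Conclusion and obstacle.} At the regular point $p=\gamma_{\xi_x}(t)$, Lemma~\ref{lemma:V=L} identifies $\WilkingV(t)=T_pL_p$, and combined with the previous step this yields $g_{\dot\gamma_{\xi_x}(t)}(\dot\gamma_{\xi_x}(t),v)=0$ for all $v\in T_pL_p$, which is exactly the Finsler orthogonality statement. The only mildly delicate point, and what I consider the main obstacle, is the second case above: in Riemannian geometry one simply differentiates $g(\dot\gamma,J)=0$, but in the Finsler setting the almost-$g$-compatibility of the Chern connection introduces a Cartan correction term, and one must pass to a geodesic reference field $V$ via Proposition~\ref{proposition-covariantderivative-finsler} to eliminate it. Everything else is bookkeeping inherited from the machinery developed in Section~\ref{section-wilkingsdistribution}.
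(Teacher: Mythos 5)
Your proof is correct and follows the same overall plan as the paper: show $\dot\gamma_{\xi_x}(t)$ is $g_{\dot\gamma}$-orthogonal to the Wilking vertical distribution $\WilkingV(t)$ and then apply Lemma~\ref{lemma:V=L} to identify $\WilkingV(t)$ with $T_pL_p$ at regular points. The difference lies in how the orthogonality to $\WilkingV$ is established. The paper introduces the auxiliary functions $f_i(t)=g_{\dot\gamma}(\dot\gamma(t),J_i(t))$ and $h_i(t)=g_{\dot\gamma}(\dot\gamma(t),J'_i(t))$, notes they are analytic, shows they vanish near the initial instant (where $\pi$ is a Finsler submersion by Remark~\ref{saturated}), and then invokes analyticity to conclude they vanish for all $t$. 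You instead observe that $f_i\equiv 0$ holds tautologically since $\selfadjointV\subset\selfadjointW$ and $\selfadjointW$ consists of fields $g_{\dot\gamma}$-orthogonal to $\gamma$ by definition, and then you obtain $h_i\equiv 0$ by differentiating this identity with respect to a local geodesic reference field $V$ via Proposition~\ref{proposition-covariantderivative-finsler} (so $\widehat\nabla_VV=0$ kills the Cartan correction). Your route is more elementary: it bypasses the analyticity argument entirely for this step, replacing it with a pointwise differential identity valid on the whole domain. That is a genuine (if modest) improvement in transparency, while the paper's version is perhaps more in keeping with the analyticity-driven machinery pervading the rest of Section~\ref{section-singular-analytic-submersion}.
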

\begin{proof}
	
	Observe that $f_i(t)=g_{\dot\gamma_{\xi_x}}(\dot\gamma_{\xi_x}(t),J_i(t))$  and $h_i(t)=g_{\dot\gamma_{\xi_x}}(\dot\gamma_{\xi_x}(t),J'_i(t))$ are   analytic  functions   for each $J_i\in \selfadjointV$  in the basis of part (c) of Lemma \ref{lemma-dimVconstant},  and if  $\gamma_{\xi_x}(t)$ is a regular point of $\pi$,  then,  by Lemma \ref{lemma:V=L},  $T_{\gamma_{\xi_x}(t)}$  $L_{\gamma_{\xi_x}(t)}=\mathcal V(t)$. 
	As $\pi$ is a Finsler submersion in a neighborhood of $L$ (recall Remark \ref{saturated}), $f_i$ and $h_i$ are identically zero, and, in particular,  by part (c) of Lemma \ref{lemma-dimVconstant},  $\gamma_{\xi_x}$ is orthogonal to $L_p$.

\end{proof}

\begin{lemma}
	\label{pontoregular}
	 If there exists a neighborhood $U$ of $M$ such that  $\F_\pi|_U$ 
	is a Finsler foliation, 
	then $\F_\pi$ is a Finsler foliation when restricted to the open subset of regular points of $\pi$. 
\end{lemma}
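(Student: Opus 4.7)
The plan is to show that at every regular point $p\in M_0$ (the set of regular points of $\pi$), the map $\pi$ is locally a Finsler submersion. Writing $L^R_p$ for the connected regular part of the fiber $L_p$, Lemma~\ref{firstapproach} reduces this to checking, for every basic vector field $\xi$ on a neighborhood of $p$ in $L^R_p$, that $F\circ\xi$ is constant in that neighborhood; once this is known, Lemma~\ref{firstapproach} also forces $L_p\subset M_0$ and $F\circ\xi$ constant on all of $L_p$, which is precisely the local Finsler submersion condition. By Remark~\ref{saturated} we may assume $\tilde U$ is a saturated open set on which $\pi$ is a Finsler submersion, so this local constancy already holds throughout $\tilde U$.

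First I would handle fibers meeting $\tilde U$: if $L_p\cap\tilde U\neq\emptyset$, then $F\circ\xi$ is constant on the open subset $L_p\cap\tilde U$ of $L^R_p$; by analyticity of $F$, $\pi$, and $\xi$, the function $F\circ\xi$ is constant on the connected component of $L^R_p$ meeting $\tilde U$, and Lemma~\ref{firstapproach} upgrades this to the whole fiber. Next, for a regular point $q=\gamma_{\xi_x}(t)$ lying on a horizontal geodesic from some $x\in\tilde U$, I would exploit Lemmas~\ref{SubAnal-lemma1}, \ref{lemma:V=L}, \ref{SubAnal-lemma2}, together with Lemma~\ref{lemma-variacao-jacobiholonomia}: the endpoint map $\eta^t_\xi$ sends $L\cap\tilde U$ into the single fiber $L_q$; the vertical Jacobi variations span $T_q L_q=\WilkingV(t)$ at regular instants; and $\dot\gamma_{\xi_x}(t)$ stays orthogonal to the fibers met along the way. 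Combined with analyticity, this lets one transport the basic-vector-field structure from $\tilde U$ to a neighborhood of $q$ in $L^R_q$, yielding the desired constancy there, so Lemma~\ref{firstapproach} concludes for $L_q$.

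The main obstacle is reaching regular fibers which are not joined to $\tilde U$ by horizontal geodesics. I would address this by an open-closed argument on $M_0$: the set of regular points at which $\pi$ is locally a Finsler submersion is open by the local nature of the conclusion and, thanks to the analyticity of the Finsler data combined with the fact (Lemma~\ref{lemma:V=L}) that singular points along horizontal geodesics are isolated, closed in $M_0$. Using the connectedness of $M$ from the hypothesis of Theorem~\ref{SubAnal}, and iterating the geodesic-transport step through the analytic structure of $\pi$, this set must coincide with all of $M_0$, completing the proof.
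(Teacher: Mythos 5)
Your proposal identifies the right target (reduce, via Lemma~\ref{firstapproach}, to showing that $F\circ\xi$ is locally constant on basic vector fields) and correctly enlists Remark~\ref{saturated} and the horizontal-geodesic/Wilking machinery. However, the central step is left as a black box. You write that Lemmas~\ref{SubAnal-lemma1}, \ref{lemma:V=L}, \ref{SubAnal-lemma2}, \ref{lemma-variacao-jacobiholonomia} together with analyticity ``let one transport the basic-vector-field structure from $\tilde U$ to a neighborhood of $q$,'' but that is precisely the nontrivial content of the proof, and none of those lemmas does it directly. What the paper actually does is the following: it picks two points $q_0$ and $\gamma(0)$ in the fiber $L_q\subset\tilde U$, builds two vector fields $Y$ (along $\gamma_{\xi_{q_0}}$) and $\tilde Y$ (along $\gamma$) that are orthogonal to the moving plaques $\eta^t_\xi(\widetilde U)$ and whose Wilking-horizontal parts $X$, $\tilde X$ are $D^\h$-parallel with matching initial $d\pi$-projections; it then proves $d\pi(Y(t))=d\pi(\tilde Y(t))$ by identifying $D^\h$-parallel transport with the horizontal lift of the parallel transport of the base Riemannian metric $\tilde g_{V^*}$ from Proposition~\ref{redRiesub}, checking this near $t=0$ where $\pi$ is already known to be a Finsler submersion, and extending by analyticity. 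Only then does the submersion property near $L_q$ give $F(Y(t))=F(\tilde Y(t))$ and hence the constancy of $F$ on basic fields at $L_{\gamma(t_0)}$. Citing the lemmas without this construction does not close the argument.

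Your third paragraph also introduces an unnecessary open-closed detour. The completeness hypothesis on $(M,F)$ guarantees, for every regular point $p$, a minimizing geodesic $\gamma$ from the closed fiber $L_q$ to $p$; minimality forces $\gamma$ to be orthogonal to $L_q$, and Lemma~\ref{SubAnal-lemma2} then makes it orthogonal to every regular fiber it meets. Thus every regular fiber is already joined to $L_q$ by a horizontal geodesic, and the worry motivating your open-closed argument does not arise. Even if you kept the open-closed scheme, the closedness step would require exactly the construction above, so the detour does not save you from it.
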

\begin{proof}
  Fix $q \in U$.	The first observation is that, by Remark \ref{saturated}, $\pi$ is a Finsler submersion when restricted to $\pi^{-1}(\pi(U))$ and 
	$L_{q}$ is a closed submanifold. Given $p \in M$, a regular point of $M$, there exists a minimizing geodesic $\gamma:[0,b]\rightarrow M$ from 
	$L_{q}$ to $p $, and it follows from the Finsler Morse index Theorem (see \cite{Peter06}) that there is no focal point on $[0,b)$. As $\gamma$ minimizes the distance from $L_{q}$, it must be orthogonal to $L_{q}$, and then, by 
	Lemma \ref{SubAnal-lemma2}, orthogonal to all the regular fibers of $\pi$. Now  consider the basic vector field $\xi$ along $L_{q }$ such 
	that $\xi_{\gamma(0)}=\dot\gamma(0)$. Then the map $\eta^t_\xi:L_{q }\rightarrow L_{\gamma(t)}$ is a  diffeomorphism on a neighborhood of 
	$\gamma(0)$ for every $t\in [0,b)$. Fix an instant $t_0\in (0,b)$ such that $\gamma(t_0)$ is a regular point of $\pi$ (recall that, by 
	Lemma \ref{lemma:V=L}, singular points are isolated along $\gamma$), then by continuity and the compactness of $[0,t_0]$, 
	it is possible to choose a neighborhood  $\widetilde{U}\subset L_{q }$ 
		where $\eta^t_\xi:\widetilde{U}\rightarrow L_{\gamma(t)}$  is a diffeomorphism onto the image for all $t\in [0,t_0]$.  
		Given a vector $v$ horizontal to $\widetilde{U}_0=\eta^{t_0}_\xi(\widetilde{U})$ at $p_{0} =\eta^{t_0}_\xi(q_0)$,  for some $q_0\in \tilde U$, 
		consider the Wilking's  distributions $\WilkingH^{q_0}(t_0)$ and 
		$\WilkingV^{q_0}(t_0)$ along the horizontal curve $\gamma_{\xi_{q_0}}$, 
		and let $v^\h$ be the projection of $v$ to $\WilkingH^{q_0}(t_0)$. 
		Consider the $D^\h$-parallel vector field $X$ along $\gamma_{\xi_{q_0}}$ 
		such that $X(t_0)=v^\h$  and  the vector field $Y$ along  $\gamma_{\xi_{q_0}}$ 
		 orthogonal to $\eta^t_{\xi}(\widetilde{U})$ at each $t\in [0,t_0]$ 
		such that its Wilking's projection to $\WilkingH^{q_0}$ is $X$  (recall \cite[Lemma 2.9 (a)]{Alexandrino-Alves-Javaloyes}). 
		Let $v_1$ be the orthogonal vector   to $L_{q }$ at $\gamma(0)$ such that $d\pi(v_1)=d\pi(Y(0))$.  
		Now repeat the process to obtain a vector field  along $\WilkingH^{\gamma(0)}$ (the horizontal Wilking's distribution along $\gamma$)  
		and $\tilde Y$ horizontal to $\eta^t_{\xi}(\widetilde{U})$ such that its Wilking's projection to $\WilkingH^{\gamma(0)}(t)$, 
		$\tilde X=\tilde Y^\h$,  
		is $D^\h$-parallel and $\tilde X(0)=v_1^\h$. Let us observe that if we prove that 
\begin{equation}\label{projection}
d\pi(Y(t))=d\pi(\tilde Y(t))\quad \text{for all $t\in [0,t_0]$},
\end{equation}
 then, for $t$ close to $0$, $F(Y(t))=F(\tilde Y(t))$, because $\pi$ is a Finsler submersion, but, by analyticity, this will be true for all $t\in [0,t_0]$. This implies that $F(v)=F(\tilde Y(t_0))$, and in particular, that $F$ is constant on basic vector fields 
along $\widetilde{U}_0$. By Lemma \ref{firstapproach}, $F$ is constant on basic vector fields along the whole fiber $L_{\gamma(t_0)}$. Finally, by continuity, this is also true for $L_{p }$.
So, let us prove Eq. \eqref{projection}, which is equivalent to prove that   $d\pi(X(t))=d\pi(\tilde X(t)).$ 
 This  equivalence  follows from the fact that  $d\pi(\WilkingV^{q_0}(t))=d\pi(\WilkingV^{\gamma(0)}(t))=\{0\}$, because 
$T_{\gamma_{\xi_{q_0}}(t)} \eta^t_\xi(\widetilde{U})=\WilkingV^{q_0}(t)$,
$T_{\gamma(t)} \eta^t_\xi(\widetilde{U})=\WilkingV^{\gamma(0)}(t)$, and 
$\eta^t_\xi (\widetilde{U})\subset L_{\gamma_{\xi_{q_0 }}(t)}=L_{\gamma(t)}$  
(recall that we assume that $\eta^t_\xi|_{\tilde U}$ is a diffeomorphism onto the image for $t\in [0,t_0]$).  
By analyticity, we only have to prove that this holds for $t$ close to $0$. But this is true, because 
the $D^\h$-parallel transport in a Riemannian submersion is the horizontal lift  of the parallel transport in the base. 
In our case, the $D^\h$-parallel transport coincides with the lift of the parallel transport induced by $\tilde g_{V^*}$, where $V^*$ is a (local) geodesic vector field in the base tangent to $\pi\circ\gamma$, considering the Riemannian metric $g_{V}$ on $M$, where $V$ is the horizontal lift  of $V^*$, 
being $g$ and $\tilde g$, the fundamental tensors of the Finsler metrics on $M$ and $B$, respectively (recall Proposition \ref{redRiesub}). So, the proof is concluded.   

\end{proof}

\begin{remark}
Observe that as a consequence of Lemmas \ref{lemma:V=L}, \ref{SubAnal-lemma2} and \ref{pontoregular}, given a geodesic which is horizontal in one regular point, then it is horizontal in all the regular points and, by continuity (using Lemma \ref{lemma:V=L}) the Wilking distribution does not depend on the regular instant chosen as initial point. Let us see that the solutions to the transversal Jacobi equation of two horizontal geodesics with the same projection can be identified. 
\end{remark}

\begin{lemma}\label{comparingJh}
Consider a geodesic $\gamma$  of $(M,F)$ which is horizontal at the regular points. It holds that
\begin{enumerate}[(a)]
\item  if $X$ is a solution of the transversal Jacobi equation of $\gamma$ in Eq. \eqref{eqJacWil2}, then along the regular instants of $\gamma$, $d\pi\circ X$ is a Jacobi field of $\pi\circ \gamma$, and the solutions of the transversal Jacobi equation are characterized by this property,
\item  if  $\alpha$ is another geodesic which is horizontal in the regular points with  $\pi\circ \gamma=\pi\circ\alpha$, then there exists a vector field $Y$ along $\alpha$ which fulfills the transversal Jacobi equation of $\alpha$ and such that  $d\pi\circ X=d\pi\circ Y$ 
 and $g_{\dot\gamma}(X,X)=g_{\dot\alpha}(Y,Y)$, 
\item $\gamma$ and $\alpha$ provide the same conjugate points of the transversal Jacobi equation.
\end{enumerate}
\end{lemma}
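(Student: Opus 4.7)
The plan is to reduce all three statements to the classical theory of Riemannian submersions via Proposition~\ref{redRiesub}, working on open intervals of regular instants and then extending analytically to $\gamma$ as a whole. Concretely, on any regular subinterval of $\gamma$, I would pick a geodesic vector field $V^*$ in $B$ tangent to the geodesic $\pi\circ\gamma$ and take its horizontal lift $V$; by Proposition~\ref{redRiesub}, $V$ is a geodesic vector field with $V_{\gamma(t)}=\dot\gamma(t)$ and $\pi:(M,g_V)\to(B,\tilde g_{V^*})$ is a Riemannian submersion. Proposition~\ref{proposition-covariantderivative-finsler} then guarantees that the Jacobi operator and covariant derivative along $\gamma$ computed with $F$ coincide with those of $g_V$, so the transversal Jacobi equation \eqref{eqJacWil2} is the same equation in both settings. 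Moreover, by Lemma~\ref{lemma:V=L}, at regular instants $\mathcal{V}(t)=\ker d\pi_{\gamma(t)}$, so $\mathcal{H}(t)$ is precisely the $g_V$-orthogonal complement of $\dot\gamma(t)$ inside the $\pi$-horizontal space, and $d\pi|_{\mathcal{H}(t)}$ is a $g_V$--$\tilde g_{V^*}$ linear isometry onto $\dot{(\pi\circ\gamma)}(t)^{\perp}$.

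For part (a), I would invoke the classical O'Neill formulas for Jacobi fields of a Riemannian submersion (\cite{Oneill66}, already cited in Remark~\ref{remark-transverseJacobifield}), which identify the horizontal part of a Jacobi field upstairs with a Jacobi field downstairs and show that the horizontal equation induced on $\mathcal{H}$ is exactly \eqref{eqJacWil2}. Thus on any regular interval, $X$ solves \eqref{eqJacWil2} if and only if $d\pi\circ X$ is a Jacobi field on $\pi\circ\gamma$, which gives the desired characterization. Since $\pi$ and all data are analytic and, by Lemma~\ref{lemma:V=L}, the singular instants are isolated, the statement propagates analytically to the whole of $\gamma$.

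For part (b), set $\tilde J:=d\pi\circ X$, a Jacobi field along $\pi\circ\gamma=\pi\circ\alpha$ by (a). Applying (a) to $\alpha$, the unique Wilking-horizontal lift $Y$ of $\tilde J$ along $\alpha$ satisfies the transversal Jacobi equation of $\alpha$ and $d\pi\circ Y=\tilde J$. For the norm identity, I would choose $V^*$ as above; its horizontal lift $V$ is tangent to both $\gamma$ and $\alpha$, because both are horizontal lifts of an integral curve of $V^*$. Hence $g_V=g_{\dot\gamma}$ along $\gamma$ and $g_V=g_{\dot\alpha}$ along $\alpha$. Since $X$ and $Y$ are $\pi$-horizontal at regular instants and project to the same vector field on $B$, the isometry property of the Riemannian submersion gives
\[
g_{\dot\gamma}(X,X)=\tilde g_{V^*}(\tilde J,\tilde J)=g_{\dot\alpha}(Y,Y)
\]
on regular intervals, and analyticity extends the identity to all $t$. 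Part (c) then follows from (a) and (b): at regular instants $d\pi|_{\mathcal{H}(t)}$ is injective, so the zeros of a non-trivial solution $X$ coincide with the zeros of $d\pi\circ X$, which are the conjugate instants of the geodesic $\pi\circ\gamma=\pi\circ\alpha$; thus the conjugate instants of $\gamma$ and of $\alpha$ agree, and analyticity handles the isolated singular instants.

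The main obstacle I anticipate is the behavior at singular instants of $\gamma$, where $\ker d\pi$ can strictly contain $\mathcal{V}(t)$, so $d\pi|_{\mathcal{H}(t)}$ may fail to be injective and the O'Neill correspondence cannot be used pointwise. My strategy to overcome this is to do all calculations on the (open and dense) regular part via Proposition~\ref{redRiesub}, and then use the isolatedness of singular instants from Lemma~\ref{lemma:V=L} together with the analyticity of $X$, $Y$, $\mathcal{H}$, $\mathcal{V}$ and of $\pi$ to transport the equalities and the zero-set identification across those finitely many singular points.
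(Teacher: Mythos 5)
Your proposal is correct and takes essentially the same route as the paper: both reduce to the Riemannian submersion picture via Proposition~\ref{redRiesub} and O'Neill's identification of horizontal Jacobi fields (which is exactly the content of Remark~\ref{remark-transverseJacobifield}), and both use analyticity together with the isolatedness of singular instants from Lemma~\ref{lemma:V=L} to extend across the singular points. The paper's proof is just a terser version that delegates the details to Remark~\ref{remark-transverseJacobifield}, whereas you spell out the geodesic vector field reduction and the isometry argument explicitly.
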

\begin{proof}
For part $(a)$, recall Remark \ref{remark-transverseJacobifield}, and use that the singular points are isolated (see Lemma \ref{lemma:V=L}). For part $(b)$, observe that part $(a)$ implies part $(b)$ when we restrict $\gamma$ and $\alpha$ to an interval with regular points. Analyticity implies that $Y$ can be extended to the whole interval with the required properties. Part $(c)$ is a straightforward consequence of part $(b)$.
\end{proof}

\begin{lemma}
\label{SubAnal-lemma3}
If $d$ is a regular value, then $\eta^r_{\xi}: L\rightarrow \pi^{-1}(d)$  is a diffeomorphism. 
\end{lemma}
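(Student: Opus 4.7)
The plan is to construct an explicit two-sided inverse $\Psi: L' \to L$ for $\eta^r_\xi$, where $L' := \pi^{-1}(d)$, exploiting the Finsler submersion structure near both fibers. Since $L$ and $L'$ are both regular fibers, Lemma \ref{pontoregular} provides this Finsler submersion structure on neighborhoods of each.

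First I would observe that by Lemma \ref{SubAnal-lemma1}, the projection $\beta := \pi \circ \gamma_{\xi_p}$ is a single analytic curve in $B$ from $c$ to $d$, independent of $p \in L$, and by Proposition \ref{proposition-duran-geodesics-submersion} together with Lemma \ref{SubAnal-lemma2} it is a geodesic of $\tilde F$. I then extend $\xi$ to a basic unit vector field $\tilde\xi$ on $L'$ by taking $\tilde\xi_q$ to be the unique horizontal lift of $\dot\beta(r)$ at each $q \in L'$; uniqueness of the horizontal lift together with Lemma \ref{SubAnal-lemma2} forces $\tilde\xi_{\eta^r_\xi(p)} = \dot\gamma_{\xi_p}(r)$ for every $p \in L$. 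Defining $\Psi(q) := \gamma_{\tilde\xi_q}(-r)$ (using completeness of $F$ in the backward direction, implicit in the theorem's statement for all $r \in \RR$) and applying Lemma \ref{SubAnal-lemma1} to the triple $(L', \tilde\xi, -r)$ together with the connectedness of $\pi^{-1}(c)$, we conclude that $\Psi$ maps $L'$ into a single fiber of $\pi$, which must equal $L$ because $\Psi(\eta^r_\xi(p)) = p$ for $p \in L$ by the reparameterization $\gamma_{\tilde\xi_{\eta^r_\xi(p)}}(s) = \gamma_{\xi_p}(r+s)$. This same reparameterization gives $\Psi \circ \eta^r_\xi = \id_L$.

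The harder direction is $\eta^r_\xi \circ \Psi = \id_{L'}$. Given $q \in L'$, put $p := \Psi(q) \in L$ and $v := \dot\gamma_{\tilde\xi_q}(-r) \in T_pM$; I want to show $v = \xi_p$. The analog of Lemma \ref{SubAnal-lemma2} with the roles of $L$ and $L'$ exchanged (whose proof transfers verbatim, since $\pi$ is a Finsler submersion near $L'$ and the ingredients---Lemmas \ref{lemma-variacao-jacobiholonomia}, \ref{lemma:V=L}, and \ref{SubAnal-lemma1}---have symmetric analogs) shows that $v$ is orthogonal to $L$ at the regular point $p$. Moreover, since $\pi \circ \gamma_{\tilde\xi_q}$ is a $\tilde F$-geodesic matching $\beta$ at time $0$ with velocity $\dot\beta(r)$, uniqueness of geodesics in $B$ yields $\pi \circ \gamma_{\tilde\xi_q}(s) = \beta(r+s)$, so $d\pi(v) = \dot\beta(0) = d\pi(\xi_p)$. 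Uniqueness of the Finsler horizontal lift at $p \in L$ then forces $v = \xi_p$, whence $\gamma_v = \gamma_{\xi_p}$ and $\eta^r_\xi(p) = \gamma_v(r) = \gamma_{\tilde\xi_q}(0) = q$.

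Both $\eta^r_\xi$ and $\Psi$ are smooth (in fact analytic) via the geodesic flow, so they are mutually inverse diffeomorphisms. The main subtlety is handling Finsler non-reversibility: it forces the use of backward $F$-geodesics from $L'$ (requiring global completeness) and obliges one to verify that the horizontal-lift uniqueness and orthogonality statements already established for $L$ transfer to $L'$, which they do because both are regular fibers sharing a common Finsler submersion structure in their neighborhoods.
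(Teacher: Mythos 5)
Your proposal is correct and follows essentially the same approach as the paper: the paper also constructs the inverse as $\eta^{-r}_{\tilde\xi}$, where $\tilde\xi$ is the unit basic vector field on $\pi^{-1}(d)$ extending $\dot\gamma_{\xi_p}(r)$, obtained via Lemmas \ref{SubAnal-lemma2} and \ref{pontoregular}. The only difference is that you spell out the ``it is also possible to check'' step (verifying that $\eta^{-r}_{\tilde\xi}$ is a genuine two-sided inverse via the reparametrization identity and horizontal-lift uniqueness), which the paper leaves to the reader.
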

\begin{proof}
By Lemma \ref{SubAnal-lemma2},   $\dot{\gamma}_{\xi_p}(r)$ is orthogonal to $\pi^{-1}(d)$. As $\pi$ is a Finsler submersion on $\pi^{-1}(d)$ (recall Lemma \ref{pontoregular}), we can extend the normal vector $\dot{\gamma}_{\xi_p}(r)$ to a unit basic vector field $\tilde{\xi}$ along the fiber $\pi^{-1}(d)$. 
It is also possible to check that  $\eta^{-r}_{ \tilde{\xi}}$  is the inverse of 
$\eta^r_{\xi}$.
\end{proof}

 We have just proved in Lemma  \ref{SubAnal-lemma3}  
 that $\eta^r_{\xi}:\pi^{-1}(c) \to \pi^{-1}(d)$ is a diffeomorphism when $d$ is a regular value, which implies that $d\eta^r_\xi$ has constant rank. We have to check now that
 $d\eta^r_\xi$ has constant rank  when $d$ is a singular value. 
This will be done in Lemma  \ref{lemma-equifocality}.

\begin{lemma}
\label{lemma-equifocality}
Consider $\eta^r_{\xi}:L\rightarrow \pi^{-1}(d)$ with $d\in B$ a singular value. Then  $\dim \rank d\eta^r_{\xi}$ is constant 
along $L$. 
\end{lemma}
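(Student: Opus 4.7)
By Lemma \ref{lemma-variacao-jacobiholonomia}, for each $p\in L$ and $v\in T_pL$ we have $d\eta^r_\xi|_p(v)=J_v(r)$, where $J_v$ is the unique vertical $L$-Jacobi field along $\gamma_{\xi_p}$ with $J_v(0)=v$. Hence
\[
\ker d\eta^r_\xi|_p = \{J \text{ vertical $L$-Jacobi field along } \gamma_{\xi_p} : J(r)=0\},
\]
and since $\dim T_pL=n-k$ is constant along $L$, showing that $\rank d\eta^r_\xi$ is constant on $L$ is equivalent to showing that the function
\[
\phi(p) := \dim\{J \text{ vertical $L$-Jacobi field along } \gamma_{\xi_p} : J(r)=0\}
\]
is constant on $L$. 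Geometrically $\phi(p)$ is the multiplicity with which $r$ is an $L$-focal instant of the vertical Jacobi equation along $\gamma_{\xi_p}$.

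Fix $p,p'\in L$, join them by an analytic curve $\beta:[0,1]\to L$, and set $\gamma_s := \gamma_{\xi_{\beta(s)}}$. By Lemma \ref{SubAnal-lemma1} all $\gamma_s$ share a common base projection $c:=\pi\circ\gamma_s$ in $B$. The analyticity of $\pi$, $\xi$ and of the geodesic flow guarantees that the spaces of vertical $L$-Jacobi fields along $\gamma_s$ form an analytic vector bundle of rank $n-k$ over $[0,1]$, and that the evaluation-at-$r$ maps form an analytic family of linear maps with kernel dimensions $\phi(\beta(s))$. For an analytic family of linear maps the rank is lower semi-continuous, so $\phi$ is upper semi-continuous along $\beta$; to conclude constancy it suffices to prove the reverse inequality.

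For this, I would extend the argument of Lemma \ref{comparingJh}(c) from the transversal (horizontal) sector to the vertical sector. That lemma already implies that horizontal-Jacobi conjugate contributions at $r$ depend only on $\pi\circ\gamma_s$, hence are $s$-independent. The vertical Jacobi equation along $\gamma_s$ is governed by the O'Neill tensor $\mathbb{A}_{\gamma_s}$ and by the vertical--vertical block of $R_{\dot\gamma_s}$; in the open set $U$ where $\F_\pi|_U$ is a Finsler foliation (Lemma \ref{pontoregular}), these are projectable quantities of the Finsler submersion, so the horizontal holonomy of the submersion identifies the vertical bundles and the vertical Jacobi ODE along the $\gamma_s$ in an $s$-independent way. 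By analytic continuation, this identification persists through the isolated singular instants of $\gamma_s$ (Lemma \ref{lemma:V=L}). Therefore $\phi(\beta(s))$ depends only on the common projected geodesic $c$, and is independent of $s$.

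The main obstacle is making the vertical-sector identification rigorous in the Finsler setting: in the Riemannian case it amounts to O'Neill's horizontal-lift formulas applied to vertical Jacobi fields, but in the Finsler case one must carry out the computation using the anisotropic connection $\nabla^V$ along a projectable geodesic vector field $V$ (Proposition \ref{redRiesub}) and carefully verify both the basic nature of the ODE coefficients and the analytic continuation across singular instants.
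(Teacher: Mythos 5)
Your reduction in the first paragraph is correct: since a vertical $L$-Jacobi field is uniquely determined by its value at $t=0$ (this follows from the uniqueness argument inside the proof of Lemma \ref{lemma-variacao-jacobiholonomia}), $\rank d\eta^r_\xi|_p = \dim T_pL - \phi(p)$, and lower semi-continuity of rank does give you one of the two needed inequalities for free. The problem is the strategy you propose for the reverse inequality, which you yourself flag as the gap; it is not only incomplete but, as sketched, headed in a direction that cannot close.

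The assertion that the O'Neill tensor $\mathbb{A}_{\gamma_s}$ and the vertical--vertical block of $R_{\dot\gamma_s}$ ``are projectable quantities of the Finsler submersion'' is false. The vertical--vertical curvature encodes the intrinsic geometry of the individual fibers, and the fibers of a (Riemannian or Finsler) submersion are in general not mutually isometric; likewise $A$ is not a basic tensor. What \emph{is} basic is the combination $(R_{\dot\gamma})^\h + 3(\mathbb{A}_\gamma)^2$ appearing in the transversal equation \eqref{eqJacWil2}, and this is exactly why Lemma \ref{comparingJh} compares geodesics with the same projection in the \emph{horizontal} sector only. Worse, ``using the horizontal holonomy of the submersion to identify the vertical bundles'' is circular here: the holonomy diffeomorphism on the vertical side is precisely $\eta^t_\xi$, whose rank degeneration at the singular value is the very thing you are trying to control. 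So no $s$-independent normal form of the vertical ODE is available.

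The paper avoids the vertical Jacobi equation altogether and instead argues via the Morse index theorem combined with the transversal Morse--Sturm system. The steps you are missing are: (i) choose $\delta>0$ so that no two instants of $(r-\delta,r+\delta)$ are conjugate for the transversal equation \eqref{eqJacWil2}, which by Lemma \ref{comparingJh}(c) can be done uniformly in $x\in L$, and shrink $\delta$ so that $r$ is the only critical instant of $\pi$ in that window; (ii) cut the geodesic at a regular instant $\tilde s\in(r-\delta,r)$ and factor $\eta^r_\xi=\eta^{\tilde r}_{\tilde\xi}\circ\eta^{\tilde s}_\xi$ with $\eta^{\tilde s}_\xi$ a diffeomorphism, reducing the problem to local constancy of $\dim\ker d\eta^{\tilde r}_{\tilde\xi}$ near $\tilde p$; (iii) show every $L_{\tilde p}$-focal point of $\gamma_{\tilde\xi_{\tilde x}}|_{(0,\tilde r+\delta)}$ is of tangential type (its horizontal part $\tilde J^\h$ vanishes at $0$ and at the focal instant, hence identically, by the no-conjugate-pairs choice of $\delta$), so that the focal count $m(\gamma_{\tilde\xi_{\tilde x}})$ equals $\dim\ker d\eta^{\tilde r}_{\tilde\xi}(\tilde x)$; (iv) close with the two-sided squeeze: $m$ is lower semi-continuous (Morse index), while $\dim\ker d\eta^{\tilde r}_{\tilde\xi}$ is upper semi-continuous, so equality of the two forces both to be locally constant. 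That squeeze, not a normal form of the vertical ODE, is what gives the reverse inequality.
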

\begin{proof}

 As $L$ is connected, it is enough to prove that $\dim \rank d(\eta^r_{\xi})_{x}$ is locally constant. 
Consider a  point $p\in L$. Since the transversal Jacobi field equation along $\gamma_{\xi_p}$ starting at  $L$  is a Morse-Sturm system, 
there exists a $\delta>0$ so that any two points in $I_\delta=(r-\delta, r+\delta)$ are not conjugate to each other. In other
words, if $X$ is a solution of the transversal Jacobi field along $\gamma_{\xi_p}$, and  $s_{1}, s_{2}\in I_\delta$ so that  $X(s_{1})=0=X(s_{2})$, then  
$X(t)=0$ for all $t$, see e.g.  \cite[Lemma 2.1]{PaoloDaniel}. Also note that this $\delta>0$ is the same for other $\gamma_{\xi_x}$ for  $x\in L$  as a consequence of part $(c)$ of Lemma \ref{comparingJh}. 
 Using again that
the critical points of $\pi$ on  $\gamma_{\xi_x}$ are isolated,
 we  can also suppose, reducing $\delta$ if necessary, that the point  $\gamma_{\xi_x}(r)$ is the only critical point of $\pi$ on  $\gamma_{\xi_x}|_{I_\delta}$ for  $x\in L$.  

Let $\tilde{s}\in (r-\delta,r)$ and $\tilde{p}=\gamma_{\xi_p}(\tilde{s})$. Define the unit vector field $\tilde{\xi}$ 
along $L_{\tilde{p}}$ so that $\tilde{\xi}_{\tilde{p}}=\dot{\gamma}_{\xi_p}(\tilde{s})$. 
Recall that $\eta^{\tilde{s}}_{\xi}: L_{p}\to L_{\tilde{p}}$ is a diffeomorphism (see Lemma \ref{SubAnal-lemma3}). 
Set $\tilde{r}:=r-\tilde{s}$. Also note that $\eta^r_{\xi}=\eta^{\tilde{r}}_{\tilde{\xi}}\circ\eta^{\tilde{s}}_{\xi}$. Therefore to prove  
that $x\rightarrow \dim \rank d(\eta^r_{\xi})_{x}$ is locally constant at $p\in L$,   
 it suffices to prove
that  $\tilde x\rightarrow \dim \rank d(\eta^{\tilde{r}}_{\tilde{\xi}})_{\tilde{x}}$ is locally constant at $\tilde{p}=\eta^{\tilde{s}}_{\xi}(p) \in L_{\tilde{p}}$.

We claim that \emph{$L_{\tilde{p}}$-focal points along the horizontal segment of geodesic 
$\gamma_{\tilde\xi_{\tilde{x}}}|_{(0,\tilde{r}+\delta)}$ are of tangential
type.} In other words,  $\gamma_{\tilde\xi_{\tilde{x}}}(t_{1})$ is a focal point  with
multiplicity $k$ ($0<t_{1}<\tilde{r}+\delta$) if and only if $\tilde{x}$ is a critical point of the endpoint map $\eta^{t_{1}}_{\tilde{\xi}}$ 
and  $\dim \ker d \eta^{t_{1}}_{\tilde{\xi}}=k.$ 

In order to prove the claim, consider
an $L_{\tilde{p}}$-Jacobi field  $\tilde{J}$ along $t\to \gamma_{\tilde\xi_{\tilde{x}}}(t)=\gamma_{\xi_x}(t+\tilde{s})$ and 
assume that $\tilde{J}(t_{1})=0$ for  some $0<t_{1}<\tilde{r}+\delta$. Let $J$ be the Jacobi field along $\gamma_{\xi_x}$ so that $\tilde{J}(t)= J(t+\tilde{s})$.  Observe that $J$ is the variation vector field of a variation by orthogonal geodesics to $L_{\tilde p}$ 
 (see Prop. \ref{proposition-jacobifield-variation})  and then its projection is a Jacobi field of $\pi\circ \gamma_{\xi_x}$. By part $(a)$ of  Lemma \ref{comparingJh}, $J^\h$ is a solution of the transversal Jacobi equation. 
As $J^{\h}(\tilde{s})=0=J^{\h}(\tilde{s}+t_{1})$,  from the choice of $I_\delta$  and part $(c)$ of Lemma \ref{comparingJh}, 
we conclude  
that $J^{\h}(t)=0$ for all $t$. Therefore  $\tilde{J}^{\h}(t)=0$ for all $t$. This fact and 
Lemma \ref{lemma-variacao-jacobiholonomia} imply that 
$\gamma_{\tilde\xi_{\tilde{x}}}(t_{1})$ is an $L_{\tilde p}$-focal point 
 if and only if $\tilde{x}$ is a critical point of the endpoint map $\eta^{t_{1}}_{\tilde{\xi}}$.

From what we have discussed above, we have concluded that: 
\begin{eqnarray}
\label{pt-focal-gamma1} 
m (\gamma_{\tilde\xi_{\tilde{x}}})= \dim\ker d\eta^{\tilde{r}}_{\tilde{\xi}}(\tilde{x}),
\end{eqnarray}
where $m(\gamma_{\tilde\xi_{\tilde{x}}})$ denotes the number of focal points on $\gamma_{\tilde\xi_{\tilde{x}}},$ each counted with its multiplicities.    

On the other hand, by continuity of the Morse index we have
\begin{eqnarray}
\label{morse-desigualdade-ptfocais}
m (\gamma_{\tilde\xi_{\tilde{x}}})\geq m(\gamma_{\tilde\xi_{\tilde{p}}}),
\end{eqnarray}
for all $\tilde{x}$ in some neighbourhood of $\tilde{p}$ in $L_{\tilde{p}}.$

Eq.~\eqref{pt-focal-gamma1} and \eqref{morse-desigualdade-ptfocais} together with the elementary inequality 
$\dim\ker d\eta^{\tilde{r}}_{\tilde{\xi}}(\tilde{x})\leq\dim  \ker d\eta^{\tilde{r}}_{\tilde{\xi}}(\tilde{p})$  (as $\dim {\rm Im}\,d\eta^{\tilde{r}}_{\tilde{\xi}}(\tilde{x})\geq\dim  {\rm Im} \,d\eta^{\tilde{r}}_{\tilde{\xi}}(\tilde{p})$) 
imply that  $\dim \ker d\eta^{\tilde{r}}_{\tilde{\xi}}$  is  constant in a neighborhood of $\tilde{p}$ on $L_{\tilde{p}}$.
As  $\eta^{\tilde{s}}_{\xi}: L_{p}\to L_{\tilde{p}}$ is a diffeomorphism, 
 it follows that  $\dim \ker d\eta^r_{\xi}$  is  constant in a neighborhood of $p$ on $L_{p}$,  which concludes. 

 
\end{proof}


\subsection{Finslerian character}
In this section, we will additionally assume that the fibers constitute a singular foliation and will prove  its  transnormality.
 This will follow directly from Lemmas \ref{regularhorizontal}, \ref{SubAnal-lemma2}  and \ref{SubAnal-lemma8}.

\begin{lemma}\label{regularhorizontal} 
	If $\mathcal F_{\pi} $ 	is a singular foliation, $\gamma$ is a geodesic with $\dot\gamma(0)\in \nu (L_{\gamma(0)}) $ and $\gamma(0)$ is a regular point,  then $\gamma$ is horizontal.  
\end{lemma}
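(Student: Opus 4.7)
The plan is to combine the orthogonality at regular points (already established by Lemma~\ref{SubAnal-lemma2}) with a continuity argument at the isolated singular points of $\pi$ along $\gamma$, using the smoothness hypothesis on the singular foliation to obtain extensions of tangent vectors.

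First, I would set up $\gamma$ as one of the horizontal geodesics to which Lemma~\ref{SubAnal-lemma2} applies. Since $\gamma(0)$ is a regular point of $\pi$ and $\F_\pi$ is a singular foliation, a neighborhood of $\gamma(0)$ lies in the regular stratum, where Lemma~\ref{pontoregular} guarantees that $\F_\pi$ is a (regular) Finsler foliation. After rescaling so that $F(\dot\gamma(0))=1$, I would extend $\dot\gamma(0)$ to a unit basic vector field $\xi$ on a neighborhood of $\gamma(0)$ in $L_{\gamma(0)}$; then $\gamma=\gamma_{\xi_{\gamma(0)}}$ by uniqueness of geodesics, so Lemma~\ref{SubAnal-lemma2} yields that $\dot\gamma(t)\in\nu(L_{\gamma(t)})$ at every $t$ for which $\gamma(t)$ is a regular point of $\pi$.

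Next I would treat the singular instants. By Lemma~\ref{lemma:V=L}, the set of $t$ for which $\gamma(t)$ lies in a singular leaf is discrete, so fix such a singular instant $t_0$ and let $v\in T_{\gamma(t_0)}L_{\gamma(t_0)}$ be arbitrary. Since $\F_\pi$ is a singular foliation, there exists a smooth vector field $X$, defined in a neighborhood of $\gamma(t_0)$ and tangent to every leaf, with $X_{\gamma(t_0)}=v$. Consider
\[
f(t):=g_{\dot\gamma(t)}\bigl(\dot\gamma(t),\,X_{\gamma(t)}\bigr),
\]
which is continuous since $\dot\gamma(t)\neq 0$ (because $F(\dot\gamma(\cdot))$ is constant along geodesics). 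For every $t$ in a punctured neighborhood of $t_0$, $\gamma(t)$ is regular, $X_{\gamma(t)}\in T_{\gamma(t)}L_{\gamma(t)}$, and by the previous step $\dot\gamma(t)\in\nu(L_{\gamma(t)})$; hence $f(t)=0$. Continuity forces $f(t_0)=0$, i.e.\ $g_{\dot\gamma(t_0)}(\dot\gamma(t_0),v)=0$. Since $v$ was arbitrary, $\dot\gamma(t_0)\in\nu(L_{\gamma(t_0)})$, establishing horizontality at every singular instant.

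The main obstacle I expect is the passage from regular to singular instants: without the smoothness assumption on $\F_\pi$ as a singular foliation, one could not produce the extension $X$ of an arbitrary tangent vector $v$, and the naive continuity argument would fail because $\dim T_{\gamma(t)}L_{\gamma(t)}$ jumps at $t_0$. The hypothesis that $\F_\pi$ is a singular foliation (together with the isolatedness of singular instants from Lemma~\ref{lemma:V=L}) is precisely what bridges this gap.
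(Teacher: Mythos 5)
Your proposal is correct and takes essentially the same approach as the paper's proof: apply Lemma~\ref{SubAnal-lemma2} at regular instants, use the singular foliation hypothesis to extend an arbitrary tangent vector $v$ at a singular instant $t_0$ to a smooth vector field $X$ tangent to the leaves, invoke Lemma~\ref{lemma:V=L} to ensure singular instants are isolated, and conclude by continuity of $t\mapsto g_{\dot\gamma(t)}(\dot\gamma(t),X_{\gamma(t)})$. The only (harmless) addition is your explicit setup of the basic vector field $\xi$ near $\gamma(0)$; the paper takes the applicability of Lemma~\ref{SubAnal-lemma2} for granted.
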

\begin{proof} 
 By Lemma~\ref{SubAnal-lemma2}, the geodesic $\gamma$ 
	is orthogonal to the regular leaves associated with regular values of $\pi$. 
 Now suppose  that $\gamma(t_0)$ is a singular point and $u\in T_{\gamma(t_0)}L_{\gamma(t_0)}$. Then as $(M,\F)$ is a singular foliation, there exists a vector field $X$ in a neighborhood of $\gamma(t_0)$ such that $X$ is tangent to the leaves and $X_{\gamma(t_0)}=u$. By Lemma \ref{lemma:V=L}, the singular points along $\gamma$ are isolated, so $g_{\dot\gamma(t)}(X_{\gamma(t)},\dot\gamma(t))=0$ for all $t$ close enough to $t_0$. By continuity, we conclude that $g_{\dot\gamma(t_0)}(u,\dot\gamma(t_0))=0$, as desired. 
	\end{proof}

Note that the above lemma does not directly  imply that the foliation $\F_\pi$ 
is a singular Finsler foliation, because the lemma was only proved for a geodesic $\gamma$ starting at a regular point $p\in L$.

\begin{lemma}
	\label {SubAnal-lemma5} If  $\F_\pi$  is a singular foliation  and $c$ is a regular value of $\pi$,  then
	$\eta^r_{\xi}:\pi^{-1}(c) \to \pi^{-1}(d)$ is surjective even when  $d$ is a singular value.
\end{lemma}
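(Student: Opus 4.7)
The plan is to run an approximation--compactness argument that reduces the singular-value case to the regular-value case already handled in Lemma~\ref{SubAnal-lemma3}. Fix $q\in\pi^{-1}(d)$ and any $p_0\in L$, and consider the analytic curve $\tilde\gamma(t):=\pi(\gamma_{\xi_{p_0}}(t))$ in $B$, which by Lemma~\ref{SubAnal-lemma1} does not depend on $p_0$ and runs from $c$ at $t=0$ to $d$ at $t=r$. By Lemma~\ref{lemma:V=L} the singular points of $\pi$ along $\gamma_{\xi_{p_0}}$ are isolated, so I would pick $\delta>0$ such that $\tilde\gamma(t)$ is a regular value of $\pi$ for every $t\in(r-\delta,r)$, and apply Lemma~\ref{SubAnal-lemma3} on this interval to obtain diffeomorphisms $\eta^t_\xi:L\to\pi^{-1}(\tilde\gamma(t))$.

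Next I would build sequences $t_n\nearrow r$ and $q_n\in\pi^{-1}(\tilde\gamma(t_n))$ with $q_n\to q$, use the diffeomorphisms to write $q_n=\eta^{t_n}_\xi(p_n)$ for a unique $p_n\in L$, and pass to the limit. The compactness needed to extract a convergent subsequence $p_n\to p\in L$ will come almost for free: the unit-speed geodesics $\gamma_{\xi_{p_n}}$ give $d_F(p_n,q_n)\le t_n\le r$, so $\{p_n\}$ lies in an $F$-backward ball around $q$, which is compact by the standing completeness of $(M,F)$, intersected with the closed set $L$. Joint continuity of the geodesic flow will then yield $\eta^r_\xi(p)=\lim_n\eta^{t_n}_\xi(p_n)=q$, finishing the argument.

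The hard part will be constructing the approximating sequence $q_n\to q$ with $q_n$ in the specific regular fibers $\pi^{-1}(\tilde\gamma(t_n))$. This is where the hypothesis that $\F_\pi$ is a singular foliation really enters: near $q$ there is a local submersion $\pi_U:U\to B(0)\subset\RR^k$ whose fibers are plaques of leaves, so $\pi|_U$ factors as $h\circ\pi_U$ for a continuous map $h:B(0)\to B$ with $h(0)=d$. I would then use the density of regular points of $\pi$ near $q$---which by part~(a) of Theorem~\ref{SubAnal} project to regular values---together with the continuity of $\tilde\gamma$ as $t\to r$, to select $x_n\in B(0)$ with $h(x_n)=\tilde\gamma(t_n)$ and $x_n\to 0$, taking $q_n$ to be a point in the plaque $\pi_U^{-1}(x_n)$ close to $q$.
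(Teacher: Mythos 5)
The step in your third paragraph, where you ``select $x_n\in B(0)$ with $h(x_n)=\tilde\gamma(t_n)$ and $x_n\to 0$,'' is the crux of the lemma, and the justification you offer does not deliver it. Density of regular points in a neighborhood $U$ of $q$ only tells you that \emph{some} regular values of $\pi$ accumulate at $d=\pi(q)$; it does not tell you that the \emph{specific} values $\tilde\gamma(t_n)=\pi(\gamma_{\xi_{p_0}}(t_n))$ --- which accumulate at $d$ along the direction determined by the endpoint $\eta^r_\xi(p_0)$, in general a point of $\pi^{-1}(d)$ different from $q$ --- lie in $\pi(U)=h(B(0))$, let alone that one can find preimages $x_n\to 0$. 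Since $q$ is a singular point, $h$ is not a submersion at $0$, so $h(B(0))$ need not contain any neighborhood of $d$, and nothing forces the curve $\tilde\gamma$ to enter $\pi(U)$. What you actually need is that the regular fibers $\pi^{-1}(\tilde\gamma(t_n))$ approach \emph{every} point of $\pi^{-1}(d)$, not just $\eta^r_\xi(p_0)$; but this is an equidistance-type statement which is only established \emph{after} this lemma (it underlies Lemma~\ref{SubAnal-lemma6}), so using it here would be circular. In short, the approximation scheme does not by itself reduce the singular case to the regular one.

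The paper closes this gap by a different mechanism. On a small open set $\tilde U\subset L$ around a fixed $x_0$ with $q_0=\eta^r_\xi(x_0)$, the map $\eta^r_\xi$ is identified with the (past) footpoint projection onto the plaque $P_{q_0}$; the structural result for singular foliations that this projection is an open map (cited as \cite[Lemma 3.10]{Alexandrino-Alves-Javaloyes}) then shows that $\eta^r_\xi(L)$ is open in $\pi^{-1}(d)$. The compactness argument you sketched --- pulling back a convergent sequence to $L$ via backward-ball compactness of the complete metric --- shows $\eta^r_\xi(L)$ is closed, and connectedness of $\pi^{-1}(d)$ finishes. Your closedness step is essentially the same as the paper's; the missing ingredient in your proposal is the openness of $\eta^r_\xi$, which requires the footpoint-projection openness lemma rather than a density-of-regular-points argument.
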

\begin{proof}
	
	The proof consists of two steps. In the first step  we are going to check
	that the endpoint map
	$\eta^r_{\xi}:\pi^{-1}(c)\to\pi^{-1}(d)$ 
	is an open map. 
	And in the second step we have to check that the set $\eta^r_{\xi}(\pi^{-1}(c))$ is a closed subset of $\pi^{-1}(d)$. This ends the proof because the fibers are connected.

	For $q=\eta_{\xi}^r(x_0)\in \pi^{-1}(d)$,  consider a plaque $P_q$ of $\pi^{-1}(d)$ 
	which contains $q$ and a past tubular neighborhood $U=\tilde{\mathcal O}(P_q,\varepsilon)$ of the plaque $P_q$ of radius $\varepsilon>0$. 
	  By Lemma \ref{SubAnal-lemma3}  and the fact that the singular points along $\gamma_{\xi_{x_0}}$ are isolated (see Lemma \ref{lemma:V=L}) 
		 and using Lemma \ref{regularhorizontal}, 
	we can assume without loss of generality that
	$x_0\in U$ and $\gamma_{\xi_{x_0}}$ is the minimizing geodesic from $x_0$ to $P_q$.
  This implies that all the geodesics $\gamma_{\xi_x}$ with  $x\in \tilde U\subset U\cap \pi^{-1}(c)$, being $\tilde U$ a small enough open subset 
  of $\pi^{-1}(c)$ which contains $x_0$, are minimizing geodesics from $x$ to $P_q$, and then 
	\begin{equation}
	\label{eq-SubAnal-lemma5}
	\eta^r_{\xi}=\pi^{\nu}|_{\tilde U},
	\end{equation}
	where $\pi^{\nu}$ is the (past)  footpoint projection.
	Since $\F_\pi$ is a singular foliation, the map $\pi^{\nu}|_{\tilde U}$  is open by \cite[Lemma 3.10]{Alexandrino-Alves-Javaloyes}.
	This fact and Eq. \eqref{eq-SubAnal-lemma5} imply that  the endpoint map
	$\eta_{\xi}$ is an open map.

	Consider a sequence $\{y_n\}\subset \eta^r_{\xi}(\pi^{-1}(c))$ that converges to a point $ y \in \pi^{-1}(d)$.
	Let $\{x_n\} \subset \pi^{-1}(c)$ be a sequence so that $\eta^r_{\xi}(x_n)=y_n$. Since, for $n$ big enough,  $x_n\in \overline{B^-( y,2r)}$, 
	 denoting by $B^-( y,2r)=\{x\in M: d_F(x,y)<2r\}$, namely, the backward ball or radius $2r$ centered at $y$, 
	we have a  convergent  subsequence
	$x_{n_i}\to x\in \overline{B^-( y,2r)}.$ Therefore $\eta^r_{\xi}(x)=\lim \eta^r_{\xi}(x_{n_i})=\lim y_{n_i} =	y$ and hence 
	$y\in  \eta^r_{\xi}(\pi^{-1}(c)).$
	
\end{proof}

\begin{lemma}
\label{SubAnal-lemma6}
 Let $L_{p}$ be a regular leaf, and $L_{q}$ be a singular leaf. Then they are parallel, 
i.e., if $y_{0}$ and $y_{1}$ are points of 
 $L_q$, then $d(L_{p}, y_{0})=d(L_{p},y_{1})$ and $d( y_{0},L_{p})=d(y_{1},L_{p})$. 
\end{lemma}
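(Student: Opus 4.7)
The strategy is to transport a minimizing geodesic from one point of $L_q$ to another using the endpoint map of a basic unit normal vector field along $L_p$, and then to obtain the reverse-distance equality by symmetry with the reversed Finsler metric.

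Set $r_0 := d(L_p, y_0)$. Since $(M,F)$ is complete and the fibres of $\pi$ are closed, there is a unit-speed minimizing geodesic $\gamma_0:[0,r_0]\to M$ from some $x_0\in L_p$ to $y_0$. By the first variation formula for a submanifold (as in \cite{AlJav19}), $\gamma_0$ is orthogonal to $L_p$ at $x_0$, so $\dot\gamma_0(0)\in\nu_{x_0}L_p$ with $F(\dot\gamma_0(0))=1$. Because $L_p$ is a regular leaf and $\F_\pi$ is a Finsler foliation on the regular locus (Lemma \ref{pontoregular}), I can extend $\dot\gamma_0(0)$ to a basic vector field $\xi$ along $L_p$; by Lemma \ref{firstapproach}, $F(\xi)\equiv 1$.

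By Lemma \ref{SubAnal-lemma1}, the image $\eta^{r_0}_{\xi}(L_p)$ lies in a single fibre of $\pi$, which must be $L_q$ since it contains $y_0=\eta^{r_0}_{\xi}(x_0)$. Lemma \ref{SubAnal-lemma5} then yields the surjectivity of $\eta^{r_0}_{\xi}\colon L_p\to L_q$, so some $x_1\in L_p$ satisfies $\eta^{r_0}_{\xi}(x_1)=y_1$. The geodesic segment $\gamma_{\xi_{x_1}}|_{[0,r_0]}$ joins $x_1\in L_p$ to $y_1$ and has $F$-length equal to $r_0$ (because $F(\xi)\equiv 1$), giving $d(L_p,y_1)\le r_0=d(L_p,y_0)$. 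Running the identical argument with the roles of $y_0$ and $y_1$ swapped yields the reverse inequality, hence $d(L_p,y_0)=d(L_p,y_1)$.

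For the second equality $d(y_0,L_p)=d(y_1,L_p)$, I would apply the whole argument above to the reversed Finsler metric $\tilde F(v):=F(-v)$ in place of $F$. All hypotheses of Theorem \ref{SubAnal} are symmetric (analyticity, completeness, connectedness of fibres, regularity of $\pi|_U$), and $\pi|_U$ is also a Finsler submersion for $\tilde F$ onto $B$ with the suitably reversed base metric, because the unit balls of $\tilde F$ are the inversions of those of $F$. Hence Lemmas \ref{SubAnal-lemma1}--\ref{SubAnal-lemma5} hold verbatim for $\tilde F$, and the identity $d^{\tilde F}(L_p,y_i)=d^{F}(y_i,L_p)$ translates the forward equality for $\tilde F$ into the required equality for $F$. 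The main technical point to scrutinise is the applicability of Lemma \ref{SubAnal-lemma5}: it requires $c=\pi(L_p)$ to be a regular value (ensured by Theorem \ref{SubAnal}(a) since $L_p$ contains a regular point) and relies on the connectedness of each fibre; the descent of the Finsler submersion hypothesis to $\tilde F$ is immediate from $d\pi_p(-v)=-d\pi_p(v)$.
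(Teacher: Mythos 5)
Your proof is correct and follows essentially the same route as the paper: take minimizing geodesics from $L_p$ to $y_0$ and $y_1$, extend the initial velocities to basic unit vector fields on $L_p$, use Lemma~\ref{SubAnal-lemma1} to see the endpoint map lands in $L_q$ and Lemma~\ref{SubAnal-lemma5} for surjectivity, then transport to get the two inequalities; the paper's terse ``the other equality is analogous'' is exactly the reversed-metric argument $\tilde F(v)=F(-v)$ that you spell out (and which the paper itself makes explicit in the proof of Lemma~\ref{SubAnal-lemma7}).
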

\begin{proof}
For $i=0,1$, let $\gamma_i:[0,r_{i}]\to M$ be a minimizing segment of geodesic joining $L_{p}$ with $y_i$, i.e.,
$\gamma_{i}(0)\in L_{p}$, $\gamma_{i}(r_{i})=y_{i}$ and 
$d(L_{p}, y_{i})=\int_{0}^{r_{i}}  F(\dot{\gamma}_{i}) dt =: \ell(\gamma_{i})$.
Since $\eta^{r_1}_{\xi_{1}}$ is surjective by Lemma \ref{SubAnal-lemma5}, we can   transport $\gamma_1$    and get a  (maybe non-unique)  new curve $|| \gamma_{1}$ joining $L_{p}$ to $y_{0}$
such that $\ell(|| \gamma_{1})=\ell(\gamma_{1})$. Since  $\gamma_{0}$ is minimal, we conclude that $\ell(\gamma_{0})\leq \ell(\gamma_{1})$. An analogous argument
allows us to conclude that  $\ell(\gamma_{1})\leq \ell(\gamma_{0})$ and hence,   $\ell(\gamma_{1})= \ell(\gamma_{0})$ as required,  which implies that  $d(L_{p}, y_{0})=d(L_{p},y_{1})$. The other equality is analogous. 
\end{proof}

\newcommand{\Uc}{\ensuremath{\mathcal{U}}}
\begin{lemma}
\label{SubAnal-lemma7}
 Given a point $\tilde q$ in a singular leaf $L_{\tilde q}$, there exists a plaque $P_{\tilde{q}}$ of  $\tilde q$
admitting a future (resp. past) tubular neighborhood 
such that
 if  $P_{q}$ is a (regular or singular) 
plaque in this neighborhood, 
then $P_{q}\subset \mathcal{C}_{r}^{+}(P_{\tilde{q}})$  (resp. $P_{q}\subset \mathcal{C}_{r}^{-}(P_{\tilde{q}})$)
for  an appropriate $r$. 
\end{lemma}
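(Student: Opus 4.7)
The plan is to fix a plaque $P_{\tilde q}$ of $\tilde q$ small enough to admit a past tubular neighborhood $\tilde{\mathcal O}(P_{\tilde q}, \varepsilon)$ (the future case is symmetric), and show that the function $y \mapsto d_F(y, P_{\tilde q})$ is constant on every plaque $P_q \subset \tilde{\mathcal O}(P_{\tilde q}, \varepsilon)$. Shrinking $\varepsilon$ if necessary, I can arrange that the minimizing geodesic from $y \in \tilde{\mathcal O}(P_{\tilde q}, \varepsilon)$ to $L_{\tilde q}$ lands inside $P_{\tilde q}$, so that $d_F(y, P_{\tilde q}) = d_F(y, L_{\tilde q})$ throughout the neighborhood; this reduces the task to proving equidistance with respect to the full fiber $L_{\tilde q}$ within plaque-sized regions. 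I would treat the regular and singular plaques separately.

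For a regular plaque $P_q \subset L_q$, I would argue in the spirit of Lemma \ref{SubAnal-lemma6}. Fix $y_0 \in P_q$ and let $\gamma_0$ be a minimizing geodesic from $y_0$ to its footpoint in $P_{\tilde q}$, of length $r_0$. Since $y_0$ is regular, Lemma \ref{pontoregular} provides a local Finsler submersion structure near $y_0$, and a short initial segment of $\gamma_0$ minimizes distance from $y_0$ to the nearby regular fibers it crosses; hence $\dot\gamma_0(0) \in \nu_{y_0}(L_q)$. I would then extend $\dot\gamma_0(0)/F(\dot\gamma_0(0))$ to a unit basic vector field $\xi$ along $L_q$ and invoke Lemma \ref{SubAnal-lemma1} to conclude $\eta^{r_0}_\xi(L_q) \subset L_{\tilde q}$. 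For $y_1 \in P_q$ close to $y_0$, continuity places $\eta^{r_0}_\xi(y_1) \in P_{\tilde q}$, and the unit-speed geodesic $\gamma_{\xi(y_1)}|_{[0, r_0]}$ provides a curve of length $r_0$ from $y_1$ to $P_{\tilde q}$, giving $d_F(y_1, P_{\tilde q}) \leq r_0$. Swapping the roles of $y_0$ and $y_1$ yields the reverse inequality, and connectedness of $P_q$ propagates the constancy globally.

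For a singular plaque $P_q$, I would use approximation. By the definition of a singular foliation recalled in Section \ref{section-preliminaries}, near each $y \in P_q$ there is a submersion $\tau: U \to B(0) \subset \RR^k$ whose fibers are contained in leaves, with generic fibers being regular plaques. Choosing a sequence of regular values $s_n \to 0$ produces regular plaques $P_n$ contained in $\tilde{\mathcal O}(P_{\tilde q}, \varepsilon)$ and accumulating on $P_q$. By the regular case, $d_F(\cdot, P_{\tilde q}) \equiv r_n$ on each $P_n$; picking approximating sequences $y_n \to y$ with $y_n \in P_n$ for any $y \in P_q$ and invoking continuity of the distance function forces $d_F(\cdot, P_{\tilde q})$ to be constant on $P_q$ as well.

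The principal obstacle is establishing the horizontality of $\dot\gamma_0(0)$ at the regular starting point. A priori, a minimizing geodesic from a regular point to a distant singular submanifold need not start in the normal cone to the local fiber; the delicate argument requires that short initial subsegments realize the distance to the (regular) fibers they sweep out, so that the local Finsler submersion property of Lemma \ref{pontoregular} forces horizontality at $y_0$. Once this is secured, the basic-vector-field transport via Lemma \ref{SubAnal-lemma1} settles the regular case, and the singular case follows by the continuity-and-approximation argument above.
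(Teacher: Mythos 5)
Your proposal departs from the paper's strategy and runs into the very obstacle you yourself flag: you cannot establish that the minimizing geodesic $\gamma_0$ from the regular point $y_0$ to the singular plaque $P_{\tilde q}$ starts in the normal cone $\nu_{y_0}(L_q)$. By first variation, minimality of $\gamma_0$ forces orthogonality to $P_{\tilde q}$ at the \emph{far} endpoint $\gamma_0(r_0)$, but it gives no information about $\dot\gamma_0(0)$. Your suggested repair --- that short initial subsegments of $\gamma_0$ realize the distance from $y_0$ to the regular fibers they cross, so that Lemma~\ref{pontoregular} forces horizontality --- does not hold a priori: a subsegment $\gamma_0|_{[0,t]}$ minimizes distance from $y_0$ to the single point $\gamma_0(t)$, not to the plaque (or leaf) through $\gamma_0(t)$, and deducing the latter is essentially equivalent to the equidistance you are trying to prove. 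Lemma~\ref{regularhorizontal} only upgrades horizontality from the starting point to the whole geodesic, so it cannot supply the missing initial horizontality, and Lemma~\ref{SubAnal-lemma8} is proved \emph{after} the present lemma and would be circular. Lemma~\ref{SubAnal-lemma6} compares distances between a regular leaf and a \emph{singular} leaf with $y_0,y_1$ on the singular side, so it cannot be invoked directly either.

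The paper sidesteps this difficulty entirely. It works with minimizing geodesics $\gamma_i$ from $P_{\tilde q}$ to $y_i$ (which are automatically orthogonal to $P_{\tilde q}$ at their origin), and then approximates $P_{\tilde q}$ by nearby \emph{regular} plaques $\tilde P_{p_m}$ (using that singular points along a horizontal geodesic are isolated, Lemma~\ref{lemma:V=L}). The minimizing segments $\gamma_i^m$ from $\tilde P_{p_m}$ to $y_i$ start orthogonal to a regular plaque, so the basic-vector-field transport of Lemma~\ref{SubAnal-lemma6} applies to give $\ell(\gamma_0^m)=\ell(\gamma_1^m)$; a convergence argument inside a totally convex neighborhood (together with $\ell(\beta_i^m)\to 0$, which uses Lemma~\ref{SubAnal-lemma6} again) then passes to the limit. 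In short, the paper approximates the singular \emph{target} plaque by regular ones, whereas your proposal approximates the singular \emph{source} plaque and needs a horizontality statement at regular points that is not available at this stage. A secondary, more minor point: in your singular-plaque step, the fibers $\tau^{-1}(s_n)$ of the local submersion have the dimension of the \emph{singular} leaf, so they are not plaques of the regular leaves they sit inside; you would have to argue that they lie inside regular plaques that are themselves contained in the tubular neighborhood before transferring the constant $r_n$.
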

\begin{proof}
 We will consider only the future case, as  the past one can be recovered by using the reverse Finsler metric $\tilde F(v)=F(-v)$. 
 Let $\Uc$ be a totally convex neighborhood of $\tilde q$, namely, given two points $p_1,p_2\in \Uc$ there exists a unique geodesic joining $p_1$ to
$p_2$ and this geodesic is minimizing (see \cite{W1,W2} for their existence). Now consider a plaque $\tilde P_{\tilde q}$ of $\tilde q$, a future tubular neighborhood
$\tilde U \subset \Uc$, a smaller plaque $P_{\tilde q}$ of $\tilde q$ and a future tubular neighborhood $U\subset \tilde U$ of $P_{\tilde q}$ such that if $D>0$ is the diameter of 
$ U$, then $B^+(p,2D)\cup B^-(p,2D)\subset \tilde U$ for every $p\in U$. 
Let $q\in U$, $P_q$ the plaque of $q\in U$, and $y_{0}, y_{1}\in P_q$. Our goal is to check that   $d(P_{\tilde{q}}, y_{0})=d(P_{\tilde{q}},y_{1})$.
This will imply $P_{q}\subset \mathcal{C}_{r}^{+}(P_{\tilde{q}})$. 

 For $i=0,1$, let $\gamma_i:[0,r_{i}]\to M$ be a minimal segment of geodesic joining 
$P_{\tilde{q}}$ with $y_i$, in particular $d(P_{\tilde{q}}, y_{i})=\ell(\gamma_{i})$, for $i=0, 1$.
 Choose an arbitrary regular point $p\in M$. Then there exists a minimizing geodesic from $L_p$ to $\tilde q$, whose singular points are isolated. 
 As a consequence, we can   
consider a sequence of regular points $p_{m}$  of the future tubular neighborhood $U$ 
converging to $\tilde{q}$. Moreover, consider a sequence of
minimal segments of geodesic $\beta_{i}^{m}$ joining  $\tilde P_{p_{m}}$  to $\gamma_{i}(0)$ (even if $\tilde P_{p_m}$ is not necessarily closed, the 
minimizing geodesic $\beta^m_i$ exists because $p_m\in \bar B^-(\gamma_i(0),D)\subset \tilde U$ by the choice of $U$). In particular,
 $d(\tilde P_{p_{m}}, \gamma_{i}(0))=\ell(\beta^{m}_{i})$.
Finally, consider   minimal segments of geodesic $\gamma_{i}^{m}:[0,r_m]\rightarrow M$ joining $\tilde P_{p_{m}}$ to $y_{i}$, which there exist by the same 
reason as the $\beta^m_i$'s. 
It follows that $\ell(\gamma_{i}^{m})=d(\tilde P_{p_{m}},y_{i})$ and $\ell(\gamma_{i}^{m})\leq \ell(\gamma_{i}\star \beta_{m}^{i})$,  where $\star$ stands for the concatenation of the two curves. 
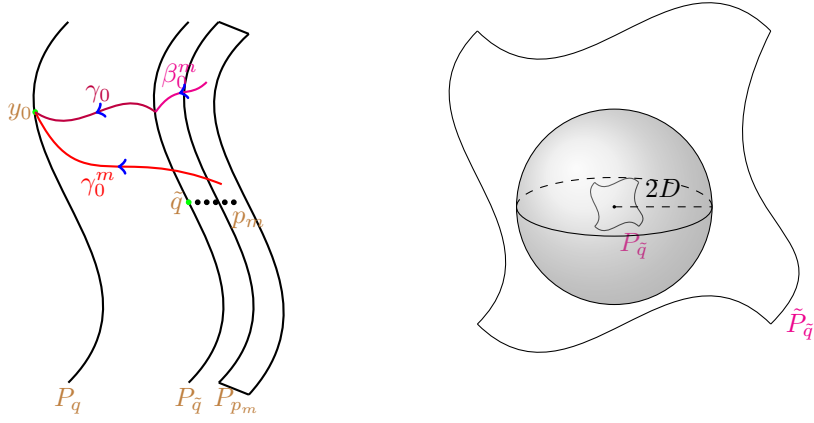
\begin{figure}
\begin{tikzpicture}[scale=0.8]
\begin{scope}[shift={(-3.5,0)}]
  \draw[thick] (0,0) .. controls (2,2) and (-2,4) .. (0,6);
  \draw[thick] (0,6) -- (0.5,5.8);
   \draw[thick] (0.5,-0.2) .. controls (2.5,2) and (-1.5,4) .. (0.5,5.8);
    \draw[thick] (0,0) -- (0.5,-0.2);
     \draw[thick] (-0.5,0) .. controls (1.5,2) and (-2.5,4) .. (-0.5,6);
      \draw[thick] (-2.5,0) .. controls (-0.5,2) and (-4.5,4) .. (-2.5,6);
       \node[brown] at  (-2.5,-0.3)   {$P_q$};
        \node[brown] at  (-0.5,-0.3)   {$P_{\tilde q}$};
         \node[brown] at  (0.3,-0.3)   {$P_{p_m}$};
         \filldraw[thick,green] (-0.5,3) circle(0.03);
          \node[brown] at  (-0.7,3)   {$\tilde q$};
           \filldraw[thick] (-0.35,3) circle(0.03);
           \filldraw[thick] (-0.2,3) circle(0.03);
           \filldraw[thick] (-0.05,3) circle(0.03);
           \filldraw[thick] (0.1,3) circle(0.03);
            \filldraw[thick] (0.25,3) circle(0.03);
              \node[brown] at  (0.5,2.7)   {$p_m$};
               \filldraw[thick,green] (-3.05,4.5) circle(0.03);
                \node[brown] at  (-3.3,4.5)  {$y_0$};    
                 \node[purple] at  (-2,4.8)   {$\gamma_0$};        
                 \node[red] at  (-2,3.3)   {$\gamma^m_0$};               
                  \node[magenta] at  (-0.65,5.1)   {$\beta^m_0$};

\begin{scope}[decoration={markings,
    mark=at position 0.5 with {\arrow[blue,very thick]{>}}}
              ]
 \draw[postaction={decorate},thick,red] (0.05,3.3)  .. controls (-1.7,4) and (-2.3,3)  .. (-3.05,4.5);
  \draw[postaction={decorate},thick,purple] (-1.05,4.5)  .. controls  (-1.7,5) and (-2.3,4) .. (-3.05,4.5);
   \draw[postaction={decorate},thick,magenta] (-0.2,5) .. controls (-0.5,4.7) and (-0.7,5) ..  (-1.05,4.5);
\end{scope}
\end{scope}
\end{tikzpicture}
\begin{tikzpicture}[scale=1.3]
\draw (0,0) .. controls (1,-1) and (2,1) .. (3,0); 
\draw (3,0) .. controls (4,1) and (2,1) .. (3,3);
\draw (0,0) .. controls (1,1) and (-1,2) .. (0,3);
\draw (0,3) .. controls (1,2) and (2,4) .. (3,3); 
\begin{scope}[shift={(1.2,1)},scale=0.15]
\draw (0,0) .. controls (1,-1) and (2,1) .. (3,0); 
\draw (3,0) .. controls (4,1) and (2,1) .. (3,3);
\draw (0,0) .. controls (1,1) and (-1,2) .. (0,3);
\draw (0,3) .. controls (1,2) and (2,4) .. (3,3); 
\end{scope}
 \node[magenta] at  (1.6,0.8)   {$P_{\tilde q}$};
\begin{scope}[shift={(1.4,1.2)},scale=0.5]
\shade[ball color = gray!40, opacity = 0.4] (0,0) circle (2cm);
  \draw (0,0) circle (2cm);
  \draw (-2,0) arc (180:360:2 and 0.6);
  \draw[dashed] (2,0) arc (0:180:2 and 0.6);
  \fill[fill=black] (0,0) circle (1pt);
  \draw[dashed] (0,0 ) -- node[above]{$2D$} (2,0);
\end{scope}
 \node[magenta] at  (3.3,0)   {$\tilde P_{\tilde q}$};
\end{tikzpicture}
\caption{\label{figure1} The diagram to the left represents the different curves used in the proof of Lemma \ref{SubAnal-lemma7}. The diagram to the right depicts both plaques $P_{\tilde q}$ and $\tilde P_{\tilde P}$. The minimizing curves inside the tubular neighborhood $U$ exist as short curves cannot reach the boundary, because of the hypothesis on the diameter.}
\end{figure}

 Observe that the image of the curves $\gamma_i^m$ is contained in $\bar B^-(y_i,D)$.  As a consequence, there exists a convergent subsequence
$\gamma_i^{n_k}(0)$ converging to some $\hat p\in \tilde P_{\tilde q}\subset \tilde U$ (recall that the plaques $\tilde P_{p_m}$ in $U$ must necessarily converge to $\tilde P_{\tilde q}$). As all the sequence and the limit lie in the convex neighborhood $\Uc$, 
we can guarantee that 
$\gamma_{i}^{n_k}$  converges to a segment of geodesic $\widehat{\gamma}_{i}$ joining $\tilde P_{\tilde{q}}$ to $y_{i}$. 
By Lemma \ref{SubAnal-lemma6},  $\ell(\beta_{i}^{m})\to 0$ when $m\to \infty$, and hence  we conclude that
$\ell(\hat{\gamma}_{i})\leq \ell(\gamma_{i})$.
Since $ d(\tilde P_{\tilde{q}}, y_{i}) =d(P_{\tilde{q}}, y_{i})=\ell(\gamma_{i})$, we infer that
$\ell(\hat{\gamma}_{i}) = \ell(\gamma_{i})$.

On the other hand,
 as the points $p_m$ are regular, one can  apply the same techniques of parallel transport of the proof of Lemma \ref{SubAnal-lemma6} to prove that \ $\ell(\gamma_{0}^{m})=\ell(\gamma_{1}^{m})$ and hence
$\ell(\hat{\gamma}_{1})=\ell(\hat{\gamma}_{0})$.
Therefore $\ell(\gamma_{0})=\ell(\gamma_{1})$ and this concludes the proof of the lemma.


\end{proof}

\begin{lemma}
	\label{SubAnal-lemma8}
	Let $x$ be a point of a singular leaf $L_{\tilde{q}}$. Then the geodesic $\gamma_{\xi_x}$ is horizontal, i.e., 
	orthogonal to the leaves of the singular foliation $\F=\{\pi^{-1}(c)\}$.
	
	\end{lemma}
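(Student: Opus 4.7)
The approach is to deduce orthogonality at points close to the singular leaf from the cylinder characterization of Lemma \ref{SubAnal-lemma7}, via the first variation formula for the distance to a submanifold, and then to propagate the conclusion to the whole geodesic by means of Lemma \ref{regularhorizontal}. Write $\xi=\xi_x\in\nu_x L_{\tilde q}$ for the unit initial vector and restrict attention to $t>0$; the case $t<0$ is handled analogously using past tubular neighborhoods.

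Fix a plaque $P_{\tilde q}$ of $x$ together with the future tubular neighborhood provided by Lemma \ref{SubAnal-lemma7}. Since $\xi\in\nu_x L_{\tilde q}$, the standard local minimization property of Finsler geodesics leaving orthogonally from a submanifold (see \cite{AlJav19}) ensures that, for $t>0$ small enough, $\gamma_\xi|_{[0,t]}$ is a minimizer from $P_{\tilde q}$ to $y:=\gamma_\xi(t)$, so $d_F(P_{\tilde q},y)=t$. By Lemma \ref{SubAnal-lemma7}, the plaque $P_y$ is contained in a cylinder $\mathcal{C}^{+}_{r}(P_{\tilde q})$; since $y\in P_y$, this forces $r=t$, and hence the function $f(\cdot):=d_F(P_{\tilde q},\cdot)$ is constantly equal to $t$ on $P_y$.

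Given any $u\in T_y L_y=T_y P_y$, the singular foliation hypothesis on $\F_\pi$ yields a smooth vector field $X$ defined on a neighborhood of $y$, tangent everywhere to the leaves, with $X_y=u$. Its integral curve $c(s)$ lies in $P_y$ for small $s$, hence $f(c(s))\equiv t$. The Finsler first variation formula for the distance to a submanifold, applied to the minimizing unit geodesic $\gamma_\xi|_{[0,t]}$, gives
\[
0 \;=\; \frac{d}{ds}\Big|_{s=0} f(c(s)) \;=\; g_{\dot\gamma_\xi(t)}\bigl(\dot\gamma_\xi(t),\,u\bigr),
\]
so $\dot\gamma_\xi(t)$ is $g_{\dot\gamma_\xi(t)}$-orthogonal to $L_y$ for every sufficiently small $t>0$.

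To extend this to all $t$, consider the set $S=\{t\geq 0 : \dot\gamma_\xi(t)\in\nu_{\gamma_\xi(t)} L_{\gamma_\xi(t)}\}$. It is closed by continuity and by the fact that $\F_\pi$ is a partition into submanifolds. It is open at any regular instant $t_0\in S$ via Lemma \ref{regularhorizontal} applied to the reparametrized geodesic $s\mapsto\gamma_\xi(s+t_0)$, and open at any singular instant $t_0\in S$ by repeating the local argument above with $x$ replaced by $\gamma_\xi(t_0)$ and $P_{\tilde q}$ by a plaque through $\gamma_\xi(t_0)$. Since $0\in S$ and the parameter interval is connected, we conclude $S=[0,\infty)$. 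The main technical point is the identification $d_F(P_{\tilde q},\gamma_\xi(t))=t$ for small $t$ together with the first variation formula in the present Finsler setting; both rest on the tubular neighborhood theory for submanifolds developed in \cite{AlJav19}.
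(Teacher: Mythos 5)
Your argument is correct and reaches the same conclusion, but it unpacks rather than invokes what the paper treats as a black box. The paper's proof of Lemma~\ref{SubAnal-lemma8} is a one-line deduction: Lemma~\ref{SubAnal-lemma7} shows the cylinder (local equidistance) condition at singular plaques, Lemma~\ref{pontoregular} already gives it at regular plaques, and Lemma~\ref{lemma-equidistant} converts local equidistance into the Finsler (transnormality) property, of which horizontality of $\gamma_{\xi_x}$ is an instance. You instead take Lemma~\ref{SubAnal-lemma7} as the sole input, re-derive the relevant implication ``equidistance $\Rightarrow$ orthogonality'' directly via the first variation of arc length on a family of $P_{\tilde q}$-minimizers, and then propagate the conclusion along $\gamma_\xi$ by an open/closed argument (Lemma~\ref{regularhorizontal} at regular instants, the local cylinder argument at singular ones). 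This is essentially a hands-on proof of the easy direction of Lemma~\ref{lemma-equidistant} specialized to the situation at hand. What the paper's route buys is brevity and no need to worry about smooth dependence of the minimizers on the variation parameter; what your route buys is self-containedness (you never appeal to \cite[Lemma~3.7]{Alexandrino-Alves-Javaloyes}) and a transparent mechanism for why orthogonality persists past singular instants. Two small points worth tightening in your write-up: (i) when you apply the first variation formula you should note explicitly that for $t$ below the focal radius of $P_{\tilde q}$ the $P_{\tilde q}$-minimizing geodesic to $c(s)$ depends smoothly on $s$ (this is where the tubular neighborhood theory from \cite{AlJav19} is really used), so that differentiating $d_F(P_{\tilde q},c(s))\equiv t$ is legitimate; and (ii) when verifying closedness of $S$ you should record, as you tacitly do, that the singular-foliation hypothesis supplies a smooth tangent vector field through any given tangent vector to a leaf, which is what makes the limiting $g$-orthogonality relation pass to the limit even though the leaf dimension may drop at $t_\infty$.
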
 
\begin{proof}
The proof follows directly from Lemmas \ref{SubAnal-lemma7}  and \ref{lemma-equidistant}.  


\end{proof}

%
\section{On Jacobi fields and curvature in Finsler Geometry}

\label{apendice}

\subsection{Anisotropic connection and curvature}
\label{apendice1}
In this section we review a few facts about anisotropic connections, see  \cite{Miguelanisotropic,J20}.

As the tensors associated with Finsler metrics depend on the direction $v\in TM\setminus \bf 0$, it is necessary a connection that take into account this fact, namely, it depends on the direction. In the following, we will denote by ${\mathfrak X}(M)$ the space of smooth vector fields on $M$ and by ${\mathcal F}(M)$, the subset of smooth real functions on $M$.
\begin{definition}\label{aniconnection}
	An  {\em anisotropic connection} is  a map
	\[\nabla: A\times \mathfrak{X}(M)\times\mathfrak{X}(M)\rightarrow TM,\quad\quad (v,X,Y)\mapsto\nabla^v_XY
	\in T_pM\setminus 0,\quad \text{with $v\in T_pM$, }\]
	where $A=TM\setminus \bf 0$, such that for any $X,Y\in  {\mathfrak X}(M)$,  the map $A\ni v\rightarrow \nabla^v_XY\in TM$ is smooth,
	and  for all $p\in M$ and $v\in T_pM\setminus \{0\}$,
	\begin{enumerate}[(i)]
		\item $\nabla^v_X(Y+Z)=\nabla^v_XY+\nabla^v_XZ$, for any $X,Y,Z\in  {\mathfrak X}(M)$,
		\item $\nabla^v_X(fY)=X_p(f) Y_{p}+f(p) \nabla^v_XY $ for any $f\in {\mathcal F}(M)$ and $X,Y\in  {\mathfrak X}(M)$,
		\item $\nabla^v_{fX+hY}Z=f(p)\nabla^v_XZ+h(p) \nabla^v_YZ$, for any $f,h\in {\mathcal F}(M)$ and $X,Y,Z\in  {\mathfrak X}(M)$.
	\end{enumerate}
\end{definition}
Observe that the properties $(ii)$ and $(iii)$ imply, respectively, that $\nabla^v_XY$ depends only on the value of $Y$ on a neighborhood of $p$, and on the value of $X_p$. So sometimes we will put only $X_p$ and it will make sense to compute $\nabla^v_XY$ in a chart.
If $(\Omega,\varphi=(x^1,\ldots,x^n))$ is a chart of $M$, with its associated partial vector fields denoted by $\partial_1,\ldots,\partial_n$, the {\em Christoffel symbols} of $\nabla$ are the functions $\Gamma^k_{\,\, ij}: T\Omega\setminus {\bf 0}\rightarrow \RR$, $i,j,k=1,\ldots,n$, determined by
\begin{equation}\label{chrissymbols}
 \nabla^v_{\partial_i}\partial_j=\sum_{k=1}^n\Gamma^k_{\,\, ij}(v) \partial_k,
 \end{equation}
for $i,j=1,\ldots,n$.
 Given a vector field $V$ without singularities  on an open set $\Omega\subset M$, the anisotropic connection $\nabla$ induces an affine connection $\nabla^V$ on $\Omega$ defined as $(\nabla^V_XY)_p=\nabla^{V_p}_{\tilde X}\tilde Y$ for any $X,Y\in {\mathfrak X}(\Omega)$, where $\tilde X,\tilde Y\in {\mathfrak X}(M)$ are extensions of $X$ and $Y$, respectively, namely, they coincide with $X$ and $Y$ in a neighborhood of $p\in\Omega$. Moreover, if we define the \emph{vertical derivative of $\nabla$} as the $(1,3)$- anisotropic tensor $P$ given by
 \[ P_v(X,Y,Z)=\frac{\partial}{\partial t}\left(\nabla^{v+tZ_{p}}_XY\right)|_{t=0},\]
 for $p\in M$, $v\in T_pM\setminus\{0\}$ and $X,Y,Z\in {\mathfrak X}(M)$, then it is possible to define the curvature tensor of the anisotropic connection as
 \begin{equation}\label{curten}
 R_v(X,Y)Z=(R^V)_p(X,Y)Z-(P_V)_p(Y,Z,\nabla^V_XV)+(P_V)_p(X,Z,\nabla^V_YV),
 \end{equation}
 where $V\in {\mathfrak X}(\Omega)$ is an extension of $v\in T_pM\setminus\bf 0$,  $X,Y,Z\in {\mathfrak X}(M)$, $P_V$ is the tensor such that $(P_V)_p=P_{V_p}$ and $R^V$ is the curvature tensor of the affine connection $\nabla^V$. It is important to observe that the curvature tensor $R_v$ does not depend on the vector fields $V,X,Y,Z$ chosen to make the computation, but only on $v,X_p,Y_p$ and $Z_p$, \cite[Prop. 2.5]{J20}. With an anisotropic connection at hand, we can also compute the tensor derivative of any anisotropic tensor (see \cite[\S 3]{Miguelanisotropic}), and this computation can be done using $\nabla^V$ (see \cite[Remark 15]{Miguelanisotropic}). In most of the references of Finsler Geometry, they use a connection along the vertical bundle of $TM$. For a relationship between both types of connections see \cite[\S 4.4]{Miguelanisotropic}.


\subsection{A few proofs about Finsler Jacobi fields}
\label{apendice2}

Here, for the sake of completeness,
 we present the proof of a few results about Jacobi fields that we have used. 


\subsubsection{Proof of Lemma \ref{lemma-W-selfadjoint}}\label{lema34}
%


Consider $J_{1}, J_{2}\in \selfadjointW$.   Using the shape operator $\mathcal{S}_{\dot{\gamma}(t_0)}$ (see Definition \ref{definition-eq-L-Jacobi}), it follows  that
\begin{eqnarray*}
g_{\dot{\gamma}(t_0)}(J_{1}'(t_{0}), J_{2}(t_{0}))  &= & g_{\dot{\gamma}(t_0)}(\mathcal{S}_{\dot{\gamma}(t_0)}(J_{1}(t_{0})), J_{2}(t_{0})) 
                                           \\ &= & g_{\dot{\gamma}(t_0)}(J_{1}(t_{0}), \mathcal{S}_{\dot{\gamma}(t_0)} (J_{2}(t_{0}))) 
																					 =  g_{\dot{\gamma}(t_0)}(J_{1}(t_{0}), J_{2}'(t_{0})),
\end{eqnarray*}
 using that $\mathcal{S}_{\dot{\gamma}(t_0)}$ is self-adjoint (see \cite[Prop. 3.5]{JavSoa15}). 
As we saw in Remark \ref{remark-selfadjoint-at-zero} this  implies that $\selfadjointW$ is self-adjoint for all $t$.

 In order to see that the dimension of $\selfadjointW$ is equal to $n-1$, observe first that the dimension of the $L$-Jacobi fields is $n$.  Recall that for every $t\in I$, if we define $\dot\gamma(t)^\perp:=\{ v\in T_{\dot\gamma(t)} M: g_{\dot\gamma(t)}(\dot\gamma(t),v)=0\}$, we have the splitting
\begin{equation}\label{splitgamma}
T_{\gamma(t)}M={\rm span}\{\dot\gamma(t)\}+\dot\gamma(t)^\perp,
\end{equation}
  and we will denote by ${\rm tan}_{\gamma}:{\mathfrak X}(\gamma)\rightarrow {\mathfrak X}(\gamma)$ and ${\rm nor}_{\gamma} :{\mathfrak X}(\gamma)\rightarrow {\mathfrak X}(\gamma)$ the first and second projection of the splitting \eqref{splitgamma} at
  every $t\in I$. Observe that  given an arbitrary Jacobi field $J$, then $J^T={\rm tan}_{\gamma}J$ and $J^\perp={\rm nor}_{\gamma} J$ are again Jacobi fields along $\gamma$ (see for example \cite[Lemma 3.17]{JavSoa15}). Moreover, as $J$ is $L$-Jacobi, then $J(t_0)$ is tangent to $L$ and therefore $g_{\dot\gamma(t_0)}$-orthogonal to $\dot\gamma(t_0)$. As a consequence, $J^T(t_0)=0$. Taking into account that a Jacobi field is tangent to $\gamma$ if and only if $J(t)=(a_1 t+a_2) \dot\gamma(t)$ (see \cite[Lemma 3.17 (i)]{JavSoa15}), we conclude that $J^T(t)=a_1 t\dot\gamma(t)$, which is an $L$-Jacobi field and then $J^\perp$ is also an $L$-Jacobi field, because $J^\perp=J-J^T$. Let ${\mathcal J}_L(\gamma)$  be the space of $L$-Jacobi fields along $\gamma$ and ${\mathcal J}_L^T(\gamma)=\{J\in {\mathcal J}_L(\gamma):J^\perp=0\}$, ${\mathcal J}_L^\perp(\gamma)=\{J\in {\mathcal J}_L(\gamma):J^T=0\}$. Therefore, we have a map 
\[ \phi:{\mathcal J}_L^T(\gamma)\times {\mathcal J}_L^\perp(\gamma)\rightarrow {\mathcal J}_L(\gamma), (J_1,J_2)\rightarrow J_1+J_2, \]
which is well-defined and one-to-one.  
  As we have seen above that ${\mathcal J}_L^T(\gamma)$ has dimension one, it follows that ${\mathcal J}_L^\perp(\gamma)$ has dimension $n-1$, which concludes. 


\subsubsection{Proof of Proposition \ref{proposition-jacobifield-variation}}\label{prop35}


 Observe that  the dimension of the submanifold $TL^\perp\subset TM$ of orthogonal vectors to $L$ is $n=\dim M$ (see for example \cite[Lemma 3.3]{JavSoa15}). Now consider a curve $(-\varepsilon,\varepsilon)\ni s\rightarrow N(s)\in TL^\perp$, with $\beta(s)=\rho(N(s))$ a curve in $L$ such that $N(0)=\dot\gamma(t_0)$ (where  $\rho:TM\rightarrow M$  is the natural projection). We can construct a variation $\Lambda(t,s)=\gamma_{N(s)}(t-t_0)$, $\Lambda:I\times (-\varepsilon,\varepsilon)\rightarrow M$, which is given by the $L$-orthogonal geodesics $\gamma_{N(s)}$ whose velocity at $0$ is $N(s)$. Observe that $\Lambda(t_0,s)=\beta(s)$, $\frac{\partial\Lambda}{\partial t}(t_0,s)=N(s)$, $\frac{\partial\Lambda}{\partial s}(t_0,s)=\dot\beta(s)$.  It is not difficult to show that $V(t)=\frac{\partial\Lambda}{\partial s}(t,0)$ is an $L$-Jacobi field along $\gamma$.   Indeed, it is Jacobi, because its longitudinal curves are geodesics (see \cite[Prop. 3.13]{JavSoa15}). In order to show that it is $L$-Jacobi, observe that $V(t_0)=\dot\beta(0)$ and if $(\Omega,\varphi)$ is a chart in a neighborhood of $\gamma(t_0)$, with $N^i,\beta^i,\gamma^i$, $i=1,\ldots,n$, the coordinates of $N$, $\beta$ and $\gamma$, and $\Gamma^i_{jk}:T\Omega\setminus 0\rightarrow \mathbb{R}$ the Christoffel symbols of the Chern connection in $(\Omega,\varphi)$, then
 \begin{equation}\label{coordinDgamma}
 V'(t_0)=D^{\dot\gamma}_{\gamma}V(t_0)=D^{\dot\gamma}_\beta N (0)= (\dot N^i(0)+\dot\beta^j(0) \dot\gamma^k(t_0) \Gamma^i_{jk}(\dot\gamma(t_0)))\partial_i,
 \end{equation} 
 (using \cite[Eq. (7)]{JavSoa15} in the second equality).
Therefore $V(t_0)$ is tangent to $L$ and the second equality of \eqref{coordinDgamma} implies that  ${\rm tan}^L_{\dot\gamma(t_0)} V'(t_0)=\mathcal{S}_{\dot{\gamma}(t_0)}(V(t_0))$, which concludes that $V$ is $L$-Jacobi. In order to see that all the $L$-Jacobi fields can be obtained with variations of $L$-orthogonal geodesics,
observe that the  formula \eqref{coordinDgamma} allows us to define a map  \[\rho:T_{\dot\gamma(t_0)}TL^\perp\rightarrow T_{\gamma(t_0)}M\times T_{\gamma(t_0)}M,\] 
defined as $\rho(\dot N(0))= (V(t_0),V'(t_0))$. It is straightforward to check, using \eqref{coordinDgamma}, that $\rho$ is injective.  Indeed, this follows from the expression of the coordinates of $\frac{dN}{ds}(0)$ in a natural chart of $TTM$ associated with $(\Omega,\varphi)$, which are $\gamma^1(0),\ldots, \gamma^n(0),$ $\dot\gamma^1(t_0),\ldots, \dot\gamma^n(t_0),\dot\beta^1(0),\ldots,\dot\beta^n(0),\dot N^1(0),\ldots, \dot N^n(0)$,  taking into account that, by definition, $N(0)=\dot\gamma(t_0)$ and $V(t_0)=\dot\beta(0)$. Moreover, as we have seen above, the image of $\rho$ is contained in the initial values for the Cauchy problem of $L$-Jacobi fields. As the dimension of $T_{\dot\gamma(t_0)}TL^\perp$ coincides with the dimension of the space of $L$-Jacobi fields (in both cases, the dimension is $n$), this easily implies that every $L$-Jacobi field can be realized as the variational vector field of a variation by $L$-orthogonal geodesics.


\subsubsection{Proof of Lemma \ref{lemma-variacao-jacobiholonomia} }\label{lema36}
%
 The fact that (b) implies (a) follows from  Proposition \ref{proposition-duran-geodesics-submersion}, because all the geodesics 
in the variation $\psi$ project into the same geodesic on $B$, and as a consequence $J(t)=\frac{\partial}{\partial s} \psi(t,0)$ is vertical for every $t\in I$. 
Let us check that (a) implies (b). First note that $J$
is  determined by its first initial condition $J(t_{0})$, because $J$ is vertical.   In fact 
assume by contradiction that there exists another vertical $L$-Jacobi field  $\tilde{J}$
so that $\tilde{J}(t_{0})=J(t_{0})$ and $J\neq \tilde{J}$. Set $\hat{J}:=J - \tilde{J}$. Hence $\hat{J}$ is a vertical
Jacobi field   with $\hat{J}(t_{0})=0$, and then $\hat{J}'(t_{0})=\lim_{t\rightarrow t_0} \frac{1}{t-t_0}\hat{J}(t)$, which implies
that $\hat{J}'(t_{0})$ is also tangent to $L$.  Being $\hat J$ $L$-Jacobi (recall 
 Def. \ref{definition-eq-L-Jacobi}), it follows that the tangent part to $L$ of  $\hat{J}'(t_{0})$ is 
zero, which concludes that $\hat{J}=0$, as both initial conditions are zero.  
Now consider   a curve $\beta:(-\varepsilon,\varepsilon)\rightarrow L$ such that $\dot\beta(0)=J(t_0)$ and the Jacobi field $\tilde{J}(t)= \frac{\partial}{\partial s} \psi(t,0)$, being $\psi$ the variation defined in part (b). The fact that (a) implies (b)   follows
from the above discussion since $J(t_0)=\tilde{J}(t_0)=\frac{\partial}{\partial s} \psi(t,0)$, and then $J=\tilde{J}$.




\end{document}